\newtheorem{theorem}{Theorem}[section]
\newtheorem{proposition}[theorem]{Proposition}
\newtheorem{corollary}[theorem]{Corollary}
\newtheorem{lemma}[theorem]{Lemma}
\newtheorem{question}[theorem]{Question}
\theoremstyle{definition}
\newtheorem{definition}[theorem]{Definition}
\newtheorem*{case2'}{Case 2$'$}
\newtheorem{theorem-named}{}
\newtheorem{theorem-labeled}{Theorem}
\newtheorem{definition-named}{}
\newtheorem{conjecture-named}{}
\newtheorem{case-named}{}
\numberwithin{equation}{section}
\newcommand{\tors}{{\rm Tors}}
\newcommand{\Z}{\mathbb{Z}}
\newcommand{\Q}{\mathbb{Q}}
\newcommand{\Qp}{\mathbb{Q}_{>0}}
\newcommand{\lp}{L(p,q)}
\newcommand{\qhom}{{\Theta}_{\Q}^{3}}
\newcommand{\rhom}{{\Theta}_{R}^{3}}
\newcommand{\lens}{\mathcal{L}}
\newcommand{\s}{\mathfrak{s}}
\def\Z{\mathbb{Z}}
\def\Q{\mathbb{Q}}
\newcommand{\Id}{\operatorname{Id}}
\newcommand{\rk}{\operatorname{rk}}
\DeclareMathOperator\Coker{Coker}
\begin{document}
\title{Rational cobordisms and integral homology}

\author{Paolo Aceto}
\address{Mathematical Institute University of Oxford, Oxford, United Kingdom}
\email{paoloaceto@gmail.com}
\urladdr{www.maths.ox.ac.uk/people/paolo.aceto}
\author{Daniele Celoria}
\address{Mathematical Institute University of Oxford, Oxford, United Kingdom}
\email{daniele.celoria@maths.ox.ac.uk}
\urladdr{www.maths.ox.ac.uk/people/daniele.celoria}
\author{JungHwan Park}
\address{School of Mathematics, Georgia Institute of Technology, Atlanta, GA, USA}
\email{junghwan.park@math.gatech.edu }
\urladdr{people.math.gatech.edu/~jpark929/}
\def\subjclassname{\textup{2010} Mathematics Subject Classification}
\expandafter\let\csname subjclassname@1991\endcsname=\subjclassname
\expandafter\let\csname subjclassname@2000\endcsname=\subjclassname
\subjclass{%
  57N13, 
  57M27, 
  57N70, 
  57M25
}

\keywords{Lens spaces, 3-dimensional homology cobordism groups}

\begin{abstract}
We consider the question of when a rational homology $3$-sphere is rational homology cobordant to a connected sum of lens spaces. We prove that every rational homology cobordism class in the subgroup generated by lens spaces is represented by a unique connected sum of lens spaces whose first homology group injects in the first homology group of any other element in the same class. As a first consequence, we show that several natural maps to the rational homology cobordism group have infinite rank cokernels. Further consequences include a divisibility condition between the determinants of a connected sum of $2$-bridge knots and any other knot in the same concordance class. Lastly, we use knot Floer homology combined with our main result to obstruct Dehn surgeries on knots from being rationally cobordant to lens spaces.
\end{abstract}

\maketitle

\section{Introduction}\label{sec:intro}
For any abelian group $R$, a smooth, closed, oriented, and connected $3$-manifold $Y$ such that $H_*(Y; R) \cong H_*(S^3; R)$ is called a \emph{$R$-homology sphere}. A smooth, compact, oriented, and connected $4$-manifold $X$ such that $H_*(X; R) \cong H_*(B^4; R)$ is called a \emph{$R$-homology ball}. The equivalence relation given by smooth $R$-homology cobordism on the set of $R$-homology spheres produces a group structure induced by connected sum on the equivalence classes. This group, denoted by $\rhom$, is the \emph{$3$-dimensional $R$-homology cobordism group}. Note that a $R$-homology sphere represents the trivial class if and only if it bounds an $R$-homology ball. We denote by $\lens$ the subgroup of $\qhom$ generated by lens spaces. 

Our goal is to give constraints for $\Q$-homology spheres to be contained in $\lens$. These constraints, together with results on the structure of $\lens$ from \cite{Lisca:2007-1,Lisca:2007-2}, lead to various consequences on the structure of $\qhom$ and its relation with $ \Theta_{\mathbb{Z}}^3$ and $ \Theta_{\mathbb{Z}_p}^3$ for $p$ prime.

\begin{theorem}\label{thm:main}
Any class in $\mathcal{L}$ contains a connected sum of lens spaces $L$ such that if $Y$ is $\Q$-homology cobordant to $L$, then there is an injection $$H_1(L; \mathbb{Z}) \hookrightarrow H_1(Y;\mathbb{Z}).$$ Moreover, as a connected sum of lens spaces $L$ is uniquely determined up to orientation preserving diffeomorphism.
\end{theorem}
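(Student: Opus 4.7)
My plan is in three steps: construct the canonical $L$ from Lisca's work on connected sums of lens spaces bounding rational balls, prove the homology injection via a metabolizer argument on the linking form of $Y \mathbin{\#} \overline{L}$, and then deduce uniqueness from these ingredients.

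\emph{Step 1 (constructing $L$).} Any class in $\mathcal{L}$ contains a connected sum of lens spaces. Using Lisca's classification of such sums that bound rational homology balls, iteratively split off any proper sub-sum that bounds a ball; the procedure terminates since $|H_1|$ strictly decreases. The resulting representative $L$, still a connected sum of lens spaces in the same $\Q$-cobordism class, satisfies: no proper nonempty sub-sum of $L$ bounds a rational homology ball. Call such $L$ \emph{reduced}.

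\emph{Step 2 (the injection).} Let $Y$ be $\Q$-cobordant to a reduced $L$, so $Y \mathbin{\#} \overline{L}$ bounds a rational homology ball $W$. The kernel
\[
G := \ker\!\bigl(H_1(\partial W;\Z) \to H_1(W;\Z)\bigr) \;\subset\; H_1(Y;\Z) \oplus H_1(L;\Z)
\]
is a metabolizer for $\lambda_Y \oplus (-\lambda_L)$, so $|G|^2 = |H_1(Y)| \cdot |H_1(L)|$, and an easy duality calculation shows $G_L := G \cap \bigl(0 \oplus H_1(L;\Z)\bigr)$ equals $\pi_L(G)^{\perp}$ inside $(H_1(L), \lambda_L)$. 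The key claim is $G_L = 0$. Given this, the projection $\pi_Y: G \to H_1(Y;\Z)$ is injective (kernel $= G_L = 0$), while non-degeneracy of $\lambda_L$ combined with $G_L = 0$ forces $\pi_L$ surjective. Therefore $H_1(L;\Z) \cong G/\bigl(G \cap (H_1(Y;\Z) \oplus 0)\bigr)$ is a quotient of the subgroup $\pi_Y(G) \subset H_1(Y;\Z)$, and since every subquotient of a finite abelian group is itself isomorphic to a subgroup, we conclude $H_1(L;\Z) \hookrightarrow H_1(Y;\Z)$. To establish $G_L = 0$, observe that it is an isotropic subgroup of $(H_1(L), \lambda_L)$; using Lisca's combinatorial criterion—which is sharper than mere metabolicity of the linking form—any nontrivial such subgroup can be shown to produce a proper sub-collection of the lens space summands of $L$ bounding a rational ball, contradicting reducedness.

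\emph{Step 3 (uniqueness, and the main obstacle).} If $L_1$ and $L_2$ are both reduced representatives of the same class, Step 2 applied in both directions gives mutual injections on first homology, so $H_1(L_1) \cong H_1(L_2)$ as finite abelian groups. Applying Lisca's classification to $L_1 \mathbin{\#} \overline{L_2}$ bounding a rational ball, with both sides reduced and of equal homology order, forces the lens space summands of $L_1$ and $L_2$ to pair up as $L(p,q)$ against $\overline{L(p,q)}$, yielding $L_1 \cong L_2$ as oriented $3$-manifolds. The principal obstacle is the claim $G_L = 0$ in Step 2: an isotropic subgroup of a direct sum of cyclic linking forms need not, on linking-form grounds alone, correspond to any sub-collection bounding a rational ball, so the argument cannot remain purely algebraic. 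It must leverage the geometry of the ball $W$ together with the additional combinatorial constraints in Lisca's classification in order to promote an abstract isotropic subgroup into a concrete lens space sub-sum bounding a ball.
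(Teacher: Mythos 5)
There are two genuine gaps, one of which makes your Step 2 conclusion outright false for the representative you construct.

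\textbf{Your notion of ``reduced'' is too weak.} Lisca's classification contains, besides sub-sums that bound rational balls, the relations $L(p,q)\csum L(n,n-1)\sim 0$ for $p/q\in F_n$, i.e.\ $L(p,q)$ is $\Q$-homology cobordant to $L(n,1)$ even though neither bounds a rational ball. Your reduction only splits off sub-sums that bound balls, so it never performs the replacement $L(p,q)\rightsquigarrow L(n,1)$. Concretely, $18/7\in F_2$, so $L(18,7)$ is $\Q$-homology cobordant to $L(2,1)$; the single lens space $L(18,7)$ is ``reduced'' in your sense, yet $\Z_{18}$ does not inject into $\Z_2$. Thus for your $L$ the asserted injection fails (and so does uniqueness: $L(18,7)$ and $L(2,1)$ are both reduced in your sense, lie in the same class, and are not diffeomorphic). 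The correct definition must in addition forbid any summand $L(p,q)$ with $p/q$ or $p/(p-q)$ in some $F_n$, and the reduction procedure must replace such a summand by $L(n,1)$ or $L(n,n-1)$; one then checks each replacement strictly decreases $|H_1|$.

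\textbf{The key claim $G_L=0$ is not proved, and the linking-form framework cannot prove it.} You correctly identify this as the principal obstacle, but the sketch you offer (``promote an isotropic subgroup into a lens space sub-sum bounding a ball'') is not how the difficulty is resolved, and as the example above shows the claim is false without the stronger reducedness hypothesis. The paper's argument abandons linking forms entirely: it caps $L$ and $-L$ with their canonical negative definite plumbings $P$ and $P^*$, applies Donaldson's diagonalization theorem to the closed manifold $P\cup_L W\cup_Y -W\cup_{-L}P^*$, and then proves a purely lattice-theoretic statement --- for reduced $L$, any embedding of the lattice of $P\sqcup P^*$ into $(\Z^N,-\Id)$ decomposes into irreducible pieces built from $\{e_1+e_2,e_1-e_2\}$ by $-2$-final expansions, and for such pieces the sublattices coming from $P$ and $P^*$ are exactly each other's orthogonal complements. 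This forces $(H_2(P\cup_L W),Q)\cong(H_2(P),Q_P)$, and the injection $H_1(L)\hookrightarrow H_1(Y)$ then follows from the long exact sequence of the pair $(P\cup_L W,\,Y)$ in the style of Owens--Strle, since $Q_P$ presents $H_1(L)$ while the same matrix presents a quotient of $H_1(Y)$ by a subgroup. If you want to salvage your outline, you would need to import this Donaldson-plus-lattice input; the metabolizer of the linking form sees strictly less information and cannot distinguish, say, $L(18,7)$ from $L(2,1)\csum(\text{something metabolic})$.
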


%

We now provide a quick overview of
the general strategy for the proof. First, given any class in $\lens$, we give an algorithm to find the unique representative $L$ identified in Theorem~\ref{thm:main} (see Definition~\ref{def:reduced} and Theorem~\ref{thm:main'}). Both $L$ and $-L$ bound canonical negative definite plumbed 4-manifolds $P$ and $P^*$. If $Y$ is $\Q$-homology cobordant to $L$ via a cobordism $W$, then the union
$$P \cup_L W \cup_Y -W \cup_{-L} \cup P^*$$
is a smooth closed 4-manifold with negative definite intersection form to which Donaldson's theorem applies~\cite{Donaldson:1987-1}. By using Lisca's work~\cite{Lisca:2007-1, Lisca:2007-2}, we show that the embedding of the integral lattice  $(H_2(P;\mathbb{Z}),Q_P) \oplus (H_2(P^*;\mathbb{Z}),Q_{P^*})$ to the integral lattice with the standard negative definite form is unique, and the two summands are each other's respective orthogonal complement (see Proposition~\ref{prop:isomorphic}). The result that $H_1(L; \mathbb{Z}) \hookrightarrow H_1(Y;\mathbb{Z})$ then follows easily.


Note that $\Z$-homology spheres and $\Z_p$-homology spheres are $\Q$-homology spheres, and $\Z$-homology balls and $\Z_p$-homology balls are also $\Q$-homology balls. Hence there are natural maps $\psi \colon \Theta_\mathbb{Z}^3 \rightarrow \Theta_\mathbb{Q}^3$ and $\psi_p \colon \Theta_{\mathbb{Z}_p}^3 \rightarrow \Theta_\mathbb{Q}^3$. It is an interesting problem to understand the properties of these maps. It was first shown by S.\ Kim and Livingston \cite{Kim-Livingston:2014-1} following the work of Hedden, Livingston, and Ruberman \cite{Hedden-Livingston-Ruberman:2012-1} that $\Coker \psi$ contains a subgroup isomorphic to $\mathbb{Z}^\infty \oplus \mathbb{Z}_2^\infty$ (see also \cite{Hedden-Kim-Livingston:2016-1,Golla-Larson:2018-1}). In fact, examples from \cite{Hedden-Livingston-Ruberman:2012-1, Kim-Livingston:2014-1, Hedden-Kim-Livingston:2016-1} bound topological $\Q$-homology balls. The first author and Larson in \cite{Aceto-Larson:2017-1} showed that the intersection of the image of $\psi$ and $\mathcal{L}$ is trivial, which immediately implies that $\Coker \psi$ contains a subgroup isomorphic to $\mathbb{Z}^\infty \oplus \mathbb{Z}_2^\infty$. We recover this fact as an easy corollary of Theorem~\ref{thm:main}.

\begin{corollary}[\cite{Aceto-Larson:2017-1}]\label{cor:AK} Let $\psi \colon \Theta_\mathbb{Z}^3 \rightarrow \Theta_\mathbb{Q}^3$ be the homomorphism induced by the inclusion. Then $$\psi(\Theta_\mathbb{Z}^3) \cap \mathcal{L} = 0.$$ In particular, $\Coker \psi$ contains a subgroup isomorphic to $\mathbb{Z}^\infty \oplus \mathbb{Z}_2^\infty$.
\end{corollary}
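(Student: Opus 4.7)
The plan is to derive both statements directly from Theorem~\ref{thm:main}, with no extra input beyond Lisca's classification of $\mathcal{L}$. First I would unpack what it means for a class to lie in $\psi(\Theta_\Z^3)\cap \mathcal{L}$: such a class admits a $\Z$-homology sphere representative $Y$ and also a connected sum of lens spaces representative, so Theorem~\ref{thm:main} produces a distinguished connected sum of lens spaces $L$ in the class with $H_1(L;\Z)\hookrightarrow H_1(Y;\Z)$. Since $H_1(Y;\Z)=0$, this forces $H_1(L;\Z)=0$. A connected sum of lens spaces has trivial first homology only when every nontrivial summand is absent (each lens space $L(p,q)$ with $p>1$ has $H_1=\Z/p$), so $L=S^3$, whence the class is trivial in $\Theta_\Q^3$. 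This gives $\psi(\Theta_\Z^3)\cap \mathcal{L}=0$.

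For the final sentence, the plan is to compose the inclusion $\mathcal{L}\hookrightarrow \Theta_\Q^3$ with the projection $\Theta_\Q^3\twoheadrightarrow \Coker\psi$. By the first part, the kernel of this composition is $\psi(\Theta_\Z^3)\cap \mathcal{L}=0$, so $\mathcal{L}$ embeds into $\Coker\psi$. I would then cite Lisca's classification \cite{Lisca:2007-1,Lisca:2007-2} of which connected sums of lens spaces bound rational homology balls, which exhibits in $\mathcal{L}$ an infinite-rank free abelian summand together with infinitely many independent elements of order two (for instance, $L(p,1)$ for varying primes $p$ yield infinite-order elements, while suitable lens spaces $L(n^2,n-1)$ or pairs $L(p,q)\#L(p,q)$ yield $2$-torsion). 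This directly implants a subgroup isomorphic to $\Z^\infty\oplus\Z_2^\infty$ into $\Coker\psi$. The only step with any subtlety is the first paragraph's appeal to Theorem~\ref{thm:main}, and even that reduces to recalling that the lens space summands of $L$ contribute nontrivially to $H_1$; the passage to the cokernel is then essentially formal.
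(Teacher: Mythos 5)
Your argument is correct and follows essentially the same route as the paper: apply Theorem~\ref{thm:main} to a class with an integral homology sphere representative to force $L=S^3$, then observe that $\mathcal{L}$ therefore injects into $\Coker\psi$ and invoke the structure $\mathcal{L}\cong\Z^\infty\oplus\Z_2^\infty$ (Proposition~\ref{prop:lensspacegp}, derived from Lisca). One minor caveat: your parenthetical examples of $2$-torsion are off --- $L(n^2,n-1)$ is generally not amphichiral, and $L(p,q)\# L(p,q)$ has order two precisely when $q^2\equiv-1\pmod p$ --- but this is immaterial since the cited classification is what actually supplies the $\Z_2^\infty$ summand.
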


We also have a characterization of the intersection of the image of $\psi_p$ and $\mathcal{L}$ for any prime $p$. Note that the subgroup generated by lens spaces $L(r,s)$ with $\gcd(r,p)=1$ is contained in the intersection of $\psi_p(\Theta_{\mathbb{Z}_p}^3)$ and $\mathcal{L}$. 

\begin{corollary}\label{cor:Zp} For any prime $p$, let $\psi_p \colon \Theta_{\mathbb{Z}_p}^3 \rightarrow \Theta_\mathbb{Q}^3$ be the homomorphism induced by the inclusion. Then $$\psi_p(\Theta_{\mathbb{Z}_p}^3) \cap \mathcal{L} = \langle\{ L(r,s) \mid \gcd(r,p)=1 \}\rangle.$$ 
As a consequence, $\Coker \psi_p$ contains a subgroup isomorphic to $\mathbb{Z}^\infty$ if $p \equiv 3 \pmod{4}$ and $\mathbb{Z}^\infty \oplus \mathbb{Z}_2^\infty$ otherwise.\end{corollary}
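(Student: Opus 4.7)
The first equality is a direct corollary of Theorem~\ref{thm:main}. One containment is immediate: any lens space $L(r,s)$ with $\gcd(r,p)=1$ has $H_1(L(r,s);\Z_p) = \Z/r\otimes_\Z \Z_p = 0$, so $L(r,s)$ is itself a $\Z_p$-homology sphere and represents a class in $\psi_p(\Theta_{\Z_p}^3)$. For the reverse containment, given $[Y]\in\psi_p(\Theta_{\Z_p}^3)\cap\mathcal{L}$, pick a $\Z_p$-homology sphere $Y$ representing $[Y]$, so $|H_1(Y;\Z)|$ is coprime to $p$. Theorem~\ref{thm:main} provides a representative $L = \#_i L(r_i,s_i)$ of $[Y]$ together with an injection $\bigoplus_i \Z/r_i = H_1(L;\Z) \hookrightarrow H_1(Y;\Z)$; since the target has order coprime to $p$, so does each $r_i$, placing $[Y] = [L]$ in $\langle\{L(r,s)\mid\gcd(r,p)=1\}\rangle$.

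For the cokernel consequence, the composition $\mathcal{L}\hookrightarrow\Theta_\Q^3\twoheadrightarrow\Coker\,\psi_p$ has kernel exactly $\psi_p(\Theta_{\Z_p}^3)\cap\mathcal{L}$, inducing an injection
\[
\mathcal{L}/\langle\{L(r,s)\mid\gcd(r,p)=1\}\rangle \hookrightarrow \Coker\,\psi_p.
\]
It therefore suffices to exhibit the claimed subgroups inside the left-hand quotient. To produce a $\Z^\infty$ summand I would take an infinite family of lens spaces $L(r_n,s_n)$ with $p\mid r_n$ and use Theorem~\ref{thm:main} to verify linear independence modulo lens spaces with $r$ coprime to $p$: the uniqueness of reduced forms shows that any such relation would have to reduce to a connected sum with every summand coprime to $p$, contradicting the presence of $p$-divisible summands on the other side. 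For the additional $\Z_2^\infty$ summand when $p=2$ or $p\equiv 1\pmod 4$, I would exhibit infinitely many $2$-torsion classes using lens spaces $L(r,s)$ with $p\mid r$ that admit an orientation-reversing self-diffeomorphism (equivalently $s^2\equiv -1\pmod r$); the residue condition on $p$ is precisely what guarantees existence of enough such $r$, while in the case $p\equiv 3\pmod 4$ no such $2$-torsion is available from lens spaces with $p\mid r$, accounting for the dichotomy in the statement.

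The main obstacle is the cokernel statement: the intersection formula reduces cleanly to Theorem~\ref{thm:main}, but producing the explicit infinite families requires combining Theorem~\ref{thm:main} with Lisca's classification in \cite{Lisca:2007-1, Lisca:2007-2} to ensure linear independence in the quotient $\mathcal{L}/\langle\{L(r,s)\mid\gcd(r,p)=1\}\rangle$ rather than merely in $\mathcal{L}$.
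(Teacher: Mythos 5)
Your proof of the equality $\psi_p(\Theta_{\mathbb{Z}_p}^3) \cap \mathcal{L} = \langle\{ L(r,s) \mid \gcd(r,p)=1 \}\rangle$ is complete and is exactly the paper's argument: one inclusion because such lens spaces are themselves $\Z_p$-homology spheres, the other by applying Theorem~\ref{thm:main} to a $\Z_p$-homology sphere representative and noting that $|H_1(L;\Z)|$ must then be coprime to $p$. Your reduction of the cokernel claim to computing $\mathcal{L}/\langle\{L(r,s)\mid\gcd(r,p)=1\}\rangle$ is also the paper's route (its Proposition~\ref{prop:lensspacegpmodulo}), and your identification of chiral versus amphichiral reduced lens spaces with $p\mid r$ as the source of the $\Z$ and $\Z_2$ summands, with the quadratic-residue condition $s^2\equiv-1\pmod{r}$ forcing the dichotomy mod $4$, is the right picture.

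However, the second half is a plan rather than a proof, and the step you defer is genuinely the hardest one. Existence of infinitely many amphichiral lens spaces $L(r,s)$ with $p\mid r$ is easy (take $r=s^2+1$ with $s\equiv k\pmod p$, $k^2\equiv-1\pmod p$), but for these to be nonzero and independent in the quotient they must be \emph{reduced} in the sense of Definition~\ref{def:reduced}, and that is where the work lies. The paper takes the explicit family $r_m=(2mp+k)^2+1$, $s_m=2mp+k$ (and $r_m=(8m+3)^2+1$ for $p=2$) and verifies reducedness: condition $(1)$ via the Casson--Gordon square-order obstruction since $r_m$ is not a perfect square; condition $(3)$ by ruling out $r_m/s_m\in F_n$, which requires separate arguments for $n\neq 2,4$ (infinite order of $L(n,1)$ in $\qhom$), $n=2$ (parity of $r_m$), and $n=4$ ($L(4,1)$ bounds a rational ball). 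None of this is automatic from the residue condition alone. A second, smaller point you elide: to conclude that reduced lens spaces with $p\mid r$ survive in the quotient, you need that the subgroup $\langle\{L(r,s)\mid\gcd(r,p)=1\}\rangle$ is spanned by \emph{reduced} lens spaces with $r$ coprime to $p$, i.e.\ that the reduction moves of Proposition~\ref{prop:reduced} (replacing $L(r,s)$ with $r/s\in F_n$, $r=m^2n$, by $L(n,1)$) preserve coprimality with $p$; this is true but should be said. With those two items supplied, your argument coincides with the paper's.
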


We remark that on a similar note S.\ Kim and Livingston \cite{Kim-Livingston:2014-1} showed that the cokernel of $\Phi \colon \bigoplus  \Theta_{\mathbb{Z}[\frac{1}{p}]}^3\rightarrow \Theta_\mathbb{Q}^3$ contains a subgroup isomorphic to $\mathbb{Z}^\infty \oplus \mathbb{Z}_2^\infty$.

Let $\mathcal{C}$ be the smooth knot concordance group. For any prime $p$ and positive integer $r$, the $p^r$-fold cyclic branched cover of a knot $K$, denoted by $\Sigma_{p^r}(K)$, is a $\Z_p$-homology sphere. If $K$ is smoothly slice then $\Sigma_{p^r}(K)$ bounds a $\Z_p$-homology ball~\cite{Casson-Gordon:1978-1, Casson-Gordon:1986-1}. Moreover, it is easy to see that the $p^r$-fold cyclic branched cover of the connected sum of two knots is the same as the connected sum of their $p^r$-fold cyclic branched covers. Hence we get a homomorphism $\beta_{p^r}\colon \mathcal{C} \rightarrow \Theta_\mathbb{Q}^3$ defined by taking the $p^r$-fold cyclic branched cover. In fact, $\beta_{p^r}$ factors through $\psi_p$ as follows. 

$$\begin{tikzcd}
\mathcal{C}\arrow[rr, "\beta_{p^r}"] \arrow[dr]& & \Theta_\mathbb{Q}^3 \\
& \Theta_{\Z_p}^3 \arrow[ur, "\psi_p"]&
\end{tikzcd}$$
Since the image of $\beta_{p^r}$ is contained in the image of $\psi_p$, we get the following immediate corollary. Note that the kernel of $\beta_{p^r}$ was also studied in \cite{Aceto-Larson:2017-1} (see also \cite{Casson-Harer:1981-1}).

\begin{corollary}\label{cor:branch} For any prime $p$ and positive integer $r$, $\Coker \beta_{p^r}$ contains a subgroup isomorphic to $\mathbb{Z}^\infty$ if $p \equiv 3 \pmod{4}$ and $\mathbb{Z}^\infty \oplus \mathbb{Z}_2^\infty$ otherwise. \qed
\end{corollary}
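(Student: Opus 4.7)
The plan is to exploit the factorization of $\beta_{p^r}$ through $\psi_p$ recorded in the diagram above. Because $\Im \beta_{p^r} \subseteq \Im \psi_p$, this factorization induces a canonical surjection
$$\pi \colon \Coker \beta_{p^r} \twoheadrightarrow \Coker \psi_p,$$
and the task reduces to lifting the subgroups furnished by Corollary~\ref{cor:Zp} along $\pi$.

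For the $\mathbb{Z}^\infty$ summand the argument is purely formal: since $\mathbb{Z}^\infty$ is free abelian, the inclusion $\mathbb{Z}^\infty \hookrightarrow \Coker \psi_p$ lifts through the surjection $\pi$ to a homomorphism $\mathbb{Z}^\infty \to \Coker \beta_{p^r}$, and this lift is automatically injective since post-composing with $\pi$ recovers the original injection. This already settles the case $p \equiv 3 \pmod 4$.

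For the additional $\mathbb{Z}_2^\infty$ summand when $p \not\equiv 3 \pmod 4$, freeness is lost and a direct argument with explicit $2$-torsion elements is needed. The proof of Corollary~\ref{cor:Zp} produces a sequence of lens spaces $L_i \in \mathcal{L}$, each of order $2$ in $\Theta_{\mathbb{Q}}^3$, whose classes are $\mathbb{Z}_2$-linearly independent in $\Coker \psi_p$. Their images $[L_i] \in \Coker \beta_{p^r}$ again satisfy $2[L_i] = 0$, so it is enough to verify that they remain linearly independent modulo~$2$. A nontrivial relation $\sum_{i \in I}[L_i]=0$ in $\Coker \beta_{p^r}$ would mean $\sum_{i\in I} L_i \in \beta_{p^r}(\mathcal{C}) \subseteq \psi_p(\Theta_{\mathbb{Z}_p}^3)$, and applying $\pi$ would contradict their independence in $\Coker \psi_p$. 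Hence $\langle [L_i]\rangle \cong \mathbb{Z}_2^\infty$ injects into $\Coker \beta_{p^r}$.

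The only mild asymmetry, and the closest thing to an obstacle, is that the free part follows from an abstract lifting principle while the torsion part must be extracted by tracking the explicit lens-space generators produced in the proof of Corollary~\ref{cor:Zp}. Both pieces, however, follow immediately once the containment $\Im \beta_{p^r} \subseteq \Im \psi_p$ is recorded, which is consistent with the \emph{qed} the authors attach to the statement.
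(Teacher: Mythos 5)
Your argument is correct and is essentially the paper's: the authors deduce the corollary immediately from the containment $\Im \beta_{p^r} \subseteq \Im \psi_p$ (hence the surjection $\Coker \beta_{p^r} \twoheadrightarrow \Coker \psi_p$) together with Corollary~\ref{cor:Zp}, which is exactly the lifting you carry out. The one detail you leave implicit --- that the lifted copy of $\Z^\infty$ and the subgroup generated by the $[L_i]$ intersect trivially, so their sum is direct --- is automatic, since the former is torsion-free and the latter is torsion.
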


Note that by considering the linking form of $\Q$-homology spheres we get a homomorphism $\qhom \rightarrow W(\Q/\Z)$, where $W(\Q/\Z)$ is the Witt group of
nonsingular $\Q/\Z$-valued linking forms on finite abelian groups. Let $\mathcal{K}$ be the kernel of this map. Conjecturally every element in $\mathcal{K}$ bounds a topological $\Q$-homology ball (see \cite{Kim-Livingston:2014-1}). It is well known that $W(\Q/\Z)$ is isomorphic to $\Z_2^\infty \oplus \Z_4^\infty$. In particular, each infinite rank subgroup we found in Corollaries~\ref{cor:AK}, \ref{cor:Zp}, \ref{cor:branch} can be taken to be a subgroup of $\mathcal{K}$.\\

In~\cite{Kim-Livingston:2014-1}, it is shown that for any square-free and relatively prime positive integers $p$ and $q$, there is no $\Q$-homology cobordism from
$L(pq, 1)$ to a connected sum $Y_1 \# Y_2$, where $H_1(Y_1;\Z) = \Z_p$ and $H_1(Y_2;\Z) = \Z_q$. Using Theorem~\ref{thm:main}, we can show that the same conclusion holds with a different assumption (see Proposition~\ref{prop:nonsplit} for a more general statement).
\begin{corollary}\label{cor:nonsplitta}
 If $\gcd(a,b) >1$ and $ab \neq 4$, the lens space $L(ab,1)$ is not $\Q$-homology cobordant to a connected sum $Y_a \# Y_b$, where $|H_1(Y_a;\Z)| = a$ and $|H_1(Y_b;\Z)| = b$.
\end{corollary}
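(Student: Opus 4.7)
The plan is to apply Theorem~\ref{thm:main} twice—once to $L(ab,1)$ itself and once to the hypothetical $Y_a\#Y_b$—and to exploit the tension between the cyclic structure of $H_1(L(ab,1);\Z)$ and the direct sum structure of $H_1(Y_a\#Y_b;\Z)$.

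More concretely, I would argue by contradiction: suppose $L(ab,1)$ is $\Q$-homology cobordant to $Y_a\#Y_b$, and let $L$ denote the unique reduced representative of the class $[L(ab,1)]\in\mathcal{L}$ furnished by Theorem~\ref{thm:main}. Applying the injection property with $Y=L(ab,1)$ yields
\[
H_1(L;\Z)\hookrightarrow H_1(L(ab,1);\Z)=\Z/ab,
\]
so $H_1(L;\Z)$ is cyclic of order some divisor $d$ of $ab$.

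The main step is to show $d=ab$, i.e.\ that the reduced form does not shrink the first homology. The hypothesis $ab\neq 4$ excludes the single trivial case $L=S^3$: by Lisca's classification~\cite{Lisca:2007-1}, $L(4,1)$ is the unique lens space of the form $L(n,1)$ bounding a $\Q$-homology ball. To rule out intermediate collapses $1<d<ab$, I would appeal to Lisca's classification of connected sums of lens spaces bounding $\Q$-homology balls~\cite{Lisca:2007-2}, checking that no $\Q$-homology cobordism from $L(ab,1)$ to a connected sum of lens spaces with strictly smaller first homology exists when $\gcd(a,b)>1$; this verification is the content of the more general Proposition~\ref{prop:nonsplit}. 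Granting it, $H_1(L;\Z)\cong\Z/ab$.

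The concluding step applies Theorem~\ref{thm:main} again, this time with $Y=Y_a\#Y_b$, producing
\[
\Z/ab\cong H_1(L;\Z)\hookrightarrow H_1(Y_a;\Z)\oplus H_1(Y_b;\Z).
\]
Such an injection forces an element of order $ab$ in the right-hand side. However, by Lagrange the exponent of $H_1(Y_i;\Z)$ divides $|H_1(Y_i;\Z)|$ for $i=a,b$, so the exponent of the direct sum divides $\mathrm{lcm}(a,b)=ab/\gcd(a,b)$. Since $\gcd(a,b)>1$, this is strictly less than $ab$, giving the required contradiction. The principal obstacle is the middle step of pinning down $|H_1(L;\Z)|$; once this is established, the rest of the argument is a purely formal consequence of Theorem~\ref{thm:main} and Lagrange's theorem.
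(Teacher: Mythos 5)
Your overall strategy is the paper's: apply Theorem~\ref{thm:main} to the class of $L(ab,1)$ and derive a contradiction from the injection $\Z_{ab}\hookrightarrow H_1(Y_a;\Z)\oplus H_1(Y_b;\Z)$ via the exponent/Lagrange argument (which, as you note, also covers non-cyclic $H_1(Y_a)$ and $H_1(Y_b)$; the paper states Proposition~\ref{prop:nonsplit} for cyclic groups, but your version is what the corollary as stated actually needs). The gap is exactly where you flag it: you never establish that the distinguished representative $L$ has $H_1(L;\Z)\cong\Z_{ab}$. Your proposed fix --- ``checking that no $\Q$-homology cobordism from $L(ab,1)$ to a connected sum of lens spaces with strictly smaller first homology exists,'' attributed to Proposition~\ref{prop:nonsplit} --- is both unexecuted and mis-directed: Proposition~\ref{prop:nonsplit} is the general form of the very non-splittability statement you are proving, not a statement about homology collapse among connected sums of lens spaces, so citing it for the middle step is circular. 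Note also that the hypothesis $\gcd(a,b)>1$ has no bearing on that middle step, which suggests the two roles of the hypotheses have been conflated.

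The missing ingredient is already in the paper and is much simpler than what you propose. By Proposition~\ref{prop:ln1}, $L(m,1)$ is reduced if and only if $m\neq 4$ (the only nontrivial check is that $m\in\mathcal{R}$ forces $m=4$, and $m,\,m/(m-1)$ never lie in any $F_n$). By Proposition~\ref{prop:reduced} the reduced representative of a class in $\mathcal{L}$ is unique, and it is precisely the $L$ of Theorem~\ref{thm:main} (equivalently, Theorem~\ref{thm:main'} applies to it directly). Hence for $ab\neq 4$ the representative is $L=L(ab,1)$ itself, so $H_1(L;\Z)\cong\Z_{ab}$ with nothing further to verify; your worry about intermediate collapses $1<d<ab$ does not arise, since any such collapse would contradict $L(ab,1)$ being reduced (indeed Proposition~\ref{prop:reduced} says every non-reduced connected sum of lens spaces in the class has strictly \emph{larger} $|H_1|$). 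With that substitution your argument closes and coincides with the paper's proof of Proposition~\ref{prop:nonsplit}.
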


More generally, we can consider a filtration of $\qhom$ and study the quotient of each stage. The proof of the next result follows almost directly from Theorem~\ref{thm:main} and \cite{Lisca:2007-2}.

\begin{corollary}\label{cor:filtration}Let $\mathcal{O}_n$ be the subgroup of $\qhom$ defined as follows\textup{:}
$$\mathcal{O}_n:=\langle\{ Y \mid H_1(Y;\mathbb{Z}) \text{ does not have any element of order } p^{n+1} \text{ for each prime } p \}\rangle.$$
Then $\mathcal{O}_{n+1} / \mathcal{O}_{n}$ contains a subgroup isomorphic to $\mathbb{Z}^\infty \oplus \mathbb{Z}_2^\infty$ for each positive integer $n$.
\end{corollary}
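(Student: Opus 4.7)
The plan is to construct, for infinitely many primes $p \equiv 3 \pmod 4$, classes of infinite order in $\mathcal{O}_{n+1}/\mathcal{O}_n$, and for infinitely many primes $p \equiv 1 \pmod 4$, classes of order two, mirroring the strategy used in Corollaries~\ref{cor:AK} and~\ref{cor:Zp}. The candidate generators are the lens spaces $L(p^{n+1},1)$: each has $H_1 = \Z_{p^{n+1}}$, hence lies in $\mathcal{O}_{n+1}$, while the presence of an element of order $p^{n+1}$ in $H_1$ is meant to obstruct membership in $\mathcal{O}_n$.

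Suppose a formal $\Z$-linear combination $\sum_i a_i [L(p_i^{n+1},1)]$, with the $p_i$ pairwise distinct odd primes, represents a class in $\mathcal{O}_n$. Then by definition this class is represented by a connected sum $Y$ of generators of $\mathcal{O}_n$ with possibly reversed orientations, and $H_1(Y;\Z)$ has no element of order $q^{n+1}$ for any prime $q$. Applying Theorem~\ref{thm:main} to the class in $\lens$ produces a canonical representative $L_0$ that is $\Q$-homology cobordant to $Y$, so that $H_1(L_0;\Z) \hookrightarrow H_1(Y;\Z)$, and in particular $H_1(L_0;\Z)$ contains no element of order $p_i^{n+1}$ for any $i$.

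To derive a contradiction, I would appeal to the structure of the canonical representative from Definition~\ref{def:reduced} and Lisca's classification \cite{Lisca:2007-1, Lisca:2007-2}. Lisca's reduction algorithm acts prime-by-prime on connected sums of lens spaces whose first homologies have pairwise coprime orders, so the canonical representative of the above class splits as a connected sum of the canonical representatives of the individual blocks $a_i [L(p_i^{n+1},1)]$. For each odd prime $p$, the class $[L(p^{n+1},1)]$ is nontrivial in $\lens$, has infinite order when $p \equiv 3 \pmod 4$ and order two when $p \equiv 1 \pmod 4$, and any nontrivial multiple of it in $\lens$ has a canonical representative whose first homology retains a $\Z_{p^{n+1}}$ summand. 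Together these facts contradict the assumption unless every $a_i$ is zero (respectively even, in the order-two cases). Selecting infinitely many primes in each congruence class then produces the desired $\Z^\infty \oplus \Z_2^\infty$ subgroup of $\mathcal{O}_{n+1}/\mathcal{O}_n$.

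The main obstacle will be the explicit verification that Lisca's reduction algorithm respects the prime-power decomposition of $H_1$ and preserves the top $\Z_{p^{n+1}}$ summand whenever the $p$-local contribution is nontrivial in $\lens$. These are essentially the same structural ingredients needed for Corollaries~\ref{cor:AK} and~\ref{cor:Zp}, so unpacking them carefully in the present prime-power setting should deliver the result.
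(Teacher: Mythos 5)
Your argument for the $\mathbb{Z}^\infty$ summand is essentially sound and close to the paper's: the connected sums $\#_i\, a_i L(p_i^{n+1},1)$ are reduced in the sense of Definition~\ref{def:reduced} (each summand is reduced by Proposition~\ref{prop:ln1}, and no orientation-reversed pairs occur since for fixed $i$ all copies carry the same sign), so by Proposition~\ref{prop:reduced} they are their own canonical representatives, and Theorem~\ref{thm:main'} forces an injection $\bigoplus_i (\mathbb{Z}_{p_i^{n+1}})^{|a_i|}\hookrightarrow H_1(Y;\mathbb{Z})$ for any $Y$ in the class, which is impossible for $Y\in\mathcal{O}_n$. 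You do not need the vague appeal to Lisca's algorithm acting ``prime-by-prime''; the direct verification that the connected sum is reduced is both what the paper does and what makes the step airtight. Also, the restriction to $p\equiv 3\pmod 4$ is unnecessary here.

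The genuine gap is in the $\mathbb{Z}_2^\infty$ summand. You assert that $[L(p^{n+1},1)]$ has order two in $\mathcal{L}$ when $p\equiv 1\pmod 4$. This is false: by Proposition~\ref{prop:lensspacegp} the torsion of $\mathcal{L}$ is generated by reduced \emph{amphichiral} lens spaces, and $L(r,s)$ is amphichiral if and only if $s^2\equiv -1\pmod r$, which for $s=1$ forces $r\le 2$. So $L(p^{n+1},1)$ is chiral and of infinite order for \emph{every} odd prime $p$, regardless of $p\bmod 4$ (for instance $2L(5,1)$ does not bound a $\Q$-homology ball by Theorem~\ref{thm:Lisca2}, since $5$ is neither a square nor twice a square and $L(5,4)\not\cong L(5,1)$). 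To produce two-torsion you must use genuinely amphichiral reduced lens spaces whose first homology has an element of order exactly $p^{n+1}$; the paper takes $L(5p_i^{n+1},k_i)$ with $p_i\equiv 1\pmod 4$, $p_i\neq 5$, and $k_i^2\equiv -1\pmod{5p_i^{n+1}}$, and then checks by hand (non-squareness of $5p_i^{n+1}$ and $10p_i^{n+1}$, hence non-cobordance to anything in $\mathcal{R}$ or to any $L(n,1)$) that arbitrary connected sums of these with the chiral family remain reduced, so that Theorem~\ref{thm:main'} applies exactly as in the first paragraph. Without some such replacement family your construction only yields $\mathbb{Z}^\infty$.
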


Recall that the branched double cover of a $2$-bridge knot is a lens space. Moreover, the determinant of a knot is the order of the first integral homology group of its branched double cover. The following corollary is an analogue of Theorem~\ref{thm:main} in the context of knot concordance, stating that suitable connected sums of $2$-bridge knots minimize the determinant in their concordance classes. 

\begin{corollary}\label{cor:2bridge} Any smooth concordance class in the subgroup generated by $2$-bridge knots is represented by a connected sum of $2$-bridge knots $K$ such that if $J$ is concordant to $K$, then $\det(K)$ divides $\det(J)$. Moreover, as a connected sum of $2$-bridge knots $K$ is uniquely determined up to isotopy.
\end{corollary}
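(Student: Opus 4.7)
The plan is to deduce Corollary~\ref{cor:2bridge} from Theorem~\ref{thm:main} by passing to branched double covers. Pick any concordance class in the subgroup of $\mathcal{C}$ generated by $2$-bridge knots, and let $J_0$ be a connected sum of $2$-bridge knots representing it. Applying the branched double cover homomorphism $\beta_2 \colon \mathcal{C} \to \Theta_{\mathbb{Q}}^3$ sends this class to a class in $\mathcal{L}$, so Theorem~\ref{thm:main} produces its unique minimal representative $L = \#_i L(p_i, q_i)$, whose first homology injects into that of any other representative. Since every $2$-bridge knot has odd determinant, $|H_1(\Sigma_2(J_0);\mathbb{Z})|$ is odd, and Theorem~\ref{thm:main} forces $|H_1(L;\mathbb{Z})| = \prod_i p_i$ to divide this odd number, so every $p_i$ is odd. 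Consequently each $L(p_i, q_i)$ is the branched double cover of a $2$-bridge knot $K_i$, and $K := \#_i K_i$ satisfies $\Sigma_2(K) = L$.

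Granting that $K$ actually lies in the chosen concordance class (the key point, addressed below), the divisibility conclusion is then immediate. If $J$ is smoothly concordant to $K$, the branched double cover of the concordance is a $\mathbb{Q}$-homology cobordism from $\Sigma_2(J)$ to $\Sigma_2(K) = L$, and Theorem~\ref{thm:main} yields an injection $H_1(L;\mathbb{Z}) \hookrightarrow H_1(\Sigma_2(J);\mathbb{Z})$. Passing to orders shows that $\det(K) = |H_1(L;\mathbb{Z})|$ divides $|H_1(\Sigma_2(J);\mathbb{Z})| = \det(J)$. Uniqueness of $K$ up to isotopy reduces to uniqueness of $L$ in Theorem~\ref{thm:main} together with Schubert's classification of $2$-bridge knots by their branched double covers.

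The main obstacle is verifying that $K$ really represents the original concordance class rather than merely producing a $\mathbb{Q}$-homology cobordism between branched double covers. For this I would appeal to Lisca's concordance classification of connected sums of $2$-bridge knots in \cite{Lisca:2007-2}: the combinatorial reduction on continued fraction data that extracts the minimal $L$ from an arbitrary representative in $\mathcal{L}$ should lift to an explicit chain of concordances among the associated $2$-bridge knot sums. Most of the work would lie in tracking this reduction at the level of knots and confirming that each elementary simplification on continued fraction data is realized by a smooth concordance, thereby producing the desired concordance from $J_0$ to $K$.
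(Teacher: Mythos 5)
Your proposal is correct and follows essentially the same route as the paper: pass to branched double covers, apply Theorem~\ref{thm:main} to get the minimal $L$, use oddness of the determinant to realize $L$ as $\Sigma_2(K)$ for a connected sum of $2$-bridge knots $K$, and reduce uniqueness to the classification of $2$-bridge knots by their double covers. The key point you defer --- that $K$ actually lies in the original concordance class --- is handled in the paper not by lifting each combinatorial reduction to a concordance, but by invoking the already-established Proposition~\ref{prop:2bridgelens} (injectivity of $\beta_2$ on the subgroup generated by $2$-bridge knots), which rests on the same case analysis of Lisca's classification that you point to.
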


The first author and Alfieri \cite{Aceto-Alfieri:2017-1} considered the problem of when a connected sum of two torus knots is concordant to an alternating knot. They show that if $T_{p,q} \# -T_{p',q'}$ is concordant to an alternating knot, then either $T_{p,q}$ and $T_{p',q'}$ are alternating knots, or their difference is of the form $T_{3,6n+1} \# -T_{3,6n+2}$. Using Corollary~\ref{cor:2bridge}, we can provide a complete answer if we restrict ourselves to $2$-bridge knots (recall that every $2$-bridge knot is alternating).

\begin{corollary}\label{cor:torus} Let $T_{p,q}$ and $T_{p',q'}$ be two distinct torus knots. Then $T_{p,q} \# -T_{p',q'}$ is concordant to a connected sum of $2$-bridge knots if and only if $T_{p,q}$ and $T_{p',q'}$ are $2$-bridge knots.
\end{corollary}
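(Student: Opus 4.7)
The \emph{if} direction is immediate: when both $T_{p,q}$ and $T_{p',q'}$ are $2$-bridge, $T_{p,q}\#-T_{p',q'}$ is itself a connected sum of $2$-bridge knots. For the converse, suppose $J:=T_{p,q}\#-T_{p',q'}$ is smoothly concordant to a connected sum $K$ of $2$-bridge knots. The branched-double-cover homomorphism $\beta_2\colon\mathcal{C}\to\qhom$ sends $J$ to $Y:=\Sigma(2,p,q)\#-\Sigma(2,p',q')$ and $K$ to $\Sigma_2(K)$, which is a connected sum of lens spaces; hence $[Y]_{\mathbb{Q}}\in\mathcal{L}$, and Theorem~\ref{thm:main} provides a unique minimal connected sum of lens spaces $L$ in the same $\mathbb{Q}$-cobordism class satisfying $H_1(L;\mathbb{Z})\hookrightarrow H_1(Y;\mathbb{Z})$.

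Assume for contradiction that at least one knot, say $T_{p,q}$, is not $2$-bridge, so $\min(p,q)\geq 3$. If $T_{p',q'}$ is $2$-bridge, then $\Sigma(2,p',q')$ is itself a lens space, and moving it to the opposite end of the $\mathbb{Q}$-cobordism gives $[\Sigma(2,p,q)]_{\mathbb{Q}}\in\mathcal{L}$. Since $H_1(\Sigma(2,p,q);\mathbb{Z})$ is cyclic of order $\det(T_{p,q})\in\{1,q\}$, the minimality statement of Theorem~\ref{thm:main} forces the representative of $[\Sigma(2,p,q)]_{\mathbb{Q}}$ to be either $S^3$ or a single lens space $L(a,b)$ with $a\mid\det(T_{p,q})$. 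When $T_{p',q'}$ is also non-$2$-bridge, the same reduction applies, using Corollary~\ref{cor:AK} in the subcase where both $\Sigma(2,p,q)$ and $\Sigma(2,p',q')$ are integer homology spheres: then $[Y]_{\mathbb{Q}}\in\psi(\Theta_{\mathbb{Z}}^3)\cap\mathcal{L}=0$, so $\Sigma(2,p,q)$ and $\Sigma(2,p',q')$ are $\mathbb{Q}$-cobordant.

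The main obstacle is then to rule out that $\Sigma(2,p,q)$ with $\min(p,q)\geq 3$ is $\mathbb{Q}$-cobordant either to a single lens space (possibly $S^3$) or to another Brieskorn sphere $\Sigma(2,p',q')$ with $(p,q)\neq(p',q')$. This step lies outside the lattice-theoretic framework developed in this paper and is most cleanly handled via Ozsv\'ath--Szab\'o correction terms: in the integer homology sphere case the $d$-invariants of $\Sigma(2,p,q)$ are nonzero, and in the remaining case the refined spin$^c$ $d$-invariants do not match those of any lens space of the appropriate first homology nor those of another such Brieskorn sphere. Combined with the smooth non-concordance of distinct positive torus knots (detected for instance by the slice genus), this yields the desired contradiction.
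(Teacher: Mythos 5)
There is a genuine gap. Your reduction via branched double covers is fine as far as it goes, but the entire content of the corollary ends up concentrated in the step you explicitly defer: ruling out that $\Sigma(2,p,q)$, for $T_{p,q}$ not $2$-bridge, is $\Q$-homology cobordant to a single lens space (or to $S^3$, or to another such Brieskorn sphere). You offer no proof of this, only an appeal to ``refined $d$-invariants,'' and the one concrete claim you do make --- that in the integer homology sphere case the $d$-invariant of $\Sigma(2,p,q)$ is nonzero --- is false: $d(\Sigma(2,3,7))=0$, so correction terms alone do not even rule out $\Sigma(2,3,7)$ bounding a $\Q$-homology ball. Matching the full $\Spinc$-equivariant $d$-invariants of every non-lens Brieskorn sphere $\Sigma(2,p,q)$ against every lens space with the appropriate $H_1$ is a substantial computation that you have not carried out, and it is not a consequence of anything in this paper. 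The closing appeal to linear independence of torus knots in $\mathcal{C}$ does not help either: non-concordance of $T_{p,q}$ and $T_{p',q'}$ says nothing about whether their branched double covers are $\Q$-homology cobordant.

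The paper takes a different and much shorter route that avoids this problem entirely. By \cite[Corollary~1.5]{Aceto-Alfieri:2017-1}, if $T_{p,q}\#-T_{p',q'}$ is concordant to an alternating knot (and every $2$-bridge knot is alternating), then either both torus knots are alternating --- hence $2$-bridge, being of the form $T_{2,k}$ --- or the difference is $T_n:=T_{3,6n+1}\#-T_{3,6n+2}$. For the remaining family one applies Corollary~\ref{cor:2bridge}: a connected sum of $2$-bridge knots $K$ concordant to $T_n$ must satisfy $\det(K)\mid\det(T_n)=3$, so $K$ is the unknot or $\pm T_{2,3}$, contradicting Litherland's linear independence of torus knots in $\mathcal{C}$ \cite{Litherland:1979-1}. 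If you want to salvage your approach, you would need either to import the Aceto--Alfieri classification as the paper does, or to supply an actual obstruction (beyond ordinary correction terms) showing that no $\Sigma(2,p,q)$ with $\min(p,q)\geq 3$ lies in $\mathcal{L}$; as written, the argument does not close.
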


In a different direction, knot Floer homology  \cite{ozsvath2004holomorphicknot, Rasmussen:2003-1} has been a useful tool in obstructing manifolds that are obtained from Dehn surgery on knots from being lens spaces \cite{Rasmussen:2004-1,Rasmussen:2007-1, Baker-Grigsby-Hedden:2008-1, Greene:2013-1, Greene:2015-1}. As a consequence of Theorem~\ref{thm:main} we show a similar result for the $\Q$-homology cobordism class of such manifolds. Below, $V_0$ and $\nu^+$ are the concordance invariants introduced respectively in \cite{Rasmussen:2003-1} and \cite{Hom-Wu:2016-1}. 

\begin{theorem}\label{thm:B} 
Let $p$ be a prime and $q>0$. If $S^3_{p/q}(K)$ has finite order in $\Theta_\mathbb{Q}^3 / \mathcal{L}$, then $$ 3V_0(K) +1 \leq p.$$ Furthermore, if $q \equiv -1 \pmod{p}$, then $V_0(K)=0$, or equivalently $\nu^+(K)=0$.
\end{theorem}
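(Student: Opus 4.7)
The plan is to combine Theorem~\ref{thm:main} with the Ni--Wu formula for d-invariants of $p/q$-surgery on a knot. Suppose $[S^3_{p/q}(K)]$ has finite order in $\Theta_\Q^3/\mathcal{L}$. Then for some $n \geq 1$ the class $n\cdot[S^3_{p/q}(K)]$ lies in $\mathcal{L}$, so $Y := \#^n S^3_{p/q}(K)$ admits a $\Q$-homology cobordism $W$ to a connected sum of lens spaces. By Theorem~\ref{thm:main} this connected sum can be taken to be the reduced representative $L$, whose first homology injects into $H_1(Y;\Z) \cong \Z_p^n$. Since $p$ is prime, necessarily $L = \#_{j=1}^{k} L(p,q_j)$ for some $0 \leq k \leq n$ with $\gcd(q_j,p) = 1$.

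Next I would exploit the $\Q$-homology cobordism invariance of d-invariants in matching $\Spinc$ structures across $W$, combined with the Ni--Wu formula, which in the range $0 \leq i < q$ reads
\[
d(S^3_{p/q}(K), i) \;=\; d(L(p,q), i) - 2V_0(K).
\]
Thus $Y$ has a distinguished collection of $\Spinc$ structures on which its d-invariants are shifted downward by $2V_0(K)$ in each summand relative to those of $\#^n L(p,q)$. Matching these shifted values against the d-invariants of $L = \#_{j=1}^{k} L(p, q_j)$ through $W$, and invoking the explicit formulas for d-invariants of lens spaces, I would extract the inequality $3V_0(K) + 1 \leq p$ by a pigeonhole argument.

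For the refinement when $q \equiv -1 \pmod p$, we have $L(p,q) \cong -L(p,1)$, and the resulting reflection symmetry of its d-invariants, combined with the cobordism constraints, forces the $2V_0(K)$ shift on the distinguished $\Spinc$ structures to vanish. Hence $V_0(K) = 0$, which by \cite{Hom-Wu:2016-1} is equivalent to $\nu^+(K) = 0$.

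The hard part will be the combinatorial step producing the precise coefficient $3$ in the main bound: it requires careful tracking of which $\Spinc$ structures on $Y$ correspond to which on $L$ through $W$, together with the monotonicity of $\{V_j(K)\}_{j \geq 0}$ and the specific distribution of d-invariants across the lens space summands of $L$.
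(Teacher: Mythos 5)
Your setup is the same as the paper's: finite order gives $n S^3_{p/q}(K)$ $\Q$-homology cobordant to a reduced $\#_{j=1}^{n'}L(p,q_j)$ with $n'\le n$ by Theorem~\ref{thm:main}, and the Ni--Wu formula (Proposition~\ref{proposition:Viformula}) brings in the $V_i$'s. But the step you defer as ``the hard part'' --- the pigeonhole matching of individual spin$^c$ structures that is supposed to produce the coefficient $3$ --- is exactly where the argument lives, and as sketched it does not go through. The vanishing of correction terms from \cite[Proposition~4.1]{Owens-Strle:2006-1} holds only on a metabolizer $M<H_1(Y;\Z)\cong\Z_p^{n+n'}$, and for $p$ prime one only knows that $M$ surjects onto each cyclic factor \cite[Corollary~3]{Kim-Livingston:2014-1}, not that it contains any prescribed tuple. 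So when $n>1$ you cannot isolate a distinguished spin$^c$ structure on a single surgery summand and match it against a lens space: every vanishing relation mixes the unknown $d$-invariants (hence unknown $V_i$'s) of the other $n-1$ copies of $S^3_{p/q}(K)$ with those of the $L(p,q_j)$. Your proposal never says how to disentangle this, and no choice of individual spin$^c$ structures obviously yields $3V_0(K)+1\le p$.

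The paper avoids the matching problem entirely by \emph{summing} the vanishing relation over the whole metabolizer; surjectivity onto each factor then turns this into the single identity~\eqref{eq:key}, in which the lens space contributions collapse, via Proposition~\ref{proposition:cassonwalkerdinvt}, to Casson--Walker invariants $p\lambda(L(p,\cdot))$. The coefficient $3$ then comes from two estimates absent from your sketch: the extremal bound $\lambda(L(p,1))\le\lambda(L(p,q))\le\lambda(L(p,p-1))$ of Lemma~\ref{lem:1} together with $\lambda(L(p,p-1))=\tfrac{p^2+2}{12p}-\tfrac14$, and the lower bound $\sum_\ell \max\{\cdots\}\ge pV_0(K)-\tfrac{p^2-1}{4}$ obtained from $V_{i+1}\ge V_i-1$ (Proposition~\ref{proposition:Videcreasing}); the case $p=2$ is treated separately. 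Your argument for $q\equiv-1\pmod p$ is likewise too vague: ``reflection symmetry forces the shift to vanish'' presupposes that the spin$^c$ structures carrying $V_0$ are among those constrained, which is the same unresolved matching issue. The paper instead notes that $L(p,p-q)=L(p,1)$ minimizes $\lambda$, so the left-hand side of~\eqref{eq:key} is nonpositive, forcing every $V_i$ appearing on the right to vanish. In short: correct scaffolding, but the quantitative core of both assertions is missing and the route you indicate for it is obstructed.
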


By restricting to prime integer surgeries on knots we get a better lower bound and an upper bound on the surgery coefficients. Note that Rasmussen~\cite{Rasmussen:2004-1} showed that if a non-trivial knot admits a lens space surgery of slope $p$, then $p \leq 4g_3(K)+3$, where $g_3$ is the Seifert genus. The following result should be thought of as the concordance analogue of~\cite[Theorem~1]{Rasmussen:2004-1}. We denote by $g_4(K)$ the slice genus of a knot $K$. 

\begin{theorem}\label{thm:C}
Let $p$ be a prime and $K$ a knot with $\nu^+(K) \neq 0$. If $S^3_p(K) \in \lens$, then $$4V_0(K) +1 \le p \le 4g_4(K)+3.$$
\end{theorem}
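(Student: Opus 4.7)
My plan proceeds in three stages. First, since $p$ is prime, $|H_1(S^3_p(K);\Z)| = p$ is not a nontrivial perfect square, so $S^3_p(K)$ cannot bound a $\Q$-homology ball. Combined with Theorem~\ref{thm:main}, the unique reduced representative of the class $[S^3_p(K)] \in \lens$ must therefore be a single lens space $L(p,q)$ with $\gcd(p,q)=1$; in particular, there is a $\Q$-homology cobordism $W$ from $S^3_p(K)$ to $L(p,q)$.

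For the lower bound $4V_0(K) + 1 \leq p$, Theorem~\ref{thm:B} already yields the weaker $3V_0(K) + 1 \leq p$. I would refine it by using $W$ to match Ozsv\'ath--Szab\'o correction terms of $S^3_p(K)$ and $L(p,q)$ across an affine bijection of spin$^c$ structures, then substituting the Ni--Wu surgery formula expressing $d(S^3_p(K), i)$ in terms of $V_j(K)$ and the analogous invariants of the mirror $mK$, and comparing with the explicit $d$-invariants of $L(p,q)$. Combining the resulting inequalities at two appropriate spin$^c$ structures, and exploiting that $\lens$ is a subgroup (so $-S^3_p(K) \in \lens$ too), produces a second $V_0$ contribution beyond the one used in Theorem~\ref{thm:B}, sharpening the coefficient from $3$ to $4$.

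For the upper bound $p \leq 4g_4(K) + 3$, set $g = g_4(K)$ and let $\Sigma \subset B^4$ be a properly embedded genus-$g$ surface with $\partial \Sigma = K$. Attaching a $p$-framed $2$-handle produces $X := B^4 \cup h^2_p$ with $\partial X = S^3_p(K)$; capping $\Sigma$ with the core of the handle gives a closed surface $\hat\Sigma \subset X$ of genus $g$ and self-intersection $p$. Let $P^*$ be the canonical negative-definite plumbing bounding $-L(p,q)$; then
\[
Z := X \cup_{S^3_p(K)} W \cup_{L(p,q)} (-P^*)
\]
is a closed, positive-definite smooth $4$-manifold, so by Donaldson's theorem its intersection form is the standard diagonal form on $\Z^N$. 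The class $[\hat\Sigma]$ lies in the rank-$1$ orthogonal complement of $H_2(-P^*;\Z)$ and is primitive of square $p$; combined with the rigidity of lens-space lattice embeddings from Lisca~\cite{Lisca:2007-1, Lisca:2007-2} and a genus bound for primitive square-$p$ classes in a diagonal lattice, this yields $p \leq 4g + 3$.

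The main obstacle is the upper bound. Since $Z$ is positive definite, Seiberg--Witten basic classes vanish and the usual adjunction inequality is vacuous, so the required genus bound must instead be extracted from the rigid lattice structure furnished by the Donaldson--Lisca embedding. A secondary technical difficulty is the sharpening from $3V_0$ to $4V_0$ in the lower bound, which requires careful bookkeeping of the mirror contribution at the appropriate spin$^c$ structures, reflecting the use of both orientations of $S^3_p(K) \in \lens$.
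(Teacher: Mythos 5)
Your initial reduction is fine: since $p$ is prime (hence not a square), $S^3_p(K)$ cannot bound a $\Q$-homology ball, so Theorem~\ref{thm:main} produces a single lens space $L(p,q)$ in its class, and this is exactly how the paper starts. The genuine gap is your upper bound. There is no ``genus bound for primitive square-$p$ classes in a diagonal lattice'': Donaldson's theorem together with Lisca's embedding rigidity constrains only the \emph{homology class} of $\hat\Sigma$ inside $(\Z^N,\Id)$, and an intersection lattice carries no information about the minimal genus of a surface representing a given class --- that is a smooth, not a homological, invariant. You correctly note that the standard adjunction inequality is unavailable in a positive definite closed $4$-manifold, but the replacement you invoke does not exist; genus bounds that do hold in positive definite manifolds (e.g.\ the Thom conjecture in $\mathbb{CP}^2$) come from hard gauge theory for surfaces of nonnegative square, not from lattice combinatorics, and nothing of that sort is set up in your sketch. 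The paper's actual route to $p\le 4g_4(K)+3$ is entirely different: assuming $g_4(K)+1\le p/4$, it sums all correction terms of $S^3_p(K)\#L(p,q)$ over the metabolizer, rewrites the lens-space side via Casson--Walker invariants (Proposition~\ref{proposition:cassonwalkerdinvt}), bounds the knot side $2(V_0+2V_1+\cdots+2V_{(p-1)/2})$ by $g_4(K)(g_4(K)+1)$ using Rasmussen's inequality, deduces from Proposition~\ref{proposition:cassonwalker} that $q\in\{1,2,3\}$, and eliminates each of these three cases by explicit estimates. None of that machinery appears in your proposal, so the right-hand inequality is unproved.

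The lower bound sketch is closer in spirit (correction terms across the cobordism, Ni--Wu, comparison with lens-space $d$-invariants), but the mechanism you propose for sharpening $3V_0(K)+1\le p$ to $4V_0(K)+1\le p$ --- harvesting ``a second $V_0$ contribution'' from the mirror by using $-S^3_p(K)\in\lens$ --- is not where the improvement comes from, and it is not clear it could be: $V_0$ of the mirror is an independent invariant and does not reproduce $V_0(K)$. The paper instead uses a single well-chosen spin$^c$ structure: because $p$ is prime, the metabolizer of $S^3_p(K)\#L(p,q)$ surjects onto each $\Z_p$ summand, so some vanishing correction term has the form
\begin{equation*}
d(S^3_p(U),0)-2V_0(K)+d(L(p,q),\ell)=0,
\end{equation*}
and since both $d(S^3_p(U),0)$ and $|d(L(p,q),\ell)|$ are at most $(p-1)/4$ by Lemma~\ref{lemma:inequalond}, one gets $2V_0(K)\le (p-1)/2$ immediately. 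The coefficient $4$ comes from this pointwise lens-space $d$-invariant bound, not from exploiting both orientations.
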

Recall that for $L$-space knots it follows from~\cite[Corollary 1.6]{Ozsvath-Szabo:2005-1} that $g_3 = g_4$, so the right-most inequality coincides with Rasmussen's bound in this case. The right inequality is sharp for all $(2,2k+1)$-torus knots, and the left inequality is sharp for the $(2,3)$-torus knot. 

Using Theorem~\ref{thm:B} and Theorem~\ref{thm:C}, we exhibit infinitely many irreducible $L$-spaces not $\Q$-homology cobordant to any connected sum of lens spaces (see Corollary~\ref{cor:L'/L} and Corollary~\ref{cor:gstarobstruction}). 
~\\

The results described in Theorem~\ref{thm:main} suggest the following question.
\begin{question}\label{question} Does every class in $\qhom$ contain an element $\widetilde{Y}$ such that if $Y$ is $\Q$-homology cobordant to $\widetilde{Y}$, then there is an injection $$H_1(\widetilde{Y}; \mathbb{Z}) \hookrightarrow H_1(Y;\mathbb{Z})?$$
\end{question}

Theorem~\ref{thm:main} provides a positive answer for the classes in $\lens$. It seems likely that this is a special property of the subgroup $\mathcal{L}$, and we think it would be interesting to find a class in $\qhom$ that gives a negative answer to Question~\ref{question}. One way to do this would be to exhibit two $\Q$-homology cobordant $\Q$-homology spheres $Y_1$ and $Y_2$, such that the orders of their first integral homology groups are relatively prime, and there exists no $\Z$-homology sphere in the same $\Q$-cobordism class.

\subsection*{Organization of the paper}
In Section \ref{sec:lens}, we recall some results from~\cite{Lisca:2007-2}, and state some consequences; we then use them to give a more precise statement for the first part of Theorem \ref{thm:main}. Section~\ref{sec:lattices} contains some preliminaries and technical results on integral lattices. The proof of Theorem~\ref{thm:main} is concluded in Section~\ref{sec:main'}. In Section~\ref{sec:consequences}, we state a non-splitability result for lens spaces  (Proposition~\ref{prop:nonsplit}) and prove the corollaries stated in the introduction. Finally, in Section~\ref{sec:applications}, we prove Theorems~\ref{thm:B} and \ref{thm:C}, and describe some of their consequences.

\subsection*{Notation and conventions} 
In this paper, every $3$-manifold is smooth, connected, closed, and oriented. All $4$-manifolds are smooth, connected, compact, and oriented. We indicate with $-M$ the manifold $M$ with reversed orientation and $-K$ the knot obtained by taking the mirror image of $K$ with the reversed orientation. The connected sum of $n$ copies of a manifold $M$ is denoted by $nM$ and the connected sum of $n$ copies of a knot $K$ is denoted by $nK$.

\subsection*{Acknowledgments}The authors would like to thank Charles Livingston and Min Hoon Kim for some helpful correspondence, Peter Feller and Paolo Lisca for interesting conversations, and Brendan Owens for some corrections on an earlier version of the paper. JP would also like to thank Jennifer Hom and Peter Lambert-Cole for helpful conversations.
 PA and DC acknowledge support from the European Research Council (ERC) under the European Unions Horizon 2020 research and innovation programme (grant agreement No 674978). PA and JP were partially supported by MPIM. Lastly, we are very grateful to the anonymous referees for their detailed and thoughtful suggestions.

\section{The lens space subgroup}~\label{sec:lens}
Recall that if $\gcd(p,q)=1$, the lens space $L(p,q)$ is the result of $-p/q$ Dehn surgery on the unknot. Up to orientation preserving diffeomorphism, we may assume that $p > q >0$. Moreover, there is an orientation preserving diffeomorphism between $-L(p,q)$ and $L(p,p-q)$. We now recall Lisca's classification of lens spaces up to $\Q$-homology cobordism. In \cite{Lisca:2007-1} the author defines a certain subset $\mathcal{R} \subset \Qp$. In \cite{Lisca:2007-2} the following family of subsets is introduced.
$$F_n := \left\{ \frac{m^2n}{mnk + 1} \mid m > k > 0, \gcd(m, k) = 1\right\} \subset \Q, \text{ $ $ }n \geq 2.$$
With this notation in place we can state the main result from \cite{Lisca:2007-2}.

\begin{theorem}[\cite{Lisca:2007-2}]\label{thm:Lisca2} A connected sum of lens spaces bounds a $\Q$-homology ball if and only if each summand is $($possibly orientation-reversing$)$ diffeomorphic to one of the following
\begin{enumerate}[font=\upshape]
\item $ L(p, q)$, $p/q \in \mathcal{R};$
\item $L(p, q) \# L(p, p - q);$
\item $L(p_1, q_1)\# L(p_2, q_2)$, $p_i/q_i \in F_2, i = 1, 2;$
\item $L(p, q) \# L(n, n - 1)$, $p/q \in F_n$ for some $n \geq 2;$
\item $L(p_1, q_1)\# L(p_2, p_2 - q_2)$, $p_i/q_i \in F_n, i = 1, 2,$ for some $n \geq 2$.\qed\end{enumerate}
\end{theorem}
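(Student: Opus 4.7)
The plan is to prove both directions of the equivalence. For sufficiency---that each summand drawn from families (1)--(5) bounds a rational homology ball---the constructions are explicit. Family (1) is the content of the main result of \cite{Lisca:2007-1}: if $p/q \in \mathcal{R}$, then $L(p,q)$ bounds a rational ball built from an explicit ribbon move on an associated two-bridge knot. Family (2) is immediate from $L(p,p-q) \cong -L(p,q)$, so $L(p,q) \# L(p,p-q)$ bounds the trivial rational cobordism from $L(p,q)$ to itself with one end capped off by a $3$-ball. For families (3)--(5), one adapts the ribbon-disk construction to pairs of lens spaces by using the parametrization $p/q = m^2n/(mnk+1)$ to write down explicit ribbon surfaces in $S^3$ (or in $\#_k S^1 \times S^2$ type manifolds) whose exteriors are the desired rational balls.

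For necessity, suppose $Y = L(p_1,q_1) \# \cdots \# L(p_r, q_r)$ bounds a rational homology ball $W$. Each summand bounds its canonical negative definite plumbed 4-manifold $P_i$, whose intersection form is the linear lattice $\Lambda(p_i/q_i)$ associated to the Hirzebruch--Jung continued fraction expansion of $p_i/q_i$ (all coefficients at most $-2$). Taking boundary connected sums yields a negative definite plumbed 4-manifold $P = P_1 \natural \cdots \natural P_r$ bounded by $Y$. Gluing $-W$ to $P$ along $Y$ produces a closed, smooth, negative definite 4-manifold $X$ with $b_2(X) = b_2(P)$, since $W$ is a rational ball. Donaldson's diagonalization theorem then provides an embedding of lattices
\[
(H_2(P;\Z), Q_P) \;=\; \bigoplus_{i=1}^r \Lambda(p_i/q_i) \;\hookrightarrow\; -\Z^{b_2(P)}
\]
as a full-rank sublattice, so the problem reduces to classifying which direct sums of linear lattices admit such embeddings.

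The heart of the proof, and the main obstacle, is a combinatorial lattice-embedding classification. Following the framework of \cite{Lisca:2007-1}, one encodes each summand by a ``string'' of vectors in $-\Z^{b_2(P)}$ with prescribed pairwise overlaps dictated by the continued fraction coefficients, and then analyses how strings coming from different summands may share standard basis vectors. An elaborate but structured case analysis---organised by the number of summands with nontrivial string interactions and by the sizes of the overlaps---shows that, beyond individual strings of type (1) and cancelling pairs of type (2), the only admissible irreducible configurations of two interacting strings are precisely those of families (3), (4), and (5). Carrying out this exhaustive ruling-out of all other potential shared-vector patterns---and in particular showing that no interaction involving three or more strings at once can occur---is where the technical difficulty of the theorem lies, and it is where the bulk of the work in \cite{Lisca:2007-2} is spent.
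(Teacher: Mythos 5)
This theorem is not proved in the paper at all: it is quoted verbatim from Lisca's work \cite{Lisca:2007-2} (building on \cite{Lisca:2007-1}), which is why it carries a \verb|\qed| with no argument. So the only meaningful comparison is between your outline and Lisca's actual proof, and on that score your architecture is the right one: for necessity, cap the connected sum with its canonical negative definite linear plumbing, glue on the reversed rational ball, invoke Donaldson's diagonalization to get a full-rank embedding of the direct sum of linear lattices into $(\Z^N,-\Id)$, and reduce to a combinatorial classification of such embeddings; for sufficiency, exhibit rational balls directly. This is exactly the strategy the present paper re-uses in Proposition~3.5 and Proposition~4.1.

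The gap is that the entire mathematical content of the theorem sits inside the two steps you assert rather than prove. The ``elaborate but structured case analysis'' is not a routine verification: it occupies most of \cite{Lisca:2007-1,Lisca:2007-2}, requires the machinery of linear/standard subsets, the invariants $I(S)$, $c(S)$, bad components, and the contraction/expansion calculus (the same machinery Section~3 of this paper imports), and in particular must rule out interactions among three or more strings and show that every irreducible two-string configuration with $I<0$ arises from $\{e_1+e_2,e_1-e_2\}$ by $-2$-final expansions. Nothing in your proposal indicates how this would be done, so as a standalone proof the necessity direction is unestablished. On the sufficiency side, families (3)--(5) are likewise only gestured at; the clean route is to show that $p/q\in F_n$ forces $L(p,q)$ to be $\Q$-homology cobordant to $L(n,1)$, after which (3) and (5) follow formally from (4) together with (2), but some explicit construction is still required and you do not supply one. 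A minor slip: for family (2), ``the trivial cobordism with one end capped off by a $3$-ball'' is not a construction; the rational ball bounded by $L(p,q)\,\#\,{-L(p,q)}$ is $(L(p,q)\times[0,1])\setminus\nu(\gamma)$ for a vertical arc $\gamma$, whose homology is that of $L(p,q)$ minus a point and hence rationally trivial in positive degrees.
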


Theorem~\ref{thm:main} guarantees the existence of a special representative contained in any given class of $\lens$. In order to characterize such elements, we introduce the following definition. 
\begin{definition}\label{def:reduced} A connected sum of lens spaces is said to be \emph{reduced} if the following conditions are satisfied.
\begin{enumerate}
\item there is no summand $\lp$ with $p/q \in \mathcal{R};$
\item there is no summand of the form $L(p, q) \# L(p, p - q);$
\item there is no summand of the form $L(p, q)$ with $p/q \in F_n$ or $p/(p-q) \in F_n$.\end{enumerate}
\end{definition}

\begin{proposition}\label{prop:ln1} The lens space $L(m,1)$ is reduced if and only if $m\neq 4$.
\end{proposition}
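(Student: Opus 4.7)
The plan is to verify each of the three conditions of Definition~\ref{def:reduced} for $L(m,1)$.

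Condition (2) is immediate: $L(m,1)$ is a single summand, so it cannot be of the form $L(p,q)\#L(p,p-q)$.

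Next, for condition (3) I will show that neither $m/1$ nor $m/(m-1)$ lies in any $F_n$ with $n\geq 2$. A short gcd computation shows that the fraction $a^2n/(ank+1)$ defining a general element of $F_n$ is already in lowest terms: any common prime divisor of $a^2n$ and $ank+1$ would divide $an$, and hence would divide $(ank+1)-k(an)=1$. Comparing numerators and denominators, $m/1 = a^2n/(ank+1)$ forces $ank+1=1$, which is impossible, while $m/(m-1) = a^2n/(ank+1)$ forces $m=a^2n$ and $m-1=ank+1$, yielding $an(a-k)=2$, which has no solution with $a>k\geq 1$ and $n\geq 2$.

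The heart of the argument is condition (1), which I claim is equivalent to $m\neq 4$. The easy direction is that when $m=4$, the identity $L(4,1)=L(2^2,\,2\cdot 1-1)$ exhibits $L(4,1)$ as the boundary of a rational homology ball via the Casson--Harer construction, so $4/1\in\mathcal{R}$ and $L(4,1)$ is not reduced. For the converse, I will appeal to Lisca's combinatorial characterization of $\mathcal{R}$ in~\cite{Lisca:2007-1}: the rational $m/1$ has Hirzebruch--Jung continued fraction expansion $[m]$, a single entry, and inspecting Lisca's explicit families of admissible fractions shows that a single-entry expansion $[m]$ belongs to $\mathcal{R}$ only when $m=4$.

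The main obstacle is this last verification against Lisca's list of families, since one must rule out all other admissible patterns that could collapse to a single-entry continued fraction. A parallel route, useful in particular when the linking form obstruction is inconclusive (as happens when $m$ is a perfect square), is to apply the Heegaard Floer $d$-invariant obstruction to $L(m,1)$: if $L(m,1)$ bounded a rational homology ball then there would be a subgroup of $\spinc$ structures of order $\sqrt{m}$ on which $d$ vanishes, and a direct computation using the standard recursion for $d(L(m,1),i)$ rules out this possibility for $m\neq 4$.
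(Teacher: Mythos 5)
Your proof is correct and follows essentially the same route as the paper's: one checks the three conditions of Definition~\ref{def:reduced} directly, the substance being that $m$ and $m/(m-1)$ never lie in any $F_n$ and that $m\in\mathcal{R}$ exactly when $m=4$, read off from Lisca's explicit description of $\mathcal{R}$. You supply more detail than the paper on the $F_n$ exclusion and sketch a viable $d$-invariant alternative for the $\mathcal{R}$ step, but the core argument is the same.
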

\begin{proof} By Definition~\ref{def:reduced}, we only need to check whether $m$ is contained in $\mathcal{R}$ or $F_n$ for some $n\ge2$. Note that for each $m\geq2$, $m$ and $m/(m-1)$ are not contained in any $F_n$ by definition.  Further, it is straightforward to check from \cite[Definition~$1.1$]{Lisca:2007-1} that $m$ is contained in $\mathcal{R}$ if and only if $m=4$, which completes the proof.
\end{proof}

Next, we describe a procedure to obtain the reduced representative in any class of $\mathcal{L}$.

\begin{proposition}\label{prop:reduced} For any $Y \in \mathcal{L}$, there exists a unique $($up to orientation preserving diffeomorphism$)$, reduced, possibly empty, connected sum of lens spaces $L_Y$ which is $\Q$-homology cobordant to $Y$. Moreover, any non-reduced connected sum of lens spaces $L$ that is $\Q$-homology cobordant to $L_Y$ satisfies $|H_1(L;\Z)|> |H_1(L_Y;\Z)|$.
\end{proposition}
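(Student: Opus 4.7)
My plan is to establish existence of a reduced representative via an explicit reduction algorithm, prove uniqueness by combining that algorithm with Theorem~\ref{thm:Lisca2}, and derive the strict inequality on $|H_1|$ as a direct consequence. For existence, I start from any expression of $Y$ as a connected sum of lens spaces $L$ and apply whichever of the following three moves is available, each preserving the $\Q$-homology cobordism class by a case of Theorem~\ref{thm:Lisca2}: (i)~delete any summand $L(p,q)$ with $p/q \in \mathcal{R}$, by case~(1); (ii)~delete any pair $L(p,q) \# L(p,p-q)$, by case~(2); (iii)~replace any summand $L(p,q)$ with $p/q \in F_n$ (respectively $p/(p-q) \in F_n$) by $L(n,1)$ (respectively $L(n,n-1)$), by case~(4). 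Each move strictly decreases $|H_1|$; in move~(iii) the order changes by a factor of $n/p = 1/m^2 \leq 1/4$, since $p = m^2 n$ with $m \geq 2$. Because $|H_1|$ is a positive integer, the algorithm terminates in finitely many steps at a reduced representative $L_Y$ in the class of $Y$.

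For uniqueness, suppose $L_1$ and $L_2$ are two reduced connected sums of lens spaces both representing the class of $Y$. Then $L_1 \# -L_2$ bounds a $\Q$-homology ball, so by Theorem~\ref{thm:Lisca2} its summands partition into blocks of the five listed types. Since the conditions in Definition~\ref{def:reduced} are symmetric under orientation reversal, both $L_1$ and $-L_2$ are reduced. Every block of type~(1), (3), (4) or (5) contains at least one summand $L(p,q)$ with $p/q$ in $\mathcal{R}$ or in some $F_n$; such a summand cannot appear in either of the reduced sums $L_1$ or $-L_2$, so these block types cannot occur at all. Only blocks of type~(2), namely $L(p,q) \# L(p,p-q)$, can remain, and reducedness again forces each such block to place exactly one summand in $L_1$ and the other in $-L_2$. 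Writing $L(p,q) \in L_1$ and $L(p,p-q) \in -L_2$ yields $L(p,q) \in L_2$, producing a bijection of summands and hence an orientation-preserving diffeomorphism $L_1 \cong L_2$.

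The inequality is then immediate: if $L$ is a non-reduced sum of lens spaces $\Q$-homology cobordant to $L_Y$, the algorithm applied to $L$ performs at least one step, strictly decreasing $|H_1|$, before terminating at the unique reduced form $L_Y$. The main obstacle I anticipate is verifying that the reduced property is symmetric under orientation reversal: closure of $\mathcal{R}$ under $p/q \mapsto p/(p-q)$ follows because a lens space and its orientation reversal bound $\Q$-balls simultaneously, while the $F_n$-conditions in Definition~\ref{def:reduced} are symmetrized by construction. A secondary issue is that move~(iii) can create intermediate non-reduced summands (for instance $L(4,1)$, which lies in $\mathcal{R}$ by Proposition~\ref{prop:ln1}), but any such are then cleared by subsequent applications of moves~(i) or (ii), preserving termination.
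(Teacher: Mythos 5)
Your proof is correct and follows essentially the same route as the paper's: reduce via the moves supplied by cases (1), (2), (4) of Theorem~\ref{thm:Lisca2}, use the classification of $\Q$-homology-ball-bounding sums to show only type-(2) blocks can occur between two reduced representatives, and observe that each reduction step strictly decreases $|H_1|$. You supply somewhat more detail than the paper (the orientation-reversal symmetry of reducedness, the factor $n/p = 1/m^2$, and the $L(4,1)$ caveat), all of which checks out.
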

\begin{proof}Let $L$ be a connected sum of lens spaces $\Q$-homology cobordant to $Y$. Whenever there is a summand $\lp$ with $p/q \in F_n$ (resp.\ $p/(p-q) \in F_n$), we can replace it with $L(n,1)$ (resp.\ $L(n,n-1)$) by using the relation $(4)$ from Theorem~\ref{thm:Lisca2}. Further, by using relations $(1)$ and $(2)$ from Theorem~\ref{thm:Lisca2}, it is clear that $L$ is $\Q$-homology cobordant to some reduced connected sum of lens spaces. 

Suppose now that $L_1$ and $L_2$ are reduced connected sum of lens spaces and that $L_1$ is $\Q$-homology cobordant to $L_2$. Then $L_1 \# -L_2$ bounds $\Q$-homology ball, and by Theorem~\ref{thm:Lisca2}, together with the fact that $L_1$ and $L_2$ are reduced, it is easy to see that $L_1 \# -L_2$ can be decomposed as a connected sum where each summand is of the form $L(p,q) \# L(p,p-q)$. Then it is clear that there is a orientation preserving diffeomorphism between $L_1$ and $L_2$. 

Lastly, suppose $L$ is a non-reduced connected sum of lens spaces $\Q$-homology cobordant to $L_Y$. Then, by the uniqueness of the reduced form we can apply the reduction process as described above to obtain $L_Y$ from $L$. The proof is completed by noting that each step strictly decreases the order of the first integral homology.\end{proof}

We call a lens space \emph{amphichiral} if it is orientation preserving diffeomorphic to its inverse, and \emph{chiral} otherwise. The following proposition follows easily  from Theorem~\ref{thm:Lisca2} and Definition~\ref{def:reduced}.

\begin{proposition}\label{prop:lensspacegp}There is an isomorphism $$\mathcal{L} \cong \mathbb{Z}^\infty \oplus \mathbb{Z}_2^\infty.$$ A basis is given by the set of reduced lens spaces. Moreover, the $\mathbb{Z}_2^\infty$ summand is generated by reduced amphichiral lens spaces.\qed
\end{proposition}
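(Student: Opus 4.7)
The plan is to invoke Proposition~\ref{prop:reduced} to set up a bijection between classes in $\mathcal{L}$ and reduced connected sums of lens spaces, and then identify this combinatorial set with the claimed direct sum.

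First I would parameterize reduced connected sums. Condition (2) of Definition~\ref{def:reduced} forbids any pair of summands of the form $L(p,q)$ and $L(p,p-q) = -L(p,q)$ from appearing together. Call a reduced lens space $L$ \emph{chiral} if $L\not\cong -L$ (orientation-preservingly) and \emph{amphichiral} otherwise. For a chiral pair $\{L,-L\}$, the condition allows any multiplicity of only one representative, which I would encode as a signed integer; for an amphichiral $L$, the same condition forces the multiplicity into $\{0,1\}$. Thus the set of (unordered) reduced connected sums of lens spaces is in natural bijection with
$$\bigoplus_{\{L,-L\}\text{ chiral pair}}\Z \ \oplus \bigoplus_{L\text{ amphichiral}}\Z_2,$$
where both index sets run over reduced lens spaces.

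Next I would verify that, through Proposition~\ref{prop:reduced}, this bijection is a group isomorphism. The empty connected sum is the identity. Combining two reduced connected sums and then re-reducing via relation (2) of Theorem~\ref{thm:Lisca2} cancels any $L \# -L$ pair; this precisely realizes signed addition in each chiral $\Z$-factor, and for amphichirals turns the presence of two copies of $L$ into none (since $L\#L = L\# -L$ bounds a $\Q$-homology ball), matching addition mod~$2$. Nonzero amphichiral generators have order exactly two because a reduced lens space does not appear in the families of Theorem~\ref{thm:Lisca2} and therefore does not itself bound a $\Q$-homology ball.

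Lastly, to obtain $\Z^\infty \oplus \Z_2^\infty$ one must check both index sets are infinite. Proposition~\ref{prop:ln1} gives $L(m,1)$ reduced for every $m\neq 4$, and these are chiral whenever $m\neq 2$, yielding an infinite family of chiral reduced lens spaces. For amphichirals, an infinite family is obtained from primes $p \equiv 1 \pmod{4}$ together with integers $q$ satisfying $q^2 \equiv -1 \pmod{p}$, with reducedness verified directly from the explicit descriptions of $\mathcal{R}$ and the $F_n$. The main conceptual step is the combinatorial bijection in the first paragraph; the rest follows formally from Proposition~\ref{prop:reduced} and the relations in Theorem~\ref{thm:Lisca2}.
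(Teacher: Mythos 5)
Your proof is correct and follows exactly the route the paper intends: the paper states this proposition without proof, asserting it ``follows easily'' from Theorem~\ref{thm:Lisca2} and Definition~\ref{def:reduced}, and your argument is precisely that intended argument --- unique reduced representatives via Proposition~\ref{prop:reduced}, the combinatorial identification of reduced connected sums with $\bigoplus \Z \oplus \bigoplus \Z_2$ indexed by chiral pairs and amphichiral reduced lens spaces, and the infinite families (the $L(m,1)$ from Proposition~\ref{prop:ln1} and the amphichiral examples the paper itself constructs in Proposition~\ref{prop:lensspacegpmodulo}). No gaps; your observation that a basis should take one representative per chiral pair $\{L,-L\}$ is a correct and slightly more careful reading of the paper's phrase ``the set of reduced lens spaces.''
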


Using the above basis we can compute the following quotients of $\mathcal{L}$. 
\begin{proposition}\label{prop:lensspacegpmodulo} Let $p$ be a prime, and $\mathcal{L}_p =\mathcal{L} / \langle\{ L(r,s) \mid \gcd(r,p)=1 \}\rangle$. Then there is an isomorphism
$$ \mathcal{L}_p \cong 
\begin{cases}
\mathbb{Z}^\infty&\quad\text{if } p \equiv 3 \pmod{4}  \\

\mathbb{Z}^\infty \oplus \mathbb{Z}_2^\infty &\quad\text{if }p \equiv 1 \pmod{4} \text{ or } p=2.\\
\end{cases}$$
\end{proposition}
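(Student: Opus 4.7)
The plan is to lift the quotient to the basis of reduced lens spaces furnished by Proposition~\ref{prop:lensspacegp}. The first step is to verify that the subgroup $S := \langle \{ L(r,s) \mid \gcd(r,p) = 1\}\rangle$ coincides with the subgroup generated by those basis elements that are reduced lens spaces $L(r,s)$ with $\gcd(r,p) = 1$. This reduces to observing that the reduction algorithm in Proposition~\ref{prop:reduced} only ever replaces a summand $L(p',q')$ by an $L(n,\cdot)$ with $n \mid p'$, so any class represented by lens spaces coprime to $p$ has a reduced representative with the same coprimality property. Partitioning the basis $\mathcal{B}$ of reduced lens spaces as $\mathcal{B}_1 \sqcup \mathcal{B}_2$ according to whether $\gcd(r,p)=1$ or $p \mid r$, one then obtains $\mathcal{L}_p \cong \langle \mathcal{B}_2 \rangle$ as abelian groups, with chiral generators contributing $\mathbb{Z}$ summands and amphichiral generators contributing $\mathbb{Z}_2$ summands by Proposition~\ref{prop:lensspacegp}.

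For the free part, I would exhibit the infinite family $\{L(p^n,1)\}_{n \gg 0}$: each is reduced by Proposition~\ref{prop:ln1}, and each is chiral because the amphichirality condition $s^2 \equiv -1 \pmod r$ with $s = 1$ forces $r \mid 2$. This already produces a $\mathbb{Z}^\infty$ summand for every prime $p$. For the torsion part, the case $p \equiv 3 \pmod 4$ is immediate: any amphichiral $L(r,s)$ with $p \mid r$ would yield a solution of $s^2 \equiv -1 \pmod p$, which does not exist. Hence $\mathcal{B}_2$ contains no amphichiral elements and $\mathcal{L}_p \cong \mathbb{Z}^\infty$.

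When $p = 2$ or $p \equiv 1 \pmod 4$, the plan is to construct infinitely many reduced amphichiral lens spaces with $p \mid r$ by taking $L(pq, s_q)$, where $q$ ranges over primes $q \equiv 1 \pmod 4$ with $q \neq p$ (infinite by Dirichlet's theorem) and $s_q$ is chosen via the Chinese Remainder Theorem so that $s_q^2 \equiv -1 \pmod{pq}$. Amphichirality is automatic, and reducedness requires checking that $pq/s_q$ lies in neither $F_n$ for any $n$ nor in Lisca's set $\mathcal{R}$. The $F_n$ obstruction is easy: the equation $pq = m^2 n$ with $pq$ squarefree forces $m = 1$, which is incompatible with $m > k > 0$; the same analysis rules out $pq/(pq-s_q) \in F_n$. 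Membership in $\mathcal{R}$ would imply that $L(pq, s_q)$ bounds a $\mathbb{Q}$-homology ball, so the linking form on $\mathbb{Z}/pq$ would be metabolic, forcing $pq$ to be a perfect square --- a contradiction. The main obstacle I expect is precisely this second reducedness check, which hinges on extracting enough from the description of $\mathcal{R}$ in~\cite{Lisca:2007-1} to rule out $pq/s_q$. Once this is cleared, each reduced amphichiral $L(pq, s_q)$ contributes an independent $\mathbb{Z}_2$ summand to $\mathcal{L}_p$, yielding the desired $\mathbb{Z}_2^\infty$ factor.
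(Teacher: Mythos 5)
Your proof is correct. Its first half --- identifying the quotient with the span of the reduced basis elements $L(r,s)$ with $p\mid r$ via the observation that reduction preserves coprimality to $p$, extracting a $\Z^\infty$ summand from $\{L(p^n,1)\}$, and killing the torsion for $p\equiv 3\pmod 4$ by quadratic residues --- coincides with the paper's argument. Where you genuinely diverge is the construction of the $\Z_2^\infty$ summand when $p=2$ or $p\equiv 1\pmod 4$. The paper takes the explicit family $L(r_m,s_m)$ with $r_m=s_m^2+1$ (so amphichirality is built in) and then must work through condition $(3)$ of Definition~\ref{def:reduced}: it excludes $r_m/s_m\in F_n$ for $n\neq 2,4$ using the infinite order of $L(n,1)$ in $\qhom$, handles $n=2$ by a parity argument, and $n=4$ by the square-order obstruction. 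Your family $L(pq,s_q)$ with $pq$ squarefree makes both reducedness checks essentially formal: since $\gcd(m^2n,mnk+1)=1$, membership in $F_n$ forces $pq=m^2n$, hence $m=1$, contradicting $m>k>0$; and membership in $\mathcal{R}$ is excluded because $pq$ is not a square. Contrary to the worry you flag, this last step needs nothing from the internal description of $\mathcal{R}$ beyond Lisca's theorem that $r/s\in\mathcal{R}$ implies $L(r,s)$ bounds a $\Q$-homology ball, together with the Casson--Gordon square-order condition --- exactly the input the paper itself uses for its condition $(1)$, so there is no real obstacle there. The price of your route is an appeal to Dirichlet's theorem and the Chinese Remainder Theorem where the paper's family is completely explicit; what it buys is a shorter reducedness verification with no case analysis on $n$.
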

\begin{proof} By Proposition~\ref{prop:reduced} the subgroup generated by the lens spaces $L(r,s)$ such that $\gcd(r,p)=1$ is isomorphic to the subgroup generated by reduced lens spaces $L(r,s)$ where $\gcd(r,p)=1$. Then by Proposition~\ref{prop:lensspacegp}, the diffeomorphism classes of reduced lens spaces $L(p,q)$ with $\gcd(r,p)\neq 1$ provide a basis for $\mathcal{L}_p$. This quotient has a $\Z^\infty$ summand, since there are infinitely many reduced chiral lens spaces $L(r,s)$ where $\gcd(r,p)\neq 1$ (for instance, we can choose the family $\{L(p^i,1)\}_{i>2}$).

Recall that the lens space $L(r,s)$ is amphichiral if and only if $s^2 \equiv -1 \pmod{r}$. When $\gcd(r,p)\neq 1$, the equation $s^2 \equiv -1 \pmod{r}$ implies that $s^2 \equiv -1 \pmod{p}$. Moreover $-1$ is a quadratic residue modulo $p$ if and only if $p \equiv 1 \pmod{4}$ or $p=2$. Hence when $p \equiv 3 \pmod{4}$, there is no reduced amphichiral lens space $L(r,s)$ where $\gcd(r,p)\neq 1$, and we obtain the isomorphism $$\mathcal{L}_p \cong \Z^\infty.$$  

On the other hand, when $p \equiv 1 \pmod{4}$ or $p=2$, there are infinitely many such lens spaces, and we present a family for each case as follows. We first find an infinite family when $p \equiv 1 \pmod{4}$. Let $k$ be an even integer such that $p > k>0$ and $k^2 \equiv -1 \pmod{p}$. Let $r_m = \left(2mp+k\right)^2+1$ and $s_m = 2mp+k$ and consider the family of amphichiral lens spaces $\{L(r_m,s_m)\}_{m\geq 1}$. Since $k^2 \equiv -1 \pmod{p}$, we have $\gcd(r_m,p)=p$. We need to show that each lens space in this family is reduced by checking that conditions $(1), (2),$ and $(3)$ from Definition~\ref{def:reduced} are satisfied. 

Recall that the lens space $L(r,s)$ bounds a $\Q$-homology ball if and only if $r/s \in \mathcal{R}$ \cite{Lisca:2007-1} (see also Theorem~\ref{thm:Lisca2}). Furthermore, if a $\Q$-homology sphere bounds a $\Q$-homology ball, then the order of the first integral homology is a square \cite{Casson-Gordon:1978-1}. 
Condition $(1)$ is satisfied since $r_m$ is not a square of an integer and hence $r_m/s_m \not \in \mathcal{R}$; $(2)$ is automatically satisfied since we are only considering a single lens space. For $(3)$, we need to make sure that $r_m/s_m$ and $r_m/(r_m-s_m)$ are not contained in $F_n$ for any $n$. If $r/s \in F_n$ (resp.\ $r/(r-s) \in F_n$), then $L(r,s)$ is $\Q$-homology cobordant to $L(n,1)$ (resp.\ $L(n,n-1) \in F_n$) by Theorem~\ref{thm:Lisca2}. Then we see that $r_m/s_m$ and $r_m/(r_m-s_m)$ are not contained in $F_n$ for $n\neq 2,4$, since both $L(n,1)$ and $L(n,n-1)$ have infinite order in $\qhom$ by Proposition~\ref{prop:ln1} and Proposition~\ref{prop:lensspacegp}. For $n=2$, note that $r_m$ is an odd integer but the order of the first integral homology of any $\Q$-homology sphere that is $\Q$-homology cobordant to $L(2,1)$ is divisible by $2$. Lastly, for $n=4$, recall that $L(4,1)$ and $L(4,3)$ bound a $\Q$-homology ball, and we have already seen that $r_m/s_m \not \in \mathcal{R}$.

When $p=2$, let $r_m=\left(8m+3\right)^2+1$ and $s_m=8m+3$, and consider the family of amphichiral lens spaces $\{L(r_m,s_m)\}_{m\geq 1}$. It is easy to check that $\gcd(r_m,2)=2$, and using the same argument as before, we see that conditions $(1)$ and $(2)$ from Definition~\ref{def:reduced} are satisfied for each $L(r_m,s_m)$. Finally, for $(3)$ note that $L(r_m,s_m)$ is not $\Q$-homology cobordant to $L(n,1)$ for any $n$ since $2L(r_m,s_m)$ bounds a $\Q$-homology ball, and neither $r_m$ nor $2r_m$ is a square. This completes the proof.\end{proof}

Using Theorem~\ref{thm:Lisca2} we obtain an isomorphism  between the subgroup of $\mathcal{C}$ generated by $2$-bridge knots and the subgroup of $\lens$ generated by odd lens spaces. We denote by $K(p,q)$ the unique $2$-bridge knot such that $\Sigma_2 (K(p,q)) = L(p,q)$.

\begin{proposition}\label{prop:2bridgelens}
Let $\mathcal{B}$ be the subgroup of $\mathcal{C}$ generated by $2$-bridge knots. Then there exists an isomorphism
$$\beta_2{\big|}_{\mathcal{B}} \colon \mathcal{B} \rightarrow \langle\{ L(r,s) \mid r \text{ is odd} \}\rangle$$
where $\beta_{2}\colon \mathcal{C} \rightarrow \Theta_\mathbb{Q}^3$ is the homomorphism defined by taking the $2$-fold cyclic branched cover. \end{proposition}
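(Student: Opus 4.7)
The plan is to show $\beta_2|_{\mathcal{B}}$ is both surjective and injective. The map is a well-defined group homomorphism, being the restriction to a subgroup of the homomorphism $\beta_2 \colon \mathcal{C} \to \Theta_{\mathbb{Q}}^3$. For surjectivity I would note that $\langle\{L(r,s) \mid r \text{ odd}\}\rangle$ is generated by the lens spaces $L(r,s)$ with $r$ odd, each of which is the branched double cover of the 2-bridge knot $K(r,s) \in \mathcal{B}$. That the image does land inside this subgroup is because the determinant of a knot is odd, and the determinant of $K(p,q)$ equals $p$.

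For injectivity I would take $K \in \ker(\beta_2|_{\mathcal{B}})$ and plan to show $K$ is smoothly slice. Writing $K = \#_i K(p_i, q_i)$, where I use the mirror relation $-K(p,q) = K(p, p-q)$ to absorb signs, the hypothesis says $\#_i L(p_i, q_i)$ bounds a smooth $\mathbb{Q}$-homology ball. Applying Theorem~\ref{thm:Lisca2} then partitions these summands into groups each realizing one of the five listed types. Type $(3)$ does not occur: $p/q \in F_2$ forces the numerator $p = 2m^2$ to be even, contradicting the oddness of every $p_i$. I would verify, case by case, that each surviving type yields a slice (connected sum of) 2-bridge knot(s): type $(1)$ by Lisca's slice-ribbon theorem \cite{Lisca:2007-1}; type $(2)$ because $K(p,q) \# K(p,p-q)$ is tautologically $K \# -K$; and types $(4)$ and $(5)$ by the explicit ribbon disk constructions produced by Lisca for the pairs $K(p,q) \# K(n,n-1)$ and $K(p_1,q_1) \# K(p_2, p_2-q_2)$ with $p_i/q_i \in F_n$ \cite{Lisca:2007-1, Lisca:2007-2}. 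Assembling these slice pieces forces $K$ itself to be slice.

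The hardest step will be the verification for types $(4)$ and $(5)$: lifting the lens space relations coming from the family $F_n$ to smooth concordance relations between the associated 2-bridge knots. This 2-bridge knot analogue of Theorem~\ref{thm:Lisca2} is the substantive input, since the other cases reduce either to the classical slice-ribbon theorem of \cite{Lisca:2007-1} or to the tautology $K \# -K = 0$.
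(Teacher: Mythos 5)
Your proposal is correct and follows essentially the same route as the paper: reduce to injectivity, decompose a kernel element via Theorem~\ref{thm:Lisca2}, rule out case $(3)$ by parity, and handle cases $(1)$, $(2)$, $(4)$, $(5)$ by Lisca's slice-ribbon theorem, the tautology $K(p,p-q)=-K(p,q)$, and Lisca's ribbon constructions (the paper cites \cite[Lemma~3.4]{Lisca:2007-2} for the last two cases, which is exactly the ``hardest step'' you flagged). The extra verification of surjectivity and well-definedness is fine but is treated as immediate in the paper.
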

\begin{proof}
We only need to show that $\beta_2{\big|}_{\mathcal{B}}$ is injective. Let $K = \#_i K(p_i,q_i)$ be an element of $\ker (\beta_2{\big|}_{\mathcal{B}})$; then $L = \#_iL(p_i,q_i)$ bounds a $\Q$-homology ball. By Theorem~\ref{thm:Lisca2}, we have $5$ cases to examine.\\ Case $(1)$: For some $i$, $\frac{p_i}{q_i} \in \mathcal{R}$. It follows from~\cite[Theorem~1.2]{Lisca:2007-1} that $K(p_i,q_i)$ is smoothly slice. Hence we can remove the summand and iterate the argument.\\
Case $(2)$: For some $i,j$, $L(p_j,q_j) = L(p_i,p_i-q_i)$. This implies that $K(p_j,q_j) = K(p_i,p_i-q_i) = - K(p_i,q_i)$, hence we can remove this pair.\\
Case $(3)$: This case does not occur since if $\frac{p}{q} \in F_2$ then $p$ is even.\\
Cases $(4),(5)$: These are covered by \cite[Lemma~3.4]{Lisca:2007-2}
\end{proof}

The next theorem is a more precise statement for the first part of Theorem~\ref{thm:main}. We prove this in Section~\ref{sec:main'}. We use the terminology from  Definition~\ref{def:reduced}.

\begin{theorem}\label{thm:main'}
For any $Y$ in $\mathcal{L}$, let $L_Y$ be the reduced connected sum of lens spaces $\Q$-homology cobordant to $Y$. Then there is an injection
 $$H_1(L_Y; \mathbb{Z}) \hookrightarrow H_1(Y;\mathbb{Z}).$$
\end{theorem}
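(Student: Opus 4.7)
The plan follows the strategy outlined in the introduction. Assume $Y \in \mathcal{L}$ and let $W$ be a rational homology cobordism from $L_Y$ to $Y$, with $\partial W = -L_Y \sqcup Y$. Let $P$ and $P^*$ be the canonical negative definite linear plumbings bounding $L_Y$ and $-L_Y$, respectively. The first step is to form the closed $4$-manifold
$$X := P \cup_{L_Y} W \cup_Y (-W) \cup_{-L_Y} P^*,$$
and to verify that $Q_X$ is negative definite. A rational-coefficient Mayer--Vietoris computation yields the orthogonal decomposition $H_2(X;\Q) \cong H_2(P;\Q) \oplus H_2(P^*;\Q)$: the doubled cobordism $W \cup_Y (-W)$ has trivial rational $H_2$ since $W$ is a $\Q$-homology cobordism, so every rational class may be represented inside $P$ or $P^*$, and these are disjoint in $X$. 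Hence $X$ is negative definite, and Donaldson's theorem \cite{Donaldson:1987-1} identifies $(H_2(X;\Z)/\tors, Q_X) \cong (\Z^N, -\Id)$ with $N = b_2(P) + b_2(P^*)$.

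The main technical input, Proposition~\ref{prop:isomorphic}, asserts that under this identification the images of $H_2(P)$ and $H_2(P^*)$ in $\Z^N$ are mutual orthogonal complements; this rigidity statement relies on Lisca's classification of embeddings of linear plumbing lattices into the standard negative definite lattice \cite{Lisca:2007-1, Lisca:2007-2}. Standard discriminant form theory then yields
$$\Z^N / \bigl(H_2(P) \oplus H_2(P^*)\bigr) \cong H_1(L_Y;\Z).$$

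To extract the desired injection, I would apply Mayer--Vietoris to the decomposition $X = X_1 \cup_Y X_2$ with $X_1 = P \cup_{L_Y} W$ and $X_2 = (-W) \cup_{-L_Y} P^*$. Since $H_2(Y;\Z) = 0$, the sequence is exact in the form
$$0 \to H_2(X_1) \oplus H_2(X_2) \xrightarrow{\alpha} H_2(X) \to H_1(Y;\Z),$$
producing an injection $\Coker(\alpha) \hookrightarrow H_1(Y;\Z)$. Since $W$ has trivial rational $H_2$, the image of $\alpha$ in $H_2(X)/\tors \cong \Z^N$ is exactly $H_2(P) \oplus H_2(P^*)$, so $\Coker(\alpha)$ admits $H_1(L_Y;\Z)$ as a quotient via the preceding lattice computation. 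Matching the linking forms pulled back from $H_1(Y;\Z)$ and $H_1(L_Y;\Z)$ along this injection--quotient pair, one checks that $\Coker(\alpha)$ in fact contains $H_1(L_Y;\Z)$ as a subgroup, yielding the desired embedding $H_1(L_Y;\Z) \hookrightarrow H_1(Y;\Z)$.

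The main obstacle is Proposition~\ref{prop:isomorphic}: the rigidity that the two plumbing lattices must appear as mutual orthogonal complements does not follow from Donaldson's theorem alone, and requires detailed combinatorial analysis of all standard lattice embeddings of the relevant linear plumbings, building on Lisca's deep classification. The homological extraction of the injection is then essentially formal.
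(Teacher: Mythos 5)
Your overall strategy is essentially the paper's: close up with both plumbings, apply Donaldson's theorem, invoke the rigidity of the lattice embedding (the content of Proposition~\ref{prop:isomorphic}) to identify the images of $H_2(P)$ and $H_2(P^*)$ as mutual orthogonal complements in $(\Z^N,-\Id)$, and then extract the injection by homological algebra. The only structural difference is that the paper extracts the injection from the exact sequence of the pair $(P\cup_{L_Y}W,\,Y)$ following Owens--Strle, while you use Mayer--Vietoris on the closed manifold; these are equivalent in substance.

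One step, however, is not justified as written. You assert that the image of $\alpha$ in $H_2(X)/\tors\cong\Z^N$ is exactly the image of $H_2(P)\oplus H_2(P^*)$ ``since $W$ has trivial rational $H_2$.'' Rational triviality only controls ranks: a priori the image of $H_2(X_1;\Z)$ could be a finite-index overlattice of the image of $H_2(P;\Z)$, and any such enlargement would kill precisely the part of $\Coker(\alpha)$ you need to survive. The correct argument --- and the one the paper makes --- is that the image of $H_2(X_1)$ pairs trivially with the image of $H_2(P^*)$ (they are carried by disjoint pieces of $X$), hence is contained in $\bigl(\mathrm{im}\,H_2(P^*)\bigr)^{\perp}$, which by the rigidity statement equals $\mathrm{im}\,H_2(P)$; since it also contains $\mathrm{im}\,H_2(P)$, the two coincide. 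With that repair your argument goes through. Separately, the appeal to linking forms at the end is an unnecessary detour: $\Coker(\alpha)$ is finite (it embeds in $H_1(Y;\Z)$) and surjects onto $H_1(L_Y;\Z)$, and for finite abelian groups a surjection $G\twoheadrightarrow G'$ yields an injection $G'\hookrightarrow G$; this elementary fact, which the paper uses explicitly, is all that is needed.
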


We give a proof of Theorem~\ref{thm:main} assuming Theorem~\ref{thm:main'}.

\begin{proof}[Proof of Theorem~\ref{thm:main}]
For $Y$ as in Theorem~\ref{thm:main}, take $L=L_Y$ in Theorem~\ref{thm:main'}.
\end{proof}
%

\section{Lattices}\label{sec:lattices}
An \emph{integral lattice} is a pair $(G,Q)$, where $G$ is a finitely generated free abelian group and $Q$ is a $\mathbb{Z}$-valued symmetric bilinear form defined on $G$. We indicate with $(\mathbb{Z}
^N, -\Id)$ the integral lattice with the standard negative definite form. A \emph{morphism of integral lattices} is a homomorphism of abelian groups which preserves the form. We say that two integral lattices are \emph{isomorphic} if there exists a bijective morphism between them. 
To any given $4$-manifold $X$ we can associate the integral lattice $(H_2(X;\mathbb{Z})/\tors,Q_X)$, where  $Q_X$ is the intersection form on $X$.

Any lens space $L(p,q)$ arises as the boundary of a canonical negative definite plumbed $4$-manifold $P(p,q)$, which  can be described by the plumbing graph
$$
\begin{tikzpicture}[xscale=1.5,yscale=-0.5]
\node (A0_1) at (1, 0) {$-a_1$};
\node (A0_2) at (2, 0) {$-a_2$};
\node (A0_4) at (4, 0) {$-a_m$};
\node (A1_0) at (0, 1) {$\Gamma_{p,q}:=$};
\node (A1_1) at (1, 1) {$\bullet$};
\node (A1_2) at (2, 1) {$\bullet$};
\node (A1_3) at (3, 1) {$\dots$};
\node (A1_4) at (4, 1) {$\bullet$};
\path (A1_2) edge [-] node [auto] {$\scriptstyle{}$} (A1_3);
\path (A1_3) edge [-] node [auto] {$\scriptstyle{}$} (A1_4);
\path (A1_1) edge [-] node [auto] {$\scriptstyle{}$} (A1_2);
\end{tikzpicture}
$$

\noindent where the $a_i$'s are uniquely determined by the conditions $a_i\geq 2$ and
$$a_1 - \cfrac{1}{a_2 - \cfrac{1}{\ldots - \cfrac{1}{a_m}}}=\frac{p}{q}.$$

We denote the integral lattice associated with $P(p,q)$ as $(\Z\Gamma_{p,q},Q_{p,q})$, and call it the \emph{integral lattice associated with} $L(p,q)$.  The notation $\Z\Gamma_{p,q}$ is non standard. Note that we are actually taking the free abelian group generated by the vertex set of $\Gamma_{p,q}$. Since $-L(p,q)\cong L(p,p-q)$, we also obtain a \emph{dual} negative definite integral lattice $(\Z\Gamma_{p,p-q},Q_{p,p-q})$ associated with $L(p,p-q)$. We can extend the above terminology to connected sums of lens spaces. Given $\#_{i=1}^nL(p_i,q_i)$, the corresponding plumbed $4$-manifold is the boundary connected sum $\natural_{i=1}^nP(p_i,q_i)$. The associated integral lattice is
$$
\left(\Z\Gamma_{\#_{i=1}^nL(p_i,q_i)},Q_{\#_{i=1}^nL(p_i,q_i)}\right):=\bigoplus_{i=1}^n \left(\Z\Gamma_{p_i,q_i},Q_{p_i,q_i}\right).
$$

We now recall some definitions and results from~\cite{Lisca:2007-1,Lisca:2007-2}. Suppose we are given a morphism
\begin{equation}\label{eqn:morphism}
\left(\Z\Gamma_{\#_{i=1}^nL(p_i,q_i)},Q_{\#_{i=1}^nL(p_i,q_i)}\right)\hookrightarrow \left(\mathbb{Z}^N,-\Id\right),
\end{equation}
with $N=\rk(\Z\Gamma_{\#_{i=1}^nL(p_i,q_i)})$. The group $\mathbb{Z}\Gamma_{\#_{i=1}^nL(p_i,q_i)}$ is generated by the vertices of its corresponding plumbing graph $v_1,\ldots, v_N\in\mathbb{Z}^N$. Using the same notation for their image in $\mathbb{Z}^N$, we obtain a subset $S:=\{v_1,\ldots, v_N\} \subset \mathbb{Z}^N$. Most of the technical results from \cite{Lisca:2007-1,Lisca:2007-2} deal with specific properties of such subsets. Moreover, the main technical ingredient for the proof of Theorem~\ref{thm:main'} will also be stated in terms of these special subsets. This should justify the following abstract definitions from \cite{Lisca:2007-1,Lisca:2007-2}.

A subset $S=\{v_1,\ldots, v_N\} \subset \mathbb{Z}^N$ is said to be \emph{linear}, if for $i,j\in\{1,\ldots,N\}$,
$$
v_i\cdot v_j = 
\begin{cases}
-a_i\leq -2\quad& \text{if}\ i=j,\\
0\ \text{or}\ 1 \quad& \text{if}\ |i-j|=1,\\
0\quad& \text{if}\ |i-j|>1,
\end{cases}
$$
for some integers $a_i$, $i=1,\ldots,N$. Clearly, a subset $S \subset \mathbb{Z}^N$ is linear if and only if it is obtained from a morphism as in \eqref{eqn:morphism}. In particular, elements of $S$ correspond to vertices of a plumbing graph. The quantity $c(S)$ denotes the number of connected components of the associated linear graph. Also, recall the following definition:
$$I(S) := -\sum_{i=1}^N(v_i\cdot v_i +3).$$

An element $v_i \in S$ is said to be \emph{final} if the corresponding vertex has valence one. The standard basis of $(\mathbb{Z}^N,-\Id)$ is denoted by $\{e_1,\ldots, e_N\}$. We say that $e_i$ \emph{hits} $v \in \mathbb{Z}^N$ (or that $v$ hits $e_i$) if $e_i\cdot v \neq 0$. 
\begin{definition}\label{def:irreducible}
A subset $S\subset \mathbb{Z}^N$ is \emph{irreducible} if for any two elements $v, w\in S$ there
exists a finite sequence of elements of $S$
$$
v=v_0, v_1,\ldots, v_k=w, 
$$ such that for every $i=0,\ldots, k-1$ there exists some $e_{j_i}$ that hits both $v_i$ and $v_{i+1}$. A non-irreducible subset is called \emph{reducible}.
\end{definition}

Clearly, every subset $S \subset \mathbb{Z}^N$ can be written uniquely as the disjoint union of its irreducible components.

Given $e_i, v \in \mathbb{Z}^N$, we define $\pi_i(v) : = v - (v\cdot e_i)e_i$ to be \emph{the projection
of $v$ in the sub-lattice orthogonal to $e_i$}.

\begin{definition}\label{def:contraction} Let $S=\{v_1,\ldots, v_N\} \subset \mathbb{Z}^N$ be a linear subset such that $|v_i \cdot e_j| \leq 1$ for each $i,j\in\{1,\ldots,N\}$. Suppose there exist $1\leq h,s,t \leq N$ such that $e_h$ hits only $v_s$ and $v_t$ and $v_t \cdot v_t <-2$. Then, we say that the subset $S'\subseteq\langle e_1,\ldots,e_{h-1},
e_{h+1},\ldots, e_N\rangle\cong \mathbb{Z}^{N-1}$ defined by
\[
S':=\left( S\setminus\{v_s,v_t\}\right) \cup\{\pi_{h}(v_t)\}
\]
is obtained from $S$ by a \emph{contraction}, and we write $S\searrow
S'$. We also say that $S$ is obtained from $S'$ by an \emph{expansion}, and we write $S'\nearrow S$. If $v_s$ and $v_t$ are both final and $v_s\cdot v_s =-2$, we say that $S'$ is obtained from $S$ by a \emph{$-2$-final contraction} and $S$ is obtained from $S'$ by a \emph{$-2$-final expansion}.
\end{definition}

Our first step is to show that subsets originating from reduced connected sums of lens spaces can be described using only $-2$-final expansions.

\begin{proposition}\label{prop:contraction} Let $\#_{i=1}^nL(p_i,q_i)$ be a reduced connected sum of lens spaces. Then there exists a morphism
$$
(\Z\Gamma_{\#_{i=1}^nL(p_i,q_i)},Q_{\#_{i=1}^nL(p_i,q_i)})\oplus (\Z\Gamma_{\#_{i=1}^nL(p_i,p_i-q_i)},Q_{\#_{i=1}^nL(p_i,p_i-q_i)})\hookrightarrow (\mathbb{Z}^N,-\Id),$$ where $N=\rk(\Z\Gamma_{\#_{i=1}^nL(p_i,q_i)} \oplus \Z\Gamma_{\#_{i=1}^nL(p_i,p_i-q_i)})$.

Moreover, the subset $S$ associated with any such morphism can be decomposed as $S = S_1 \cup \ldots \cup S_n$ where each $S_i$ is irreducible and corresponds to a morphism 
$$
(\Z\Gamma_{p_i,q_i},Q_{p_i,q_i}) \oplus (\Z\Gamma_{p_i,p_i- q_i},Q_{p_i,p_i-q_i}) \hookrightarrow (\mathbb{Z}^{m_i},-\Id),
$$ where $m_i=\rk(\Z\Gamma_{p_i,q_i} \oplus \Z\Gamma_{p_i,p_i-q_i})$.
Finally, each $S_i$ is obtained from the subset $\{e_1+e_2,e_1-e_2\} \subset \mathbb{Z}^2$ via a sequence of $-2$-final expansions.

\end{proposition}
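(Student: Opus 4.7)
The plan is to prove the three assertions (existence, decomposition into irreducibles, and construction via $-2$-final expansions) in turn, building on Donaldson's theorem for the first and on the detailed structural analysis of linear subsets from \cite{Lisca:2007-1, Lisca:2007-2} for the latter two.

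For \textbf{existence}, I would produce a closed smooth negative-definite $4$-manifold and apply Donaldson's diagonalization theorem. Since the sum $\#_{i=1}^n L(p_i,q_i)$ is reduced, each pair $L(p_i, q_i) \# L(p_i, p_i - q_i)$ falls under case $(2)$ of Theorem~\ref{thm:Lisca2}, so
$$M := \#_{i=1}^n \bigl( L(p_i, q_i) \# L(p_i, p_i - q_i) \bigr)$$
bounds a smooth $\Q$-homology ball $W$. Capping the negative-definite plumbing $\natural_{i=1}^n \bigl(P(p_i, q_i) \natural P(p_i, p_i-q_i)\bigr)$ with $W$ along $M$ yields a closed smooth negative-definite $4$-manifold $X$ with $b_2(X) = N$, to which Donaldson's theorem \cite{Donaldson:1987-1} applies, giving $(H_2(X;\Z), Q_X) \cong (\Z^N, -\Id)$. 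Restricting this isomorphism to the plumbing part produces the required morphism.

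For the \textbf{decomposition}, I would start from any morphism as in the statement, form the corresponding linear subset $S \subset \Z^N$, and partition it into irreducible components $S = T_1 \sqcup \ldots \sqcup T_k$ in the sense of Definition~\ref{def:irreducible}. Distinct components are hit by disjoint sets of basis elements, so the $T_j$'s lie in mutually orthogonal sub-lattices of $(\Z^N,-\Id)$; moreover, since vertices adjacent in the plumbing graph must share a basis element, each $T_j$ is a union of complete plumbing chains of $\Z\Gamma_L \oplus \Z\Gamma_{L'}$. To conclude that $k = n$ and that each $T_j$ consists of exactly one pair $\{\Gamma_{p_i,q_i}, \Gamma_{p_i,p_i-q_i}\}$, I would use the lattice-theoretic analysis of \cite{Lisca:2007-2}: each $T_j$ determines an irreducible embedding of the lattice of a sub-connected-sum of lens spaces into a standard negative-definite lattice of the same rank, and the classification in Theorem~\ref{thm:Lisca2} forces this sub-sum to appear in one of the five listed families. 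The reduced hypothesis (Definition~\ref{def:reduced}) excludes types $(1), (3), (4)$, and $(5)$, leaving each $T_j$ as a single type-$(2)$ pair.

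Finally, for the \textbf{construction via expansions}, I would induct on $m_i = \rk T_i$. The base case is $L(2,1) \# L(2,1)$, where $m_i = 2$: up to a sign change of the basis, the only irreducible linear subset of $(\Z^2,-\Id)$ consisting of two vectors of self-intersection $-2$ and mutual inner product $0$ is $\{e_1 + e_2, e_1 - e_2\}$. For the inductive step with $p_i > 2$, Lisca's analysis in \cite[Section~4]{Lisca:2007-2} furnishes a basis element $e_h$ hitting exactly two final vertices $v_s, v_t$ of $S_i$ with $v_s \cdot v_s = -2$ and $v_t \cdot v_t < -2$; the resulting $-2$-final contraction produces an irreducible linear subset $S_i' \subset \Z^{m_i-1}$ representing a strictly smaller type-$(2)$ pair, to which the inductive hypothesis applies. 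The main obstacle is the second step: ruling out irreducible components that span multiple pairs or split a single pair requires a careful application of the structural results in \cite{Lisca:2007-2} together with the reduced hypothesis. Once this decomposition is established, the inductive construction in the third step is essentially a clean rewriting of Lisca's unwinding procedure.
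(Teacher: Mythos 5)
Your first step (capping the doubled plumbing with a rational ball for $L\#(-L)$ and invoking Donaldson) is exactly the paper's argument, and your third step is, in substance, what the paper extracts from \cite[Lemma~4.7]{Lisca:2007-2}. The gap is in your second step, and it is a real one. You assert that each irreducible component $T_j$ gives an embedding of the lattice of a sub-connected-sum into a standard negative definite lattice of the same rank, and that ``the classification in Theorem~\ref{thm:Lisca2} forces this sub-sum to appear in one of the five listed families.'' Theorem~\ref{thm:Lisca2} classifies connected sums of lens spaces bounding rational homology balls; the implication (lattice embedding of the right rank) $\Rightarrow$ (membership in the five families) is not its statement, and the structural results of \cite{Lisca:2007-2} that would yield information about an individual component are conditional: \cite[Proposition~4.10]{Lisca:2007-2} bounds the number of chains in a component only when the component has no bad components and negative $I$-invariant, and \cite[Lemma~4.7]{Lisca:2007-2} (which is what actually produces the $-2$-final expansion structure you want in your third step) likewise carries such hypotheses. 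Your proposal never verifies these hypotheses for any $T_j$, so neither the claim that each component consists of exactly one dual pair of chains nor the existence of the contracting basis vector $e_h$ in your inductive step is justified as written.

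The paper supplies precisely this missing bookkeeping. It first shows $b(S)=0$: by \cite[Lemma~3.2]{Lisca:2007-2} a bad component would force a summand with $p/q\in F_n$, which reducedness forbids. It then computes $I(S)=-2n$ via \cite[Lemma~2.7]{Lisca:2007-1}, so some maximal irreducible component $T_i$ has $I(T_i)<0$ and $b(T_i)=0$; Proposition~4.10 of \cite{Lisca:2007-2} then gives $c(T_i)\le 2$, the case $c(T_i)=1$ is excluded because it would produce a single lens space summand bounding a rational ball (again contradicting reducedness), and with $c(T_i)=2$ Lemma~4.7 applies to give both the identification of $T_i$ with a dual pair $L(p_i,q_i)\# L(p_i,p_i-q_i)$ and the $-2$-final expansion description. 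The remaining components are handled by induction on $|S|$ using $I(S\setminus T_i)=-2(n-1)$ and $b(S\setminus T_i)=0$. Without the invariants $I$ and $b$ and this counting argument, your decomposition step cannot rule out, for instance, a single irreducible component containing three or more of the $2n$ chains, so the claimed splitting $S=S_1\cup\dots\cup S_n$ does not follow from what you have written.
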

\begin{proof} The first assertion follows from a standard argument, which we sketch below. Since $$L:=\left(\#_{i=1}^nL(p_i,q_i)\right) \# \left(\#_{i=1}^nL(p_i,p_i-q_i)\right)\cong \left(\#_{i=1}^nL(p_i,q_i)\right) \# \left(-\#_{i=1}^nL(p_i,q_i)\right),$$
there exists a $\mathbb{Q}$-homology ball $W$ with $\partial W = L$. Let $$X := \left( \natural_{i=1}^nP(p_i,q_i)\right) \natural \left( \natural_{i=1}^nP(p_i,p_i-q_i)\right).$$ By Donaldson's diagonalization theorem~\cite{Donaldson:1987-1} the smooth closed $4$-manifold $X' = X \cup_L -W$ has standard negative definite intersection form. The inclusion $X \hookrightarrow X'$ induces the desired morphism of integral lattices (see also \cite{Lisca:2007-1}). 

From the above morphism we obtain a linear subset $S \subset \mathbb{Z}^N$. We claim that $S$ has no \emph{bad components}, i.e.\ $b(S)=0$ (see \cite[Definition 4.1]{Lisca:2007-1}\cite[Definition 4.4]{Lisca:2007-2} for the definition of a bad component). In fact, it follows from \cite[Lemma~$3.2$]{Lisca:2007-2} that a bad component corresponds to a lens space summand $L(p,q)$ with $p/q \in F_n$ for some $n\geq2$. Since we are assuming that $\#_{i=1}^nL(p_i,q_i)$ is reduced such summands do not occur. 

We can decompose $S$ as the disjoint union of its maximal irreducible components $S= \cup_{i=1}^k T_i$. It follows from \cite[Lemma~$2.7$]{Lisca:2007-1} that $I(S) =-2n$. Since $I(S)= \Sigma_{i=1}^k I(T_i)$ and $k \leq 2n$, there exists $T_i$ for some $i$ such that $I(T_i)<0$. Moreover, since $b(S)=0$, we see that $b(T_i) = 0$. It then follows from \cite[Proposition~$4.10$]{Lisca:2007-2} that $c(T_i)\leq 2$. It is implicit in \cite{Lisca:2007-1} (see also \cite[Lemma~$4.3$]{Aceto-Golla:2017-1}) that if $c(T_i)=1$ and $I(T_i)<0$ then the corresponding lens space bounds a $\mathbb{Q}$-homology ball. Since we are assuming that $\#_{i=1}^nL(p_i,q_i)$ is reduced, this is not possible, and we conclude that $c(T_i)=2$.

Now, the argument given in the proof of the main theorem in \cite{Lisca:2007-2} (more specifically the first subcase of the first case) applies to $T_i$. In particular, \cite[Lemma~$4.7$]{Lisca:2007-2} can be applied to $T_i$, and we see that $T_i$ is obtained from the subset $\{e_1+e_2,e_1-e_2\} \subset \mathbb{Z}^2$ via a sequence of $-2$-final expansions and the corresponding connected sum of lens spaces is of the form $L(p_i,q_i) \# L(p_i,p_i-q_i)$.

Finally, note that 
$$I(S \setminus T_i) = -2(n-1),\; b(S \setminus T_i)=0,\;c(S \setminus T_i)=2(n-1).$$ By induction on $|S|$, the conclusion follows.
\end{proof}

Let $\Gamma$ be a sublattice of an integral lattice $\Gamma'$. Then the orthogonal complement of $\Gamma$ in $\Gamma'$ is defined as follows,
$$ \Gamma^\perp:= \{v \in \Gamma' \mid v\cdot w =0 \text{ for all } w \in \Gamma\}.$$ For any $S \subset \mathbb{Z}^N$, the sublattice generated by $S$ is denoted by $\langle S\rangle$.

\begin{lemma}\label{lemmal:key} Let $S \subset \mathbb{Z}^N$ be a linear subset obtained from the subset $\{e_1+e_2,e_1-e_2\} \subset \mathbb{Z}^2$ via a sequence of $-2$-final expansions. Write $S=S_1 \cup S_2$ where each $S_i$ corresponds to a connected component of the graph associated to $S$. Then 
\begin{equation}\label{eqn:key}
\langle S_1 \rangle^\perp = \langle S_2 \rangle \text{ and } \langle S_2 \rangle^\perp = \langle S_1 \rangle.
\end{equation}
\end{lemma}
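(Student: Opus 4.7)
The plan is to prove $\langle S_1 \rangle^\perp = \langle S_2 \rangle$ by induction on the number $k$ of $-2$-final expansions needed to obtain $S$ from $\{e_1+e_2, e_1-e_2\}$; the symmetric identity $\langle S_2 \rangle^\perp = \langle S_1 \rangle$ will then follow by exchanging the roles of $S_1$ and $S_2$. For the base case $k=0$, a direct computation in $\mathbb{Z}^2$ shows that any $a e_1 + b e_2$ orthogonal to $e_1+e_2$ satisfies $a+b=0$, so the orthogonal complement of $\langle e_1+e_2\rangle$ is precisely $\mathbb{Z}(e_1-e_2) = \langle S_2 \rangle$.

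For the inductive step, let $S' \nearrow S$ be the last expansion; without loss of generality it affects only the first component, so $S_2 = S_2'$ while $S_1$ is obtained from $S_1'$ by replacing some $w \in S_1'$ with $v_t = w + \lambda e_h$ and adjoining a new $-2$-final vertex $v_s = \alpha e_h + \beta e_k$ adjacent to $v_t$. A short direct argument using $v_s \cdot v_s = -2$, $v_s \cdot v_t = 1$, and the fact that $v_s$ is a leaf attached only to $v_t$ shows that the two basis vectors $e_h$ and $e_k$ appearing in $v_s$ are both orthogonal to all of $\langle S'\rangle$, and that the signs $\alpha, \beta, \lambda \in \{\pm 1\}$ are constrained by the adjacency $v_s \cdot v_t = 1$. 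By the inductive hypothesis, $\langle S_1'\rangle^\perp = \langle S_2'\rangle$.

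The containment $\langle S_2 \rangle \subseteq \langle S_1\rangle^\perp$ is immediate on generators: any $v \in S_1' \setminus \{w\}$ pairs trivially with $\langle S_2'\rangle$ by induction, while each of $v_t$ and $v_s$ splits as an element of $\langle S_1'\rangle$ plus a $\mathbb{Z}$-linear combination of $e_h$ and $e_k$, with the former vanishing against $\langle S_2'\rangle$ by induction and the latter vanishing by the orthogonality of $e_h, e_k$ to $\langle S'\rangle$. For the reverse containment, take $u \in \langle S_1\rangle^\perp$ and decompose $u = u'' + \gamma e_h + \delta e_k$ with $u''$ supported on the remaining basis vectors. The conditions $u \cdot v = 0$ for $v \in S_1$ reduce to (i)~$u'' \perp (S_1'\setminus\{w\})$, (ii)~a linear relation between $u'' \cdot w$ and $\gamma$ coming from $v_t$, and (iii)~a linear relation between $\gamma$ and $\delta$ coming from $v_s$, which uniquely determines $\delta$ in terms of $\gamma$. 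Using (i) and (ii), one extracts from $u''$ an explicit $\gamma$-multiple of an auxiliary vector built from $w$ so that the adjusted vector lies in $\langle S_1'\rangle^\perp = \langle S_2'\rangle$; combining this with $\gamma e_h + \delta e_k$ and invoking (iii) then exhibits $u$ as an element of $\langle S_2 \rangle = \langle S_2' \rangle$.

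The main technical obstacle is the reverse containment — specifically, identifying the correct auxiliary vector that relates $u''$ to $\langle S_1'\rangle^\perp$ and then reassembling the pieces $\gamma e_h + \delta e_k$ so that $u$ itself lands in $\langle S_2 \rangle$. This step relies decisively on the very restrictive shape of a $-2$-final expansion: the fact that $v_s$ is a leaf supported on basis vectors orthogonal to the old subset $S'$ is precisely what decouples the new generators from the old lattice and makes the three linear constraints above separate cleanly into an inductive part and an ``expansion'' part.
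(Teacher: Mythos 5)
Your overall strategy---induction on the number of $-2$-final expansions, with the same base case---is the paper's, but your inductive step rests on a mis-identification of what a $-2$-final expansion actually does, and this is a genuine gap. An expansion $S'\nearrow S$ enlarges the ambient lattice by exactly \emph{one} basis vector; since $S'$ consists of linearly independent vectors spanning a finite-index sublattice of the old ambient lattice, only that one new basis vector can be orthogonal to $\langle S'\rangle$. So in your notation $e_h$ may be new, but $e_k$ is necessarily an \emph{old} coordinate already hit by elements of $S'$, and your claim that both $e_h$ and $e_k$ are orthogonal to $\langle S'\rangle$ is impossible. Moreover, the new leaf and the modified vertex lie in \emph{different} components: the leaf $v_s=\beta e_k+\alpha e_h$ attaches through the old coordinate $e_k$ to the end of the chain $S_2'$ (which is also why $v_s$ must be adjacent to a vertex of $S_2'$, not to $v_t$---if it were adjacent to $v_t$, then $v_t$ would generically have valence two and fail to be final, violating Definition~\ref{def:contraction}), while the vertex $w$ whose square drops by one is the final vertex of the \emph{other} chain $S_1'$, with $v_t=w+\lambda e_h$. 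One then checks $v_s\cdot v_t=0$ only because the $e_k$-contribution cancels against the $e_h$-contribution. Hence your reduction ``WLOG the expansion affects only the first component, so $S_2=S_2'$'' never occurs, and the ``decoupling'' you invoke---that the new generators pair trivially with the old lattice for free---is precisely what fails.

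As a consequence the hard part of the lemma, the reverse containments, is not actually addressed: the new leaf is entangled with $S_2'$ through $e_k$ and with $v_t$ through $e_h$, and one must correct a putative element $x$ of the orthogonal complement by an explicit multiple of $e_k$ (for $\langle S_1\rangle^{\perp}\subseteq\langle S_2\rangle$), respectively of $v_t$ (for $\langle S_2\rangle^{\perp}\subseteq\langle S_1\rangle$), before the inductive hypothesis can be applied; this is what the paper does with the auxiliary vectors $y=\widetilde{x}-\varepsilon(\widetilde{x}\cdot w)e_N$ and $\widehat{x}+cw'$. Finally, because a single expansion treats the two components asymmetrically (one gains a vertex, the other has a vertex modified), the identity $\langle S_2\rangle^{\perp}=\langle S_1\rangle$ does \emph{not} follow from $\langle S_1\rangle^{\perp}=\langle S_2\rangle$ ``by exchanging the roles of $S_1$ and $S_2$'': the induction must carry both identities simultaneously as hypotheses and establish both in the step, with genuinely different computations, as in the paper's proof. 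To repair your argument you would need to replace your model of the expansion by the correct cross-component one and then supply these two explicit correction arguments.
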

\begin{proof} We proceed by induction on the number of $-2$-final expansion. If $S =\{e_1+e_2,e_1-e_2\}$, then we can set $S_1 = \{e_1+e_2\}$ and $S_2 = \{e_1-e_2\}$. Now suppose \eqref{eqn:key} holds for some subset $S \subset \mathbb{Z}^N$ with $S = S_1 \cup S_2$. Let $S' \subset \mathbb{Z}^{N+1}$ be a subset obtained from $S$ via a single $-2$-final expansion. Write $S' = S_1' \cup S_2'$, where $S_i'$ corresponds to a connected component of the graph associated to $S'$. We may assume without loss of generality that $|S_1'| = |S_1|$ and $|S_2'| = |S_2|+1$. By abuse of notation we can think of $S = S_1\cup S_2$ as a subset of $\mathbb{Z}^{N+1}$. By definition of $-2$-final expansion, there exist two elements $w' \in S_1'$ and $v' \in S_2'$ such that $w'= w+e_{N+1}$ for some $w\in S_1$ and $v'=e_N+\varepsilon e_{N+1}$ where $\varepsilon = w\cdot e_N$. Note that $\varepsilon$ is either $1$ or $-1$. Moreover, $e_N$ only hits three vectors in $S'$ namely, $w', v',$ and one more vector in $S_2'$. 

First, we prove that $\langle S_1' \rangle^\perp = \langle S_2' \rangle$. Clearly, $\langle S_1' \rangle^\perp \supseteq \langle S_2' \rangle$ so it is enough to show that $\langle S_1' \rangle^\perp \subseteq \langle S_2' \rangle$. Suppose $x \in \langle S_1'\rangle^\perp$, then we can write $x=\widetilde{x} + a_{N+1}e_{N+1}$ where $\widetilde{x} \cdot e_{N+1}=0$. Let $y = \widetilde{x} - \varepsilon(\widetilde{x} \cdot w)e_N$, so that $$x=y+\varepsilon(\widetilde{x}\cdot w)e_N+a_{N+1}e_{N+1} = y + \varepsilon(\widetilde{x}\cdot w)v'.$$ Here we used the fact that $x\cdot w' = 0$ which implies that $a_{N+1}=\widetilde{x}\cdot w$. Since, $v'\in\langle S_2'\rangle$, it is enough to show that $y \in \langle S_2'\rangle$. In fact, we show that $y \in \langle S_2 \rangle$. Note that by the inductive hypothesis and the fact that $y$ is a linear combination of $\{ e_1, \ldots, e_N\}$, it is enough to show that $y\cdot z =0$ for all $z \in S_1$. Clearly, $y\cdot w =0$. Let $z\in S_1 \setminus \{w\}$ and note that $z$ is also an element of $S_1'$, in particular $e_N$ and $e_{N+1}$ do not hit $z$. Then we see that 
$$y\cdot z =\left(\widetilde{x}-\varepsilon(\widetilde{x}\cdot w)e_N\right)\cdot z=\widetilde{x}\cdot z = x\cdot z = 0.$$

Now, we prove that $\langle S_2' \rangle^\perp = \langle S_1' \rangle$. As before, it is clear that $\langle S_2' \rangle^\perp \supseteq \langle S_1' \rangle$. We show that $\langle S_2' \rangle^\perp \subseteq \langle S_1' \rangle$. Suppose $x \in \langle S_2'\rangle^\perp$, then we can write $x=\widetilde{x} + a_{N+1}e_{N+1}$ where $\widetilde{x} \cdot e_{N+1}=0$. Since $\langle S_2 \rangle \subset \langle S_2' \rangle$, and each vector in $S_2$ does not hit $e_{N+1}$, we see that $\widetilde{x}\cdot z = 0$ for all $z \in \langle S_2 \rangle$. By the inductive hypothesis $\widetilde{x} \in \langle S_1 \rangle$. Now, we can write $\widetilde{x} = \widehat{x} + cw$ where $\widehat{x} \in \langle S_1 \setminus \{w\} \rangle$. Since
$$x\cdot v' =\left(\widehat{x}+cw+a_{N+1}e_{N+1}\right)\cdot \left( e_N + \varepsilon e_{N+1} \right) = \varepsilon c - \varepsilon a_{N+1}=0 ,$$ we have $c=a_{N+1}$. It follows that $x$ can be written as a linear combination of two elements of $\langle S_1' \rangle$, namely,
$$x =\widetilde{x} + a_{N+1}e_{N+1} = \widehat{x} + c(w+e_{N+1}) = \widehat{x} + cw'.$$ This concludes the proof.\end{proof}

\section{Proof of Theorem~\ref{thm:main'}}\label{sec:main'}
In this section we prove Theorem~\ref{thm:main'}. Our main ingredient is the following proposition.

\begin{proposition}\label{prop:isomorphic}Let $L$ be a reduced connected sum of lens spaces, and $P$ be the canonical negative definite plumbed $4$-manifold associated to it. Let $Y$ be a $\Q$-homology sphere which is $\Q$-homology cobordant to $L$ via a cobordism $W$. Finally, let $X:=P \cup_L W$. Then the integral lattices $(H_2(P;\mathbb{Z}),Q_P)$ and $(H_2(X;\mathbb{Z}),Q_X)$ are isomorphic.
\end{proposition}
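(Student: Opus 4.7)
The plan is to consider the closed $4$-manifold
$$N := P \cup_L W \cup_Y (-W) \cup_{-L} P^*,$$
where $P^*$ denotes the canonical negative definite plumbed $4$-manifold bounded by $-L$. Because $W$ is a $\Q$-homology cobordism between $\Q$-homology spheres, $H_2(W;\Q)=0$ and $H_2(Y;\Z)=H_2(L;\Z)=0$, so repeated Mayer--Vietoris computations over $\Q$ give $H_2(N;\Q)\cong H_2(P;\Q)\oplus H_2(P^*;\Q)$ with intersection form $Q_P\oplus Q_{P^*}$. In particular, $N$ is closed and negative definite, and Donaldson's diagonalization theorem furnishes an isomorphism $(H_2(N;\Z)/\tors,\,Q_N)\cong(\Z^n,-\Id)$.

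First I would analyze the image of $H_2(P;\Z)\oplus H_2(P^*;\Z)$ inside $\Z^n$ under this embedding. Since $L$ is reduced, Proposition~\ref{prop:contraction} applies to the associated subset $S\subset\Z^n$: it decomposes as $S=S_1\cup\cdots\cup S_k$ with each $S_i$ irreducible and supported in an orthogonal summand $\Z^{m_i}\subseteq\Z^n$, and each $S_i=S_{i,1}\cup S_{i,2}$ is obtainable from $\{e_1+e_2,\,e_1-e_2\}$ via a sequence of $-2$-final expansions, where $S_{i,1}$ comes from the summand $L(p_i,q_i)$ of $L$ and $S_{i,2}$ from $L(p_i,p_i-q_i)$ of $-L$. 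Applying Lemma~\ref{lemmal:key} inside each $\Z^{m_i}$ and summing over $i$, the sublattices $\langle\bigcup_iS_{i,1}\rangle$ and $\langle\bigcup_iS_{i,2}\rangle$ are orthogonal complements of each other in $\Z^n$; in other words, $H_2(P;\Z)^\perp=H_2(P^*;\Z)$ inside $\Z^n$.

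Next I would use Mayer--Vietoris for $N=X\cup_Y X'$ with $X':=(-W)\cup_{-L}P^*$. Since $H_2(Y;\Z)=0$, the inclusions induce an injection $H_2(X;\Z)\oplus H_2(X';\Z)\hookrightarrow H_2(N;\Z)$, and because classes in $H_2(X;\Z)$ and $H_2(X';\Z)$ can be represented by closed surfaces in the interiors of the two pieces, their images in $\Z^n$ become orthogonal after quotienting by torsion. Since the inclusion $P^*\hookrightarrow X'$ realizes $H_2(P^*;\Z)$ as a sublattice of $H_2(X';\Z)$, it follows that the image of $H_2(X;\Z)/\tors$ in $\Z^n$ lies in $H_2(P^*;\Z)^\perp=H_2(P;\Z)$. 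Conversely, the inclusion $P\hookrightarrow X$, together with Mayer--Vietoris for $X=P\cup_L W$, places $H_2(P;\Z)$ inside $H_2(X;\Z)/\tors$. Combining the two inclusions inside $\Z^n$ forces the two sublattices to coincide, and the intersection forms agree by restriction of $Q_N$, yielding the desired isomorphism induced by $P\hookrightarrow X$.

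The main obstacle is the orthogonal-complement identity $H_2(P;\Z)^\perp=H_2(P^*;\Z)$ in $\Z^n$, for which reducedness is essential: it is precisely what rules out the bad components of~\cite{Lisca:2007-2} arising from summands $L(p,q)$ with $p/q\in F_n$, and thereby allows one to invoke Proposition~\ref{prop:contraction} together with Lemma~\ref{lemmal:key}. The remaining bookkeeping around possible torsion in $H_2(X;\Z)$ and $H_2(W;\Z)$ is routine, since $Q_X$ is by definition insensitive to torsion classes.
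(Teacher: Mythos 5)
Your proposal is correct and follows essentially the same route as the paper: form the closed manifold $P \cup_L W \cup_Y (-W) \cup_{-L} P^*$, apply Donaldson's diagonalization, invoke Proposition~\ref{prop:contraction} and Lemma~\ref{lemmal:key} to identify $H_2(P^*;\Z)^\perp$ with $H_2(P;\Z)$ inside $(\Z^N,-\Id)$, and sandwich $H_2(X;\Z)/\tors$ between these two sublattices. The only cosmetic difference is the order in which you establish the orthogonal-complement identity versus the containment of $H_2(X;\Z)$, which does not affect the argument.
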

\begin{proof} Let us write $L= \#_{i=1}^nL(p_i,q_i)$ and $N=\rk(\Z\Gamma_{\#_{i=1}^nL(p_i,q_i)} \oplus \Z\Gamma_{\#_{i=1}^nL(p_i,p_i-q_i)})$. Let $P^*$ be the canonical negative definite plumbing associated to $-L$ and let $$\overline{X} := P \cup_L W \cup_Y -W \cup_{-L} \cup P^*.$$ By Donaldson's diagonalization theorem \cite{Donaldson:1987-1}, $H_2(\overline{X},Q_{\overline{X}}) \cong (\mathbb{Z}^N, -\Id)$. The inclusions $P \hookrightarrow X \hookrightarrow \overline{X}$ induce morphisms $$
(\Z\Gamma_{\#_{i=1}^nL(p_i,q_i)},Q_{\#_{i=1}^nL(p_i,q_i)})\hookrightarrow (H_2(X;\mathbb{Z}),Q_X) \hookrightarrow (\mathbb{Z}^N,-\Id).$$ Moreover the inclusion $P^* \hookrightarrow \overline{X}$ induces a morphism $$
(\Z\Gamma_{\#_{i=1}^nL(p_i,p_i-q_i)},Q_{\#_{i=1}^nL(p_i,p_i-q_i)})\hookrightarrow (\mathbb{Z}^N,-\Id).$$ 

By abuse of notation we will identify all these lattices with their image in the standard lattice. Clearly, $(H_2(X;\mathbb{Z}),Q_X)$, viewed as a sublattice of $\mathbb{Z}^N$ is contained in $\left(\Z\Gamma_{\#_{i=1}^nL(p_i,p_i-q_i)},Q_{\#_{i=1}^nL(p_i,p_i-q_i)}\right)^\perp$, therefore we have $$ \left( \Z\Gamma_{\#_{i=1}^nL(p_i,q_i)}, Q_{\#_{i=1}^nL(p_i,q_i)} \right) \subseteq \left( H_2(X;\mathbb{Z}),Q_X \right) \subseteq \left(\Z\Gamma_{\#_{i=1}^nL(p_i,p_i-q_i)},Q_{\#_{i=1}^nL(p_i,p_i-q_i)}\right)^\perp.$$ In order to conclude the proof it is enough to show that $$ \left( \Z\Gamma_{\#_{i=1}^nL(p_i,q_i)}, Q_{\#_{i=1}^nL(p_i,q_i)} \right) = \left(\Z\Gamma_{\#_{i=1}^nL(p_i,p_i-q_i)},Q_{\#_{i=1}^nL(p_i,p_i-q_i)}\right)^\perp.$$ The inclusion $P\cup P^* \subset \overline{X}$ induces the morphism $$
(\Z\Gamma_{\#_{i=1}^nL(p_i,q_i)},Q_{\#_{i=1}^nL(p_i,q_i)})\oplus (\Z\Gamma_{\#_{i=1}^nL(p_i,p_i-q_i)},Q_{\#_{i=1}^nL(p_i,p_i-q_i)})\hookrightarrow (\mathbb{Z}^N,-\Id).$$
Let $S$ be the corresponding linear subset. By Proposition~\ref{prop:contraction}, we can write $\mathbb{Z}^N=\mathbb{Z}^{m_1} \oplus \cdots \oplus \mathbb{Z}^{m_n}$ and decompose the above morphism as follows $$
(\Z\Gamma_{p_i,q_i},Q_{p_i,q_i}) \oplus (\Z\Gamma_{p_i,p_i- q_i},Q_{p_i,p_i-q_i}) \hookrightarrow (\mathbb{Z}^{m_i},-\Id),
$$ where $m_i=\rk(\Z\Gamma_{p_i,q_i} \oplus \Z\Gamma_{p_i,p_i-q_i})$. Note that $S=\cup_{i=1}^n S_i$ and by Proposition~\ref{prop:contraction} each $S_i$ is obtained from the subset $\{e_1+e_2,e_1-e_2\} \subset \mathbb{Z}^2$ via a sequence of $-2$-final expansions. \\For each $i$, let $S_i = S_i^1 \cup S_i^2$ where $\langle S_i^1 \rangle = \Z\Gamma_{p_i,q_i}$ and $\langle S_i^2 \rangle = \Z\Gamma_{p_i,p_i-q_i}$. By Lemma~\ref{lemmal:key}, we have $\Z\Gamma_{p_i,q_i}= \left(\Z\Gamma_{p_i,p_i-q_i}\right)^\perp$ in $\mathbb{Z}^{m_i}$ for each $i$. 

Note that if $x \in \left(\Z\Gamma_{\#_{i=1}^nL(p_i,p_i-q_i)},Q_{\#_{i=1}^nL(p_i,p_i-q_i)}\right)^\perp$, then we can write $x = x_1 + \cdots + x_n$, where $x_i \in \mathbb{Z}^{m_i}$ for each $i$. Moreover, $x_i \in \left(\Z\Gamma_{p_i,p_i-q_i}\right)^\perp$ in $\mathbb{Z}^{m_i}$. It is clear then that $$ \left( \Z\Gamma_{\#_{i=1}^nL(p_i,q_i)}, Q_{\#_{i=1}^nL(p_i,q_i)} \right) = \left(\Z\Gamma_{\#_{i=1}^nL(p_i,p_i-q_i)},Q_{\#_{i=1}^nL(p_i,p_i-q_i)}\right)^\perp,$$ 
which concludes the proof.
\end{proof}

\begin{proof}[Proof of Theorem~\ref{thm:main'}]
Let $P$ be the canonical negative definite plumbing associated to $L_Y$. Let $W$ be a $\mathbb{Q}$-homology cobordism from $Y$ to $L_Y$ and let $X:=P \cup_L W$. Then by Proposition~\ref{prop:isomorphic} the integral lattices $(H_2(P;\mathbb{Z}),Q_P)$ and $(H_2(X;\mathbb{Z}),Q_X)$ are isomorphic.
By abuse of notation, $Q_P$ (respectively $Q_X$) denotes a matrix representation of the intersection pairing on $H_2(P;\mathbb{Z})$ (respectively $H_2(X;\mathbb{Z})/\tors$).
Note that $Q_P$ gives a presentation matrix for $H_1(L_Y; \Z)$. In particular, we have
\begin{equation}\label{eqn:first}
H_1(L_Y ; \mathbb{Z}) \cong \Z^N / Q_P(\Z^N) \cong \Z^N / Q_X(\Z^N),
\end{equation} where $N=\rk(H_2(P;\mathbb{Z}))=\rk(H_2(X;\mathbb{Z})/\tors)$. Now we claim that there exists an injection
\begin{equation}\label{eqn:second}
\Z^N / Q_X(\Z^N) \hookrightarrow H_1(Y;\Z) / T,
\end{equation} for some subgroup $T \subset  H_1(Y;\Z)$. To prove the claim we follow Owens and Strle's argument from \cite[Section~$2$]{Owens-Strle:2006-1}.

Consider the following exact sequence of the pair $(X,Y)$ with integral coefficients,

\addtolength{\arraycolsep}{-3pt}
$$\begin{array}{cccccccccccc}
0\longrightarrow&H_2(X)&\stackrel{j_*}{\longrightarrow}&H_2(X,Y)&\stackrel{\phi}\longrightarrow&H_1(Y)
&\longrightarrow&H_1(X)&\longrightarrow&H_1(X,Y)&\longrightarrow0,\\ &\|&&\|&&
&&&&&\\ &\Z^N\oplus T_2&&\Z^N\oplus T_1&& && && &
\end{array}$$ 
where $T_1$ and $T_2$ are torsion subgroups. We can choose bases for the free part of $H_2(X;\mathbb{Z})$ and $H_2(X,Y;\mathbb{Z})$ so that 
$$j_*=\left(
\begin{array}{cc}
Q_X & 0\\
* & \tau
\end{array}
\right),$$
where $\tau\colon T_2 \hookrightarrow T_1$ is an injection. Then it can be easily checked that there exists an injection from $\Z^N / Q_X(\Z^N)$ to $H_1(Y;\Z) / T$ induced by $\phi$, where $T = \phi(T_1)$.

Finally, we leave it to the reader to verify that if $G$ and $G'$ are finite abelian groups, such that there exists a surjection from  $G$ to $G'$, then there exists an injection from $G'$ to $G$. In particular, there exists an injection
\begin{equation}\label{eqn:third}
H_1(Y;\Z) / T \hookrightarrow H_1(Y;\Z).
\end{equation}
Combining \eqref{eqn:first}, \eqref{eqn:second}, \eqref{eqn:third} concludes the proof. \end{proof}

\section{Consequences of Theorem~\ref{thm:main}}\label{sec:consequences}
This section contains the proofs of some of the corollaries of Theorem~\ref{thm:main} stated in the introduction, together with some other related results. We first prove that the natural maps mentioned in the introduction have infinitely generated cokernels.

\begin{proof}[Proof of Corollary \ref{cor:AK}]Let $Y$ be an integral homology sphere that is contained in $\mathcal{L}$. By Theorem~\ref{thm:main} there exists a connected sum of lens spaces $L$ such that $H_1(L;\mathbb{Z})$ injects into $H_1(Y;\mathbb{Z}) = 0$. It follows that $H_1(L;\mathbb{Z}) = 0$ and $L = S^3$. The second part of the corollary easily follows since $\mathcal{L}$ injects into $\Coker \psi$, and it follows from Proposition~\ref{prop:lensspacegp} that $\mathcal{L} \cong \mathbb{Z}^\infty \oplus \mathbb{Z}_2^\infty$.\end{proof}

\begin{proof}[Proof of Corollary~\ref{cor:Zp}] Let $Y \in \psi_p(\Theta_{\mathbb{Z}_p}^3) \cap \mathcal{L}$. Then by Theorem~\ref{thm:main} there exists a connected sum of lens spaces $L$ such that $H_1(L;\mathbb{Z})$ injects into $H_1(Y;\mathbb{Z})$. In particular, the order of $H_1(L;\mathbb{Z})$ divides the order of $H_1(Y;\mathbb{Z})$. This implies that $L$ is a connected sum of lens spaces such that each summand has no $p$-torsion in the first integral homology group. The second part of the corollary follows from Proposition~\ref{prop:lensspacegpmodulo}.\end{proof}

Now we prove the divisibility condition for knots concordant to a connected sum of $2$-bridge knots.

\begin{proof}[Proof of Corollary~\ref{cor:2bridge}] 
Let $J'$ be a knot that is concordant to a connected sum of $2$-bridge knots. It follows that the double branched cover $\Sigma_2(J')$ is $\Q$-homology cobordant to a connected sum of lens spaces. By Theorem~\ref{thm:main}, there exists a connected sum of lens spaces $L$, such that if $Y$ is $\Q$-homology cobordant to $L$, then there is an injection $H_1(L; \mathbb{Z}) \hookrightarrow H_1(Y;\mathbb{Z}).$ In particular, $|H_1(L; \mathbb{Z})|$ divides $|H_1(\Sigma_2(J');\mathbb{Z})| = \det(J')$. Since $\det(J')$ is odd, so is $|H_1(L;\mathbb{Z})|$. Therefore $L$ is the double branched cover of a connected sum of $2$-bridge knots, say $K$. It follows from Proposition~\ref{prop:2bridgelens} that a connected sum of $2$-bridge knots is slice if and only if its branched double cover bounds a $\Q$-homology ball.

This implies that $K$ is concordant to $J'$. Now, if $J$ is concordant to $K$, then the double branched cover $\Sigma_2(J)$ is $\Q$-homology cobordant to $L$. Further, $|H_1(L; \mathbb{Z})| = \det(K)$ divides $|H_1(\Sigma_2(J);\mathbb{Z})| = \det(J)$.

Finally, suppose there is a connected sum of $2$-bridge knots $K'$ that is concordant to $K$ satisfying the same property. Then it is clear from the property of $L$ that $H_1(L; \mathbb{Z}) \cong H_1(\Sigma_2(K');\mathbb{Z})$. Then by Theorem~\ref{thm:main} there is an orientation preserving diffeomorphism from $L = \Sigma_2(K)$ to $\Sigma_2(K')$. This implies that $K$ and $K'$ are isotopic \cite{Hodgson-Rubinstein:1985-1}.
\end{proof}

We characterize which connected sums of two torus knots are concordant to a connected sum of $2$-bridge knots.

\begin{proof}[Proof of Corollary~\ref{cor:torus}] By~\cite[Corollary 1.5]{Aceto-Alfieri:2017-1} we only need to show that $T_n:= T_{3,6n+1} \# - T_{3,6n+2}$ is not concordant to a connected sum of $2$-bridge knots for any $n\geq 1$. If it is, then by Corollary~\ref{cor:2bridge} there exists a connected sum of $2$-bridge knots $K$ concordant to $T_n$ and such that $\det(K)$ divides $\det(T_n)=3$. This implies that $K$ is either the unknot, $T_{2,3}$, or $-T_{2,3}$. This is not possible, since torus knots are linearly independent in the knot concordance group~\cite{Litherland:1979-1}.\end{proof}

We can now state and prove a more general version of Corollary~\ref{cor:nonsplitta} (recall that $L(ab,1)$ is reduced if and only if $ab\neq 4$). Note that the hypothesis of our result and those from \cite{Kim-Livingston:2014-1} are disjoint. In the next proposition, the case $(a,b) = (2,2)$ has to be excluded since $L(4,1)$ is $\Q$-homology cobordant to $L(2,1) \# L(2,1)$.

\begin{proposition}\label{prop:nonsplit}Let $L(ab,r)$ be a reduced lens space where $a$ and $b$ are not relatively prime. Then $L(ab,r)$ is not $\Q$-homology cobordant to $Y_1 \# Y_2$, where $H_1(Y_1;\Z) = \Z_a$ and $H_1(Y_2;\Z) = \Z_b$. In particular, $L(ab,1)$ is not $\Q$-homology cobordant to $Y_1 \# Y_2$, where $H_1(Y_1;\Z) = \Z_a$ and $H_1(Y_2;\Z) = \Z_b$ if $a$ and $b$ are not relatively prime, and $(a,b) \neq (2,2)$.
\end{proposition}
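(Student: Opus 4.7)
The plan is to derive this directly from Theorem~\ref{thm:main'} by combining it with a short group-theoretic observation about the exponent of $\Z_a \oplus \Z_b$. I would argue by contradiction: suppose $L(ab,r)$ is $\Q$-homology cobordant to $Y_1 \# Y_2$ with $H_1(Y_1;\Z) \cong \Z_a$ and $H_1(Y_2;\Z) \cong \Z_b$. Since $L(ab,r)$ lies in $\mathcal{L}$, so does $Y_1 \# Y_2$, and because $L(ab,r)$ is reduced by hypothesis, the uniqueness clause of Proposition~\ref{prop:reduced} identifies it as the distinguished reduced representative of the common cobordism class; in particular $L_{Y_1 \# Y_2} = L(ab,r)$.

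Applying Theorem~\ref{thm:main'} to $Y = Y_1 \# Y_2$ would then yield an injection
$$\Z_{ab} \;\cong\; H_1(L(ab,r);\Z) \;\hookrightarrow\; H_1(Y_1 \# Y_2;\Z) \;\cong\; \Z_a \oplus \Z_b.$$
The contradiction comes from the elementary fact that the exponent of $\Z_a \oplus \Z_b$ equals $\mathrm{lcm}(a,b) = ab/\gcd(a,b)$. Under the hypothesis $\gcd(a,b) > 1$ this exponent is strictly smaller than $ab$, so $\Z_a \oplus \Z_b$ contains no cyclic subgroup of order $ab$, ruling out the injection. This finishes the first statement.

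For the ``in particular'' assertion, the only thing to check is that $L(ab,1)$ is actually reduced, so that the first statement is applicable. By Proposition~\ref{prop:ln1}, $L(m,1)$ is reduced if and only if $m \neq 4$; and if $\gcd(a,b) > 1$ forces $ab = 4$, then $2 \mid a$ and $2 \mid b$ gives $a = b = 2$, which is precisely the excluded case. Hence $L(ab,1)$ is reduced and the conclusion follows. There is no real obstacle in this argument: the hard work is already packaged in Theorem~\ref{thm:main'}, and this proposition is a clean corollary that converts the injection of first homology groups into a non-splitting obstruction via the mismatch between the cyclic order $ab$ and the exponent $\mathrm{lcm}(a,b)$.
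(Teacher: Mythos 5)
Your proof is correct and follows essentially the same route as the paper: apply Theorem~\ref{thm:main'} to $Y_1 \# Y_2$ to get an injection $\Z_{ab} \hookrightarrow \Z_a \oplus \Z_b$, which fails when $\gcd(a,b)>1$, and then use Proposition~\ref{prop:ln1} for the ``in particular'' clause. The paper leaves the exponent computation and the identification $L_{Y_1\#Y_2}=L(ab,r)$ implicit, whereas you spell both out; that is only a difference in level of detail, not in method.
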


\begin{proof}Suppose that $L(ab,r)$ is $\Q$-homology cobordant to $Y_1 \# Y_2$, where $H_1(Y_1;\Z) = \Z_a$ and $H_1(Y_2;\Z) = \Z_b$. By Theorem~\ref{thm:main'} there is an injection
$$H_1(L(ab,r); \mathbb{Z}) \cong \mathbb{Z}_{ab} \hookrightarrow H_1(Y_1;\mathbb{Z}) \oplus H_1(Y_2;\mathbb{Z}) \cong \mathbb{Z}_a\oplus \mathbb{Z}_b.$$
This is not possible unless $a$ and $b$ are relatively prime. By Proposition~\ref{prop:ln1}, $L(n,1)$ is a reduced lens space for each $n\neq 4$, and the second part of the statement follows.
\end{proof}

We end this section by proving Corollary~\ref{cor:filtration}, which is a generalization of Proposition~\ref{prop:nonsplit}.

\begin{proof}[Proof of Corollary~\ref{cor:filtration}] Let $\{p_i\}_{i\geq 1}$ be an infinite family of distinct primes where $p_i\neq 5$ and $p_i \equiv  1 \pmod{4}$ for each $i$. Consider the family of chiral lens spaces $\{L(p_i^{n+1},1)\}_{i\geq 1}$ and amphichiral lens spaces $\{L(5p_i^{n+1},k_i)\}_{i\geq 1}$ where $k_i^2 \equiv -1 \pmod{5p_i^{n+1}}$. Recall that $-1$ is a quadratic residue modulo an odd prime $p$ if and only if $p \equiv 1 \pmod{4}$. Moreover, it can be easily checked that for a positive integer $m$ and an odd prime $p$, if $p \equiv 1 \pmod{4}$, then $-1$ is a quadratic residue modulo $p^m$. Lastly, for prime integers $p$ and $q$, if $a \equiv  b \pmod{p}$ and  $a \equiv  b \pmod{q}$, then $a \equiv  b \pmod{pq}$. Hence such $k_i$'s exist.

We claim that any linear combination of the form
\begin{equation}\label{eqn:linearcombi} \left(\#_{i=1}^{m_1} a_iL(p^{n+1}_i,1)\right)\#\left(\#_{i=1}^{m_2} L(5p^{n+1}_i,k_i)\right)\end{equation}
is reduced. It is straightforward to see that condition $(2)$ from Definition~\ref{def:reduced} is satisfied by construction. $(1)$ and $(3)$ are also satisfied for the first summand by Proposition~\ref{prop:ln1}. Since $5p_i^{n+1}$ is not a square, $(1)$ is satisfied for the second summand. Finally, observe that $L(5p^{n+1}_i,k_i)$ is not $\Q$-homology cobordant to $L(n,1)$ for any $n$, since $2L(5p^{n+1}_i,k_i)$ bounds a $\Q$-homology ball and neither $5p^{n+1}$ nor $10p^{n+1}$ is a square, and therefore condition $(3)$ holds.

Each element from $\{L(p_i^{n+1},1)\}_{i\geq 1}$ and $\{L(5p_i^{n+1},k_i)\}_{i\geq 1}$ is contained in $\mathcal{O}_{n+1}$. Then the proof is completed by observing that any linear combination of the form shown in \eqref{eqn:linearcombi} cannot be $\Q$-homology cobordant to any element in $\mathcal{O}_{n}$ by Theorem~\ref{thm:main'}.\end{proof}

\section{Obstructions from knot Floer homology}\label{sec:applications}

Theorem~\ref{thm:main} gives a strong restriction for $\Q$-homology spheres to be contained in $\mathcal{L}$. In this section, we give further restrictions, by combining Theorem~\ref{thm:main} with obstructions coming from Heegaard Floer homology.

In order to prove Theorem~\ref{thm:B}, we need a few technical results. For a $\Q$-homology sphere $Y$, we denote by $\lambda(Y)$ its Casson-Walker invariant~\cite{Walker:1992-1}.

\begin{proposition}[\cite{Walker:1992-1}]\label{proposition:cassonwalkerrecursive} Let $p>q>0$ be relatively prime integers, then \[\pushQED{\qed}\lambda(L(p,q)) = \frac{1}{4} -\frac{p^2+q^2+1}{12pq}-\lambda(L(q,p)).\qedhere\]
\end{proposition}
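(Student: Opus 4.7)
The plan is to reduce the identity to Dedekind's classical reciprocity law for Dedekind sums, via the explicit closed-form evaluation of the Casson-Walker invariant on lens spaces.

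First, I would recall Walker's surgery formula for $\lambda$ applied to the standard description $L(p,q) = S^3_{-p/q}(U)$ of a lens space as $-p/q$ Dehn surgery on the unknot. Walker's computation yields an expression of $\lambda(L(p,q))$ in terms of the Dedekind sum
$$s(q,p) \;:=\; \sum_{k=1}^{p-1} \left(\!\!\left(\tfrac{k}{p}\right)\!\!\right)\!\left(\!\!\left(\tfrac{kq}{p}\right)\!\!\right),$$
where $((x))$ denotes the usual sawtooth function $x - \lfloor x \rfloor - \tfrac{1}{2}$ for $x \notin \mathbb{Z}$ (and $0$ otherwise). With the normalization used in the statement, this computation gives $\lambda(L(p,q)) = -\,s(q,p)$.

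Next, I would invoke Dedekind reciprocity, which states that for coprime positive integers $p$ and $q$,
$$s(q,p) + s(p,q) \;=\; \frac{p^2 + q^2 + 1}{12\, p\, q} - \frac{1}{4}.$$
Substituting $\lambda(L(p,q)) = -s(q,p)$ and $\lambda(L(q,p)) = -s(p,q)$ and rearranging produces exactly the stated identity
$$\lambda(L(p,q)) \;=\; \frac{1}{4} - \frac{p^2+q^2+1}{12 p q} - \lambda(L(q,p)).$$

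The main subtlety in carrying out this plan is bookkeeping of conventions: the Casson-Walker invariant appears in the literature with several normalizations that differ by an overall factor of $\tfrac{1}{2}$ and/or a sign, and Dedekind sums are likewise defined with varying conventions. The key checkpoint is that the constants $\tfrac{1}{4}$ and $\tfrac{1}{12pq}$ appearing in the statement match precisely those appearing in Dedekind reciprocity without any extra multiplicative factor, so once one verifies that the normalization used by the paper's cited source is indeed $\lambda(L(p,q)) = -\,s(q,p)$, the result is immediate. An alternative route, should the convention-matching be delicate, is to prove the identity by induction along the Euclidean algorithm applied to $p/q$, using the slam-dunk (Rolfsen twist) move on the surgery presentation to relate $\lambda(L(p,q))$ to $\lambda(L(q, p \bmod q))$; the induction step would reduce to the same reciprocity identity in its classical form.
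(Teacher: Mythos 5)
Your proof is correct. The paper gives no argument for this proposition at all---it is quoted directly from Walker's monograph, as signaled by the tombstone placed inside the statement---so there is no internal proof to compare against; your route via the closed-form evaluation $\lambda(L(p,q)) = -s(q,p)$ combined with Dedekind reciprocity $s(q,p)+s(p,q) = -\tfrac14 + \tfrac{p^2+q^2+1}{12pq}$ is the standard derivation and yields the stated identity verbatim. The normalization worry you flag is the right one, and it resolves in your favor: the convention $\lambda(L(p,q)) = -s(q,p)$ is forced by consistency with the rest of the paper, since the proposition itself gives $\lambda(L(3,1)) = \tfrac14 - \tfrac{11}{36} = -\tfrac1{18} = -s(1,3)$, which matches both Proposition~\ref{proposition:cassonwalkerdinvt} applied to the correction terms of $L(3,1)$ and the value $|\lambda(L(3,p))| = \tfrac1{18}$ used later in the proof of Theorem~\ref{thm:C}.
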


\begin{proposition}[{\cite[Proposition~2.4]{Rasmussen:2004-1}}]\label{proposition:cassonwalker} Let $p>q>0$ be relatively prime integers. If $$\lambda(L(p,q)) - \lambda(L(p,1)) \leq \frac{1}{4}\left(\frac{p-4}{4}\right),$$ then $q \in \{1,2,3\}$.\qed
\end{proposition}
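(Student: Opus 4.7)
The plan is to prove the contrapositive: whenever $\gcd(p,q)=1$ with $4\leq q<p$, we must have $\lambda(L(p,q))-\lambda(L(p,1))>\frac{p-4}{16}$.

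First, I would specialize Proposition~\ref{proposition:cassonwalkerrecursive} to $q=1$, using $\lambda(L(1,p))=\lambda(S^{3})=0$, to obtain the closed form $\lambda(L(p,1))=\frac{1}{4}-\frac{p^{2}+2}{12p}$. Subtracting this from the recursion applied to $\lambda(L(p,q))$, and collecting terms, yields
$$
\lambda(L(p,q))-\lambda(L(p,1))=\frac{(q-1)\bigl(p^{2}-(q-1)\bigr)}{12pq}-\lambda\bigl(L(q,p\bmod q)\bigr),
$$
where I have used $L(q,p)\cong L(q,p\bmod q)$. The first (explicit) term grows like $\frac{p(q-1)}{12q}$, which for $q\geq 4$ is comfortably larger than $\frac{p-4}{16}$ when $p$ is large; thus the heart of the proof is to control the correction $\lambda(L(q,p\bmod q))$ uniformly.

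Second, I would feed in a standard uniform bound on the Casson--Walker invariant of lens spaces of the form $|\lambda(L(a,b))|\leq a/C$ (equivalent to the classical $\tfrac{1}{12}$-bound on Dedekind sums). Plugging this into the identity above reduces the claim, for each fixed $q\geq 4$, to an elementary quadratic inequality in $p$ of the shape
$$
\frac{(q-1)\bigl(p^{2}-(q-1)\bigr)}{12pq}-\frac{q}{C}>\frac{p-4}{16},
$$
valid for all $p\geq p_{0}(q)$ with $p_{0}(q)$ explicit. The finitely many exceptional pairs $(p,q)$ with $4\leq q<p\leq p_{0}(q)$ are then cleared by direct numerical verification using the recursion.

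The main obstacle I anticipate is the regime $q\sim p$, where the leading $p/12$ contribution in the main term is only marginally larger than the Dedekind-sum correction, and the uniform bound is too crude to finish. The fallback is to iterate the recursion once, writing $p=\ell q+r$ with $0<r<q$ and applying Proposition~\ref{proposition:cassonwalkerrecursive} a second time to $\lambda(L(q,r))$, which effectively performs one step of the Euclidean algorithm on $(p,q)$. This sets up an induction on $p+q$ along the Euclidean algorithm; the delicate point is to formulate the inductive hypothesis in a strengthened form such as "$\lambda(L(p,q))-\lambda(L(p,1))\geq \alpha\, p-\beta$" with explicit constants $\alpha,\beta$ depending on $q\geq 4$, so that the surplus in the main term survives each Euclidean step and continues to dominate the accumulated corrections. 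Getting this induction to close cleanly is, I expect, the principal technical difficulty.
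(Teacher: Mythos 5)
First, a point of reference: the paper does not prove this proposition; it is quoted from Rasmussen and used as a black box, so there is no internal proof to compare your argument against. Your opening identity is correct and is the natural starting point (it is Dedekind--sum reciprocity for the Casson--Walker invariant, obtained exactly as you describe from Proposition~\ref{proposition:cassonwalkerrecursive}). However, the plan has two problems, one of which is fatal. The non-fatal one: your exceptional set is not finite. With the best uniform bound of the proposed shape, namely $|\lambda(L(a,b))|\le a/12$ (attained asymptotically by $L(a,a-1)$), the leading-order requirement becomes $p\cdot\frac{q-4}{48q}>\frac{q}{12}$, i.e.\ $p>\frac{4q^{2}}{q-4}\approx 4q+16$. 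So $p_{0}(q)$ grows linearly in $q$, for every $q\ge 5$ the range $q<p\le p_{0}(q)$ is nonempty, and the union over all $q$ of these exceptional pairs is infinite; ``direct numerical verification'' cannot clear it.

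The fatal one: the strengthened contrapositive you aim to establish by induction --- that $\lambda(L(p,q))-\lambda(L(p,1))>\frac{p-4}{16}$ for all coprime $4\le q<p$ --- is false, so no formulation of the inductive hypothesis can close. Concretely, take $(p,q)=(13,7)$: since $7\cdot 2\equiv 1\pmod{13}$ there is an orientation-preserving diffeomorphism $L(13,7)\cong L(13,2)$, hence $\lambda(L(13,7))-\lambda(L(13,1))=\frac{p^{2}-1}{24p}=\frac{7}{13}<\frac{9}{16}=\frac{p-4}{16}$, yet $7\notin\{1,2,3\}$. The same happens for every odd $p\ge 13$ with $q=(p+1)/2\equiv 2^{-1}\pmod p$, and similarly with $q\equiv 3^{-1}\pmod p$. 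The proposition is true only up to the identification $L(p,q)\cong L(p,\bar q)$ where $q\bar q\equiv 1\pmod p$ (which is how Rasmussen's statement should be read, and is harmless for the paper's uses in Lemma~\ref{lem:1} and Theorem~\ref{thm:C}, where only the diffeomorphism type of $L(p,q)$ matters). Since $\lambda(L(p,q))$ depends only on the unordered pair $\{q,\,q^{-1}\bmod p\}$, any correct argument must at some stage replace $q$ by $\min(q,\,q^{-1}\bmod p)$; your Euclidean induction treats $q$ itself as the quantity to be bounded and therefore necessarily breaks in exactly the regime $q\sim p$ that you flag as delicate --- that regime is not merely delicate, it contains genuine counterexamples to the inequality you are trying to prove.
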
 

\begin{lemma}\label{lem:1}
If $p>q>0$ are relatively prime, then $$\lambda(L(p,1)) \le \lambda(L(p,q)) \leq \lambda(L(p,p-1)).$$
\end{lemma}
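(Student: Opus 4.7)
The plan is to reduce the asserted two-sided inequality to a single lower bound and then split according to the size of $q$, using only the two propositions already recorded above. Since $L(p,p-q)$ is orientation-reversing diffeomorphic to $L(p,q)$ and the Casson--Walker invariant is odd under orientation reversal, we have $\lambda(L(p,p-q)) = -\lambda(L(p,q))$; in particular $\lambda(L(p,p-1)) = -\lambda(L(p,1))$. The upper bound $\lambda(L(p,q)) \le \lambda(L(p,p-1))$ is then equivalent to $\lambda(L(p,1)) \le \lambda(L(p,p-q))$, so it suffices to prove the lower bound
\[
\lambda(L(p,1)) \le \lambda(L(p,q))
\]
for every $q$ with $1 \le q \le p-1$ and $\gcd(p,q)=1$.

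For $q \ge 4$ (which forces $p \ge 5$), the contrapositive of Proposition~\ref{proposition:cassonwalker} gives
\[
\lambda(L(p,q)) - \lambda(L(p,1)) > \frac{p-4}{16} \ge 0,
\]
which is exactly what is needed. The case $q=1$ is trivial. For $q=2$ the integer $p$ is odd, and one application of Proposition~\ref{proposition:cassonwalkerrecursive} together with $\lambda(L(2,p)) = \lambda(L(2,1)) = 0$ yields the closed form $\lambda(L(p,2)) = -(p-1)(p-5)/(24p)$, from which a one-line arithmetic check gives $\lambda(L(p,2)) - \lambda(L(p,1)) = (p^2-1)/(24p) > 0$. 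For $q = 3$, so $\gcd(p,3)=1$, the same recursion in the two sub-cases $p \equiv 1$ and $p \equiv 2 \pmod 3$ produces closed forms whose differences with $\lambda(L(p,1))$ simplify to $(p-1)(p+2)/(18p)$ and $(p-2)(p+1)/(18p)$ respectively, both manifestly positive.

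The main obstacle is purely bookkeeping: one must verify that the small values of $p$ are all absorbed, in particular $p=4$, where $(p-4)/16 = 0$ and only $q=3 \in \{1,2,3\}$ lies outside the trivial case, so it is the direct $q=3$ computation that closes the argument there. Beyond the orientation-reversal symmetry and the two stated propositions, no further input is required.
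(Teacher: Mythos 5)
Your proof is correct and follows essentially the same route as the paper's: reduce the upper bound to the lower bound via $\lambda(Y)=-\lambda(-Y)$, dispose of $q\notin\{1,2,3\}$ with the contrapositive of Proposition~\ref{proposition:cassonwalker}, and settle $q\in\{1,2,3\}$ by direct computation with the recursion of Proposition~\ref{proposition:cassonwalkerrecursive}. You merely carry out the small-$q$ closed-form computations explicitly (and they check out), where the paper leaves them as a direct verification.
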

\begin{proof} It is straightforward to check that the inequality holds for $p=2$ and $p=3$. Suppose $p \geq 4$. If $q \not\in \{1,2,3\}$, it follows from Proposition~\ref{proposition:cassonwalker} that $\lambda(L(p,1)) \le \lambda(L(p,q))$. When $q \in \{1,2,3\}$, by direct computation using Proposition~\ref{proposition:cassonwalkerrecursive} we get $\lambda(L(p,1)) \le \lambda(L(p,q))$ for all $q$. Then the conclusion follows, since $\lambda(Y) = - \lambda(-Y)$.
\end{proof}

Given a $\Q$-homology sphere $Y$ and $\s\in \,$Spin$^c (Y)$, there is an associated spin$^c$ $\Q$-homology cobordism invariant $d(Y,\s) \in \Q$, called the correction term \cite{Ozsvath-Szabo:2003-1}. 
The Casson-Walker invariants of lens spaces can be computed using correction terms as follows. 
\begin{proposition}[{\cite[Lemma~2.2]{Rasmussen:2004-1}}]\label{proposition:cassonwalkerdinvt}
Let $p>q>0$ be relatively prime integers, then $$p\lambda(L(p,q)) = \sum_{\mathfrak{s}} d(L(p,q), \mathfrak{s}).$$ where the sum runs over all spin$^c$ structures on $L(p, q)$. \qed
\end{proposition}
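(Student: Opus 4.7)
The plan is to proceed by strong induction on $p$, pairing the Casson--Walker recursion of Proposition \ref{proposition:cassonwalkerrecursive} with the Ozsv\'ath--Szab\'o recursive formula for the correction terms of lens spaces. The base case $p = 1$ is trivial, since $L(1,q) \cong S^3$ has vanishing Casson--Walker invariant and a single correction term equal to $0$.

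For the inductive step, write $p = kq + r$ with $0 \le r < q$ and recall the Ozsv\'ath--Szab\'o recursion: for each $i \in \{0, 1, \ldots, p-1\}$ labelling a spin$^c$ structure on $L(p,q)$,
$$d(L(p,q), i) \;=\; \frac{(2i+1-p-q)^2}{4pq} - \frac{1}{4} - d(L(q, r), \tilde{i}),$$
where $\tilde{i}$ denotes the spin$^c$ structure on $L(q,r)$ prescribed by the standard surgery description of $L(p,q)$. Summing over all $p$ spin$^c$ structures on $L(p,q)$ yields
$$\sum_{\mathfrak{s}} d(L(p,q), \mathfrak{s}) \;=\; \frac{1}{4pq}\sum_{i=0}^{p-1}(2i+1-p-q)^2 \;-\; \frac{p}{4} \;-\; \sum_{i=0}^{p-1} d(L(q, r), \tilde{i}).$$

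The quadratic sum in the middle evaluates to an explicit polynomial in $p$ and $q$ by elementary algebra. For the last sum, the multiplicities with which each spin$^c$ structure on $L(q,r)$ appears as some $\tilde{i}$ are controlled by the Euclidean step $p = kq + r$, allowing the sum to be rewritten as $\sum_{\mathfrak{t}} d(L(q,r), \mathfrak{t})$ plus a combinatorial correction. Applying the inductive hypothesis to $L(q, r)$ replaces this sum by $q\,\lambda(L(q, r))$, and after dividing by $p$ I would verify by direct manipulation that the resulting expression coincides with $\lambda(L(p,q))$ as given by the recursion in Proposition \ref{proposition:cassonwalkerrecursive}.

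I expect the main obstacle to be the combinatorial bookkeeping of the multiplicities for $\tilde{i}$, combined with the algebraic check that the leftover arithmetic remainder matches the term $-(p^2+q^2+1)/(12pq)$ on the right-hand side of Proposition \ref{proposition:cassonwalkerrecursive}. This cancellation is essentially a manifestation of Dedekind reciprocity for the pair $(p,q)$; an alternative organization of the proof would bypass the induction entirely and instead deduce the identity from closed-form Dedekind-sum expressions known for both $\lambda(L(p,q))$ (via Walker) and for $\sum_{\mathfrak s} d(L(p,q),\mathfrak s)$ (via the quadratic residue formula for correction terms).
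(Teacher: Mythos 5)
The paper does not prove this statement; it is imported verbatim from Rasmussen's Lemma~2.2 (note the \qed placed directly in the statement), so there is no internal argument to compare yours against. Judged on its own terms, your main route --- strong induction pairing the two recursions --- has a concrete gap at exactly the point you defer to ``combinatorial bookkeeping.'' Writing $p = kq+r$, the reduction $i \mapsto \tilde i$ sends the $p$ spin$^c$ labels of $L(p,q)$ onto the $q$ labels of $L(q,r)$ with unequal multiplicities: $r$ of the residues occur $k+1$ times and the remaining $q-r$ occur $k$ times. Hence
$$\sum_{i=0}^{p-1} d(L(q,r),\tilde i) \;=\; k\sum_{j=0}^{q-1} d(L(q,r),j) \;+\; \sum_{j=0}^{r-1} d(L(q,r),j),$$
and only the first term is controlled by your inductive hypothesis, which concerns the \emph{total} sum of correction terms. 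The leftover partial sum $\sum_{j=0}^{r-1} d(L(q,r),j)$ runs over a proper subset of the spin$^c$ structures and is not determined by $q\,\lambda(L(q,r))$; it depends on the individual correction terms, so it is not an ``explicit arithmetic remainder.'' Comparing with the target recursion $p\lambda(L(p,q)) = \tfrac{p}{4} - \tfrac{p^2+q^2+1}{12q} - p\,\lambda(L(q,p))$, which carries the coefficient $p$ (not $q$) in front of $\lambda(L(q,p))$, you would need an identity of the shape $\sum_{j=0}^{r-1} d(L(q,r),j) = r\,\lambda(L(q,r)) + (\text{explicit correction})$ to close the induction --- a strictly stronger statement than the one you are inducting on. As written, the induction does not close.

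To repair it you must either strengthen the inductive hypothesis to a two-parameter statement controlling all partial sums $\sum_{j=0}^{m-1} d(L(q,r),j)$ (at which point you are essentially reproving Dedekind reciprocity by hand), or abandon the induction in favour of the closed-form route you mention in your last sentence: both $\lambda(L(p,q))$ and $\sum_{\mathfrak s} d(L(p,q),\mathfrak s)$ admit expressions in terms of Dedekind sums, and the identity follows by comparing them. That second route is viable and in the spirit of the cited source, but in your write-up it is only a one-sentence gesture, so the proposal does not yet contain a complete proof. Two smaller points: the base case should be the terminal step $L(1,0)\cong S^3$ of the Euclidean algorithm rather than ``$p=1$,'' which violates the standing hypothesis $p>q>0$; and the Ozsv\'ath--Szab\'o recursion is stated for labels $0\le i < p+q$, so summing it over exactly one representative of each of the $p$ spin$^c$ structures already requires some care with the identification.
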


Recall that given a knot $K \subset S^3$, there is an associated sequence of non-negative integers $\{V_i (K)\}_{i \ge 0}$, introduced by Rasmussen \cite{Rasmussen:2003-1}. Each $V_i$ is a smooth concordance invariant. Here we state some key properties of this sequence.

\begin{proposition}[{\cite[Proposition 1.6 and Remark 2.10]{Ni-Wu:2015-1}}]\label{proposition:Viformula}Let $p,q>0$ be relatively prime integers and $\ell \in \Z_p$. For any knot $K$,  \[ \pushQED{\qed}d(S^3_{p/q}(K),\ell) = d(S^3_{p/q}(U),\ell)-2\max \{V_{\lfloor\frac{\ell}{q}\rfloor}(K),V_{\lfloor\frac{p+q-1-\ell}{q}\rfloor}(K)\}. \qedhere \]
\end{proposition}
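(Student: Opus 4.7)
The natural framework is the Ozsv\'ath--Szab\'o mapping cone formula, which computes $HF^+(S^3_{p/q}(K),\mathfrak{s})$ as the homology of a mapping cone
\[
\mathbb{X}^+(K,p/q,\mathfrak{s})\colon \bigoplus_{s} A_s^+ \longrightarrow \bigoplus_{s} B_s^+,
\]
where $A_s^+ = C\{\max(i,j-s)\ge 0\}$ and $B_s^+ = C\{i\ge 0\}$ are determined by the knot Floer filtration of $K$, and where the summands contributing to a given $\mathfrak{s}$ correspond to a particular affine sublattice of $\mathbb{Z}$ of residue governed by $\ell$. My first step would be to write down this mapping cone explicitly for the spin$^c$ structure $\mathfrak{s}_\ell$ indexed by $\ell\in\mathbb{Z}_p$, using the Ozsv\'ath--Szab\'o parameterization of $\text{Spin}^c(S^3_{p/q}(K))$.

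Next, I would extract the d-invariant from the mapping cone. The key point is that $d(Y,\mathfrak{s})$ is the grading of the bottom of the $U$-nontorsion tower in $HF^+(Y,\mathfrak{s})$. In the mapping cone, this tower originates from the $B_s^+\cong\mathcal{T}^+$ factors, and it gets ``knocked down'' by the maps into the $A_s^+$ summands; the magnitude of the knock-down at index $s$ is exactly the integer $V_s(K)=h_s^+(K)$, by definition. Therefore the d-invariant of $(S^3_{p/q}(K),\mathfrak{s}_\ell)$ is determined by the positions at which the bottom-most surviving generator of the towers lives, and this is controlled by the two $A_s^+$ summands adjacent to the ``zero grading line'' in the mapping cone.

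The third step would be an explicit comparison with the unknot. For $K=U$ one has $A_s^+(U)=B_s^+(U)=\mathcal{T}^+$ and the mapping cone reduces to the standard Heegaard Floer chain complex of $L(p,q)$, so it recovers $d(S^3_{p/q}(U),\ell)$; all $V_s(U)$ vanish. Subtracting this baseline from the general case and tracking which two $A_s^+$ summands are responsible for the knock-down yields the correction $-2\max\{V_{\lfloor \ell/q\rfloor}(K),V_{\lfloor(p+q-1-\ell)/q\rfloor}(K)\}$. The two indices $\lfloor \ell/q\rfloor$ and $\lfloor(p+q-1-\ell)/q\rfloor$ appear because they label precisely the two summands in the relevant affine sublattice that meet the zero-grading line from opposite sides; the maximum arises from the fact that the outgoing map of the mapping cone pushes the bottom of the tower up by the larger of the two $V_s$-values.

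The main obstacle will be the bookkeeping: carefully matching the Ozsv\'ath--Szab\'o $\text{Spin}^c$ parameterization with the residue class $\ell\in\mathbb{Z}_p$ to verify that the two indices appearing in the formula are exactly $\lfloor \ell/q\rfloor$ and $\lfloor(p+q-1-\ell)/q\rfloor$, and confirming that no other $A_s^+$ summand in the rational-surgery mapping cone contributes to the bottom of the tower. This refinement beyond the integer-surgery formula is the new input provided by \cite{Ni-Wu:2015-1}, and it is where the genuine work lies; once it is in hand, the lens space comparison and the extraction of the d-invariant from the tower are formal.
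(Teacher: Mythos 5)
The paper does not actually prove this proposition: it is imported wholesale from Ni--Wu (their Proposition~1.6 together with Remark~2.10), and the only ``proof'' in the text is the citation. So the relevant comparison is with the argument in that reference, and your outline is indeed the same one: write down the rational-surgery mapping cone $\bigoplus_s A_s^+ \to \bigoplus_s B_s^+$ for the spin$^c$ structure labelled by $\ell$, truncate it, and read off the bottom of the $U$-nontorsion tower. Two points deserve care before this counts as a proof rather than a plan. First, the ``knock-down'' is not governed by a single family of integers: the vertical maps $v_s$ contribute $V_s$ while the horizontal maps $h_s$ contribute a second family $H_s$, and the formula as stated (with only $V$'s) requires the symmetry $H_s = V_{-s}$ coming from the conjugation symmetry of the knot complex --- this is exactly the content of Ni--Wu's Remark~2.10, and your sketch silently conflates the two families. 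Second, the identification of the two indices $\lfloor \ell/q\rfloor$ and $\lfloor (p+q-1-\ell)/q\rfloor$ (the latter being $\lceil (p-\ell)/q\rceil$) with the two extremal $A_s^+$ summands in the truncated cone is precisely the bookkeeping you defer; since that is where all the content lives, the proposal is a correct road map of the cited proof rather than an independent argument. Given that the paper treats this as a black box, reproducing the Ni--Wu computation is not expected here, but if you do carry it out you must track the $V_s$/$H_s$ distinction explicitly.
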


Here we are using the identification of $\operatorname{Spin}^c(S^3_{p/q}(K))\to\Z_p$ given in \cite{Ozsvath-Szabo:2011-1}, and $U$ denotes the unknot.

\begin{proposition}[{\cite[Proposition 7.6]{Rasmussen:2003-1}}]\label{proposition:Videcreasing}
For each $i \ge0$, \[\pushQED{\qed} V_i(K) -1 \leq V_{i+1}(K) \leq V_i(K).\qedhere\]
\end{proposition}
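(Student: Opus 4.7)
The plan is to unpack the chain-level definition of $V_i(K)$. Recall that $V_i(K) \ge 0$ is extracted from the knot Floer complex $CFK^\infty(K)$ via the family of subquotient complexes $A_i^+$: in the $(m,n)$-bigrading, $A_i^+$ is cut out by $\max(m,\,n-i) \ge 0$, and its homology carries a canonical $\mathbb{F}[U]$-tower summand. The canonical ``horizontal'' chain map $v_i^+\colon A_i^+ \to CF^+(S^3)\cong\mathcal{T}^+$ sends the generator of this tower to a power of $U$ times the generator of $\mathcal{T}^+$, and $V_i(K)$ records that power.

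First I would produce two natural comparison chain maps between $A_i^+$ and $A_{i+1}^+$. The \emph{inclusion-type} map comes from the inclusion of defining regions (the condition $n \ge i+1$ is more restrictive than $n \ge i$), giving a chain map that strictly intertwines the horizontal projections $v_i^+$ and $v_{i+1}^+$. Chasing the tower generator through this commutative square yields one of the two inequalities. A second chain map, running in the opposite direction, is obtained from translation by one unit in the $n$-coordinate; after composing with the horizontal projection this becomes multiplication by $U$, a degree $-2$ shift. Naturality of this square then gives the complementary inequality, which differs from the first by exactly one unit of $U$-multiplication.

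Putting the two comparisons together gives $V_i(K) - 1 \le V_{i+1}(K) \le V_i(K)$. The main obstacle is the bookkeeping: one must verify that the direction of each chain map, its effect on the absolute Maslov grading, and the convention used to define $V_i(K)$ all align correctly. Once that is settled, both inequalities follow immediately from naturality of the horizontal projection together with the fact that the $U$-action drops grading by $2$.
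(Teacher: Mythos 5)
The paper does not actually prove this statement: it is imported verbatim from Rasmussen's thesis (his Proposition~7.6, phrased there in terms of the local $h$-invariants), so there is no in-paper argument to compare against. Your sketch reconstructs the standard proof, and the overall strategy is correct: compare $A_i^+$ and $A_{i+1}^+$ by two chain maps, one intertwining the projections to $CF^+(S^3)$ on the nose and one intertwining them up to a single factor of $U$, then read off the two inequalities from the induced maps on the $\mathcal{T}^+$-towers. Two pieces of the bookkeeping you defer do need to be corrected. First, since the defining region of $A_{i+1}^+$ is contained in that of $A_i^+$ and both are \emph{quotient} complexes of $CFK^\infty$, the map induced by the inclusion of regions is a further quotient $p\colon A_i^+\to A_{i+1}^+$, not an inclusion; it satisfies $v_{i+1}^+\circ p=v_i^+$, which on towers gives $V_i=V_{i+1}+a$ with $a\ge 0$, i.e.\ $V_{i+1}\le V_i$. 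Second, ``translation by one unit in the $n$-coordinate'' is not a well-defined endomorphism of $CFK^\infty$ (generators $[x,m,n]$ satisfy $n-m=A(x)$, so only simultaneous shifts of both coordinates are allowed); the correct second map is the one induced by multiplication by $U$, i.e.\ $(m,n)\mapsto(m-1,n-1)$, which carries the complement of the region of $A_{i+1}^+$ into the complement of the region of $A_i^+$ and hence descends to $\phi\colon A_{i+1}^+\to A_i^+$ with $v_i^+\circ\phi=U\cdot v_{i+1}^+$; on towers this reads $V_i+b=V_{i+1}+1$ with $b\ge 0$, giving $V_i-1\le V_{i+1}$. (A cosmetic point: $v_i^+$ is conventionally the \emph{vertical} projection, onto $C\{m\ge 0\}$; the horizontal one is $h_i^+$.) With these corrections your outline is exactly Rasmussen's argument.
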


Another related concordance invariant $\nu^+\in \Z_{\ge0}$ was defined in \cite{Hom-Wu:2016-1}; it has the following property. 

\begin{proposition}[{\cite[Proposition 2.3]{Hom-Wu:2016-1}}]\label{proposition:whennu+iszero} For any knot $K$, $\nu^+(K)\geq 0$, and the equality holds if and only if $V_0(K)=0$. \qed
\end{proposition}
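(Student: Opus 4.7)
The statement has two parts: the non-negativity $\nu^+(K) \ge 0$, and the equivalence $\nu^+(K) = 0 \iff V_0(K) = 0$. The plan is to derive both directly from the definition of $\nu^+$ given in \cite{Hom-Wu:2016-1}, together with Proposition~\ref{proposition:Videcreasing}.

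Recall that $\nu^+(K)$ is defined as the smallest non-negative integer $s$ for which the canonical chain map $v^+_s \colon A^+_s \to CF^+(S^3)$ extracted from $CFK^\infty(K)$ induces a surjection on the $\F[U]$-tower of $HF^+(S^3)$. Equivalently, since $V_s(K)$ is precisely the $U$-power measuring the failure of $v^+_s$ to surject onto this tower, one has the characterization $\nu^+(K) = \min\{s \ge 0 : V_s(K) = 0\}$. With this in hand, $\nu^+(K) \ge 0$ is immediate from the definition as a minimum over non-negative integers. To see that the minimum is actually attained, I would combine Proposition~\ref{proposition:Videcreasing} (which forces $V_s$ to be non-increasing and to drop by at most one at each step) with the standard finiteness property of the knot Floer complex guaranteeing that $V_s(K) = 0$ for all sufficiently large $s$.

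Once $\nu^+(K)$ is known to be a well-defined non-negative integer, the equivalence $\nu^+(K) = 0 \iff V_0(K) = 0$ becomes tautological: $\nu^+(K)$ equals zero exactly when $0$ belongs to the set whose minimum defines it. The only mildly nontrivial ingredient is the eventual vanishing $V_s(K) = 0$ for $s \gg 0$, which is a structural consequence of the boundedness properties of $CFK^\infty(K)$ and does not require any input beyond the background references already invoked in the paper. Hence, the main (and quite minor) obstacle is simply translating between the definition of $\nu^+$ via surjectivity of $v^+_s$ and its reformulation in terms of the vanishing of $V_s$; everything else is formal.
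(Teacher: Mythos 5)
The paper offers no proof of this proposition: it is quoted from Hom--Wu with a tombstone and used as a black box, so there is no internal argument to compare against. Your sketch is essentially the standard derivation from the definition. Granting the identification $\nu^+(K)=\min\{s : V_s(K)=0\}$ (which is exactly the translation between surjectivity of $v_s^+$ onto the $U$-tower and the vanishing of $V_s$) and the eventual vanishing $V_s(K)=0$ for $s\gg 0$, the equivalence $\nu^+(K)=0\iff V_0(K)=0$ is indeed formal, and your treatment of it is correct.

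The one place you gloss over real content is the inequality $\nu^+(K)\ge 0$. You render it ``immediate'' by stating the definition as a minimum over \emph{non-negative} integers $s$, but in Hom--Wu the minimum is taken over all $s\in\Z$ --- otherwise the inequality would be vacuous and would not be recorded as a proposition. Under that definition, the content of $\nu^+(K)\ge 0$ is the strict positivity $V_s(K)>0$ for every $s<0$, and this does \emph{not} follow from Proposition~\ref{proposition:Videcreasing}, which only gives $V_{-1}(K)\ge V_0(K)\ge 0$. One needs the symmetry of the knot Floer complex, in the form $H_s(K)=V_{-s}(K)$ together with $H_s(K)-V_s(K)=s$, which yields $V_{-s}(K)=V_s(K)+s\ge s>0$ for $s>0$. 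Since the paper only cites the result, this does not affect anything downstream; but as a self-contained proof your sketch builds the only nontrivial half of the statement into the definition rather than proving it.
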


Now we are ready to prove Theorem~\ref{thm:B}.

\begin{proof}[Proof of Theorem~\ref{thm:B}]
Assume that $nS^3_{p/q}(K) \in \mathcal{L}$ for some positive integer $n$. Then by Theorem~1.1, $nS^3_{p/q}(K) \in \mathcal{L}$ is $\Q$-homology cobordant to a reduced connected sum $\#_{i=1}^{n'}L(p,q_i)$ for some integers $n'$ and $q_i$, where $n' \leq n$. 

If we call $Y=nS^3_{p/q}(K) \# (\#_{i=1}^{n'}L(p,p-q_i))$, then $Y$ bounds a $\Q$-homology ball. We can apply \cite[Proposition~4.1]{Owens-Strle:2006-1}, and deduce the existence of a metabolizer $M < H_1 (Y;\Z)$ and a spin$^c$ structure $\mathfrak{s}$ on $Y$ such that $d(Y, \mathfrak{s} + m) = 0$ for all $m \in M$. Since $p$ is a prime integer, the projection map from the metabolizer $M$ to each cyclic summand of $H_1 (Y;\Z)$ is surjective (see \cite[Corollary~3]{Kim-Livingston:2014-1}). We then consider the sum of all the correction terms for spin$^c$ structures extending to the $\Q$-homology ball. 

Using Proposition~\ref{proposition:Viformula} and the fact that the correction terms are additive under connected sums \cite[Theorem 4.3]{Ozsvath-Szabo:2003-1}, we obtain the following.
$$n \sum_{\ell = 0}^{p-1}d(S^3_{p/q}(U),\ell )  -2n\sum_{\ell = 0}^{p-1} \max \left\{ V_{\lfloor\frac{\ell}{q} \rfloor} (K), V_{\lfloor\frac{p+q-1-\ell}{q} \rfloor} (K)\right\} + \sum_{i = 0}^{n'}\sum_{\ell = 0}^{p-1} d(L(p,p-q_i), \ell) =0
$$

Recall that we are adopting the convention that $L(p,q)$ is $-p/q$ Dehn surgery on the unknot. Using Proposition~\ref{proposition:cassonwalkerdinvt} we get
\begin{equation}\label{eq:key}np \lambda(L(p,p-q))+ \sum_{i = 0}^{n'} p\lambda(L(p,p-q_i)) = 2n\sum_{\ell = 0}^{p-1} \max \left\{ V_{\lfloor\frac{\ell}{q} \rfloor} (K), V_{\lfloor\frac{p+q-1-\ell}{q} \rfloor} (K)\right\}
.\end{equation} When $p=2$, Equation~\eqref{eq:key} simplifies to $V_0(K)+V_1(K)=0$, and this implies that $V_0(K)=0$.

We can now assume that $p$ is odd. Using Proposition~\ref{proposition:Videcreasing}, it is straightforward to verify the following inequality
$$\sum_{\ell = 0}^{p-1} \max \left\{ V_{\lfloor\frac{\ell}{q} \rfloor} (K), V_{\lfloor\frac{p+q-1-\ell}{q} \rfloor} (K)\right\} \geq pV_0(K) - \frac{p^2-1}{4}.
$$
Combining the above inequality, together with the fact that $n' \leq n$, Equation~\eqref{eq:key}, and Lemma~\ref{lem:1}, we get 
$$p\lambda(L(p,p-1)) \geq pV_0(K) - \frac{p^2-1}{4}. $$
Note that by Proposition~\ref{proposition:cassonwalkerrecursive}, $\lambda(L(p,p-1))= \frac{p^2+2}{12p}-\frac{1}{4}$. Hence we get
$$\frac{p}{3} > \frac{4p^2-3p-1}{12p} \geq V_0(K)$$
which concludes the first part of the proof.

Lastly, if $q \equiv -1 \pmod{p}$, Equation~\eqref{eq:key} and Lemma~\ref{lem:1} imply that $$0\geq \sum_{\ell = 0}^{p-1} \max \left\{ V_{\lfloor\frac{\ell}{q} \rfloor} (K), V_{\lfloor\frac{p+q-1-\ell}{q} \rfloor} (K)\right\}.$$
The conclusion follows from Proposition~\ref{proposition:whennu+iszero} and the fact that $\{V_i(K)\}_{i\geq 0}$ is a sequence of non-negative integers.
\end{proof}

A $\Q$-homology sphere $Y$ is called an \emph{$L$-space} if $\rk \widehat{HF}(Y) = |H_1(Y; \Z)|$. It is known that all lens spaces are $L$-spaces. Examples of $L$-spaces which are not lens spaces include the Poincar\'e homology sphere, and branched double covers over alternating, non $2$-bridge knots. Let $\mathcal{L}'$ be the subgroup generated by $L$-spaces in $\qhom$. It is natural to ask whether the quotient $\mathcal{L}'/ \mathcal{L}$ is infinitely generated. It follows from~\cite{Aceto-Larson:2017-1} (see Corollary~\ref{cor:AK}), that the Poincar\'e homology sphere represents a non-trivial element in this quotient. Using Theorem~\ref{thm:B}, we exhibit infinitely many examples of irreducible $L$-spaces, each having infinite order in $\mathcal{L}'/ \mathcal{L}$.

\begin{corollary}\label{cor:L'/L} For any prime $p$, $S^3_{p/p-1}(T_{2,3})$ is an irreducible $L$-space that has infinite order in $\mathcal{L}' / \mathcal{L}$.
\end{corollary}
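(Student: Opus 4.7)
The plan is to verify three separate properties of $Y_p := S^3_{p/(p-1)}(T_{2,3})$: that it is an $L$-space, that it is irreducible, and that it has infinite order in $\mathcal{L}'/\mathcal{L}$. The first two are standard topological facts about surgeries on torus knots; the third will follow almost immediately from Theorem~\ref{thm:B}.

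First I would check that $Y_p \in \mathcal{L}'$. The right-handed trefoil $T_{2,3}$ is an $L$-space knot of Seifert genus $g=1$, so the surgery characterization of $L$-space knots (Ozsv\'ath--Szab\'o, Hedden) implies that positive rational surgery with slope at least $2g-1 = 1$ yields an $L$-space. Since $p/(p-1) > 1$ for every prime $p \geq 2$, the manifold $Y_p$ is an $L$-space. Next I would appeal to Moser's classification of Dehn surgeries on torus knots: $S^3_r(T_{m,n})$ is reducible if and only if $r = mn$, in which case it is the connected sum $L(m,n) \# L(n,m)$; every other non-trivial surgery produces a Seifert fibered rational homology sphere over $S^2$, which is irreducible. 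For $T_{2,3}$ the reducible slope is $6$, and $p/(p-1) = 6$ would force $5p = 6$, impossible for any integer, let alone a prime. Hence $Y_p$ is irreducible.

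For the heart of the argument I would use that an element of $\mathcal{L}'$ has infinite order in $\mathcal{L}'/\mathcal{L}$ if and only if it has infinite order in $\qhom/\mathcal{L}$, and then invoke Theorem~\ref{thm:B}. Since $q = p-1 \equiv -1 \pmod{p}$, the second conclusion of that theorem applies directly: if $Y_p$ had finite order in $\qhom/\mathcal{L}$, then $V_0(T_{2,3})$ would have to vanish. However $V_0(T_{2,3}) = 1$, which can be recovered either from the well-known correction-term computation $d(S^3_{+1}(T_{2,3})) = -2$ combined with the surgery formula $d(S^3_{+1}(K)) = -2V_0(K)$, or from the fact that for $L$-space knots $V_0$ agrees with the torsion coefficient $t_0$ read off from the Alexander polynomial $t - 1 + t^{-1}$. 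This contradiction finishes the proof. No step presents a genuine obstacle beyond verifying that the reducible slope $r = 6$ is avoided by $p/(p-1)$ for every prime $p$ and that $V_0(T_{2,3}) \neq 0$, both of which are immediate.
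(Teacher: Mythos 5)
Your argument is correct and follows essentially the same route as the paper: $L$-space-ness from the $L$-space surgery characterization for $T_{2,3}$ with slope $p/(p-1)\geq 1$, irreducibility from Moser's classification of torus knot surgeries, and infinite order in $\mathcal{L}'/\mathcal{L}$ from the second conclusion of Theorem~\ref{thm:B} (applicable since $p-1\equiv -1\pmod p$) together with $V_0(T_{2,3})=1$. The only difference is that you spell out the details the paper leaves to citations.
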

\begin{proof} 
If $p/q \geq 1$, $p/q$ Dehn surgery along $T_{2,3}$ is an $L$-space \cite{Ozsvath-Szabo:2005-1,Ozsvath-Szabo:2011-1}. Moreover, if follows from \cite{Moser:1971-1} that each $S^3_{p/p-1}(T_{2,3})$ is irreducible. The conclusion follows from Theorem~\ref{thm:B} and the fact that $V_0(T_{2,3})=1$ (see \cite{Ozsvath-Szabo:2003-1,Peters:2010-1}).
\end{proof}

Given a knot $K \subset S^3$, denote with $K_{p,q}$ the $(p,q)$-cable of $K$. In the next corollary we give some restrictions on integer surgeries on $K_{p,q}$ to be contained in $\mathcal{L}$.

\begin{corollary}\label{cor:cablev0} Let $q$ be prime and $p>1$. If $S^3_{pq}(K_{p,q})$ has finite order in $\Theta_\mathbb{Q}^3 / \mathcal{L}$, then $$0 \leq 3V_0(K) < q.$$ Furthermore, if $p \equiv -1 \pmod{q}$, then $V_0(K)=0$, or equivalently $\nu^+(K)=0$.
\end{corollary}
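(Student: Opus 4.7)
The plan is to reduce Corollary~\ref{cor:cablev0} to Theorem~\ref{thm:B} by means of the classical cabling formula for Dehn surgery on cable knots. Recall that $pq$-surgery on the $(p,q)$-cable of a knot $K$ decomposes as a connected sum with a lens space:
\[
S^3_{pq}(K_{p,q}) \;\cong\; L(p,q)\,\#\,S^3_{q/p}(K),
\]
which can be verified directly from the slam-dunk/Rolfsen moves on a standard surgery diagram for the cable, and which is consistent with Moser's description of surgeries on torus knots in the case $K = U$. I would begin the proof by recording this identity (with the sign conventions of the paper, where $L(p,q) = S^3_{-p/q}(U)$).

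Next, since $L(p,q) \in \mathcal{L}$, the hypothesis that $S^3_{pq}(K_{p,q})$ has finite order in $\Theta_\Q^3/\mathcal{L}$ immediately implies that $S^3_{q/p}(K)$ also has finite order in $\Theta_\Q^3/\mathcal{L}$. At this point the hypotheses of Theorem~\ref{thm:B} apply to $S^3_{q/p}(K)$, with the roles of the surgery numerator and denominator swapped: our $q$ plays the role of the prime ``$p$'' of Theorem~\ref{thm:B}, and our $p > 0$ plays the role of ``$q$''. The theorem then gives
\[
3V_0(K) + 1 \;\le\; q,
\]
which, combined with $V_0(K) \ge 0$, yields $0 \le 3V_0(K) < q$.

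For the second assertion, if $p \equiv -1 \pmod{q}$, this is exactly the condition ``$q \equiv -1 \pmod{p}$'' of Theorem~\ref{thm:B} after the same relabelling, and that theorem yields $V_0(K) = 0$, equivalently $\nu^+(K) = 0$ by Proposition~\ref{proposition:whennu+iszero}.

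The only genuine content beyond Theorem~\ref{thm:B} is the cabling formula, so the main (but routine) obstacle is to state it with the correct orientation/sign conventions matching the paper's convention $L(p,q) = S^3_{-p/q}(U)$; everything else is an immediate application.
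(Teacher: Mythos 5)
Your proposal is correct and is essentially identical to the paper's proof: both use Gordon's cabling formula to write $S^3_{pq}(K_{p,q})$ as a lens space connect-summed with $S^3_{q/p}(K)$, observe that finite order in $\Theta_\Q^3/\mathcal{L}$ passes to the second summand, and then apply Theorem~\ref{thm:B} with the roles of numerator and denominator swapped. The only cosmetic difference is the orientation of the lens space summand (the paper writes $L(p,p-q)$), which, as you note, is immaterial since either orientation lies in $\mathcal{L}$.
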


\begin{proof} From \cite{Gordon:1983-1}, we have $S^3_{pq}(K_{p,q}) \cong L(p,p-q)\# S^3_{q/p}(K)$. Hence $S^3_{pq}(K_{p,q})$ has finite order in $\Theta_\mathbb{Q}^3 / \mathcal{L}$ if and only if $S^3_{q/p}(K)$ has finite order in $\Theta_\mathbb{Q}^3 / \mathcal{L}$. Then the conclusion follows from Theorem~\ref{thm:B}.
\end{proof}

If we restrict to $(p,1)$-cable of knots, we get a stronger conclusion.

\begin{corollary}\label{cor:cable}
The $p$-surgery on the $(p,1)$-cable of a knot $K$ belongs to $\mathcal{L}$ if and only if the $\Z$-homology sphere $S^3_{1/p}(K)$ bounds a $\Q$-homology ball.
\end{corollary}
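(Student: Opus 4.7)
The plan is to reduce the statement to a direct consequence of Corollary~\ref{cor:AK} via Gordon's cabling formula used already in the proof of Corollary~\ref{cor:cablev0}. Specializing the identity $S^3_{pq}(K_{p,q}) \cong L(p,p-q) \# S^3_{q/p}(K)$ to $q=1$ yields
$$S^3_p(K_{p,1}) \cong L(p,p-1) \# S^3_{1/p}(K).$$
Because $L(p,p-1)$ is a lens space and hence lies in $\mathcal{L}$, the first manifold on the right belongs to $\mathcal{L}$, so
$$S^3_p(K_{p,1}) \in \mathcal{L} \iff S^3_{1/p}(K) \in \mathcal{L}.$$

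Next, I would observe that $S^3_{1/p}(K)$ is a $\Z$-homology sphere: the surgery coefficient has numerator $1$, so $H_1(S^3_{1/p}(K);\Z)=0$. Consequently, the class of $S^3_{1/p}(K)$ in $\Theta_\Q^3$ lies in the image of the natural map $\psi\colon \Theta_\Z^3 \to \Theta_\Q^3$. Applying Corollary~\ref{cor:AK}, which says $\psi(\Theta_\Z^3)\cap \mathcal{L} = 0$, we conclude that $S^3_{1/p}(K)\in\mathcal{L}$ precisely when $S^3_{1/p}(K)$ represents the trivial class in $\Theta_\Q^3$, i.e.\ when it bounds a $\Q$-homology ball.

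Chaining the two equivalences gives the claim. There is essentially no obstacle in this argument; the only small point to check carefully is that the orientation conventions from Gordon's formula (as already used in the proof of Corollary~\ref{cor:cablev0}) are consistent with the lens space convention of this paper, so that $L(p,p-1)$ does indeed end up as a summand of $S^3_p(K_{p,1})$ and is an element of $\mathcal{L}$ in its own right. Once that is confirmed, the corollary follows in a few lines from Corollary~\ref{cor:AK} without needing any further input from Theorem~\ref{thm:main} or from Heegaard Floer homology.
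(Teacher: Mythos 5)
Your proposal is correct and follows essentially the same route as the paper: Gordon's cabling formula gives $S^3_p(K_{p,1}) \cong L(p,p-1)\# S^3_{1/p}(K)$, which reduces membership in $\mathcal{L}$ to that of the $\Z$-homology sphere $S^3_{1/p}(K)$, and Corollary~\ref{cor:AK} then forces that class to be trivial. The extra care you take in spelling out the subgroup argument and the orientation conventions is fine but not a departure from the paper's proof.
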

\begin{proof}

As before $S^3_{p}(K_{p,1}) \cong L(p,p-1)\# S^3_{1/p}(K)$, and $S^3_{p}(K_{p,1})$ is contained in $\mathcal{L}$ if and only if $S^3_{1/p}(K)$ is contained in $\mathcal{L}$. Then the conclusion follows from Corollary~\ref{cor:AK}.
\end{proof}

We recall one more lemma before the proof of Theorem~\ref{thm:C}. 
\begin{lemma}[{\cite[Lemma~4.5]{Aceto-Golla:2017-1}}]\label{lemma:inequalond}If $p>q>0$ are relatively prime, then \[\pushQED{\qed}4|d(L(p,q),\ell)| \leq p-1 \text{ for any } \ell \in \Z_p. \qedhere \] \end{lemma}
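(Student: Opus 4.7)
The plan is to argue by strong induction on $p$, using the Ozsv\'ath--Szab\'o recursive formula for correction terms of lens spaces (Proposition~4.8 of \cite{Ozsvath-Szabo:2003-1}). The base case $p = 1$ is trivial since $L(1,0) = S^3$ has $d = 0$, matching $(p-1)/4 = 0$. For $p = 2$, one checks directly from the $-2$ framed disk-bundle plumbing that $d(L(2,1), \cdot) \in \{-1/4, 1/4\}$, so $4|d| = 1 = p-1$; tightness on the family $\{L(p,1) \mid p \geq 2\}$ should also emerge from the induction, providing a useful consistency check.

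For the inductive step, fix coprime integers $p > q > 0$ and set $r \equiv p \pmod{q}$ with $0 \leq r < q$. Given a spin$^c$ structure $\ell$ on $L(p,q)$, the Ozsv\'ath--Szab\'o recursion provides an index $i \in \{0, \ldots, p+q-1\}$ representing $\ell$ together with a corresponding spin$^c$ structure $\ell'$ on $L(q, r)$ such that
$$
d(L(p,q), \ell) \;=\; \pm\left(\frac{1}{4} - \frac{(2i+1-p-q)^2}{4pq}\right) \;\pm\; d(L(q, r), \ell').
$$
Applying the inductive hypothesis to $L(q, r)$ (where $q < p$) gives $|d(L(q, r), \ell')| \leq (q-1)/4$, so
$$
|d(L(p,q), \ell)| \;\leq\; \left|\frac{1}{4} - \frac{(2i+1-p-q)^2}{4pq}\right| + \frac{q-1}{4}.
$$
The induction then closes as soon as we exhibit a representative $i$ of $\ell$ satisfying $(2i+1-p-q)^2 \leq pq(p-q+1)$, because then the first summand is at most $(p-q)/4$, and the two bounds add to $(p-1)/4$.

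The main obstacle is this last combinatorial step: for every spin$^c$ structure $\ell$ one must produce an admissible index $i$ achieving the required bound on $|2i+1-p-q|$. Since the $p+q$ admissible indices cover only $p$ spin$^c$ structures, each class typically admits two representatives, related by the symmetry coming from the identification $\Spinc(L(p,q)) \cong \Z/p$; the task reduces to verifying that the minimum of $|2i+1-p-q|$ over the representatives of a given $\ell$ never exceeds $\sqrt{pq(p-q+1)}$. This amounts to a careful case analysis, to be combined with the conjugation symmetry $\ell \mapsto \bar\ell$ (under which $d$ is invariant) and the orientation-reversal involution $L(p,q) \leftrightarrow L(p, p-q)$ (under which $d$ changes sign), and it is the only nontrivial ingredient. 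Once this uniform bound on representatives is established, the induction closes and the lemma follows.
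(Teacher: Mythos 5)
The paper does not actually prove this lemma: it is quoted verbatim from Aceto--Golla, so there is no in-paper argument to compare against. Your strategy --- strong induction on $p$ via the Ozsv\'ath--Szab\'o recursion for lens-space correction terms --- is the natural one and does work, but as written the proposal stops exactly where the content is: you reduce everything to producing, for each class $\ell$, a representative $i\in\{0,\dots,p+q-1\}$ with $(2i+1-p-q)^2\le pq(p-q+1)$, and then defer this as ``a careful case analysis'' without carrying it out. Since that inequality is the entire substance of the induction step, what you have is a correct plan rather than a proof.

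The good news is that the missing step takes two lines, not a case analysis. A class $\ell\in\{q,\dots,p-1\}$ has the single representative $i=\ell$, for which $|2i+1-p-q|\le p-q-1<p$. A class $\ell\in\{0,\dots,q-1\}$ has the two representatives $i=\ell$ and $i=\ell+p$, for which $2i+1-p-q$ equals $-(p+q-1-2\ell)$ and $p-q+1+2\ell$ respectively; these two absolute values sum to $2p$, so the smaller is at most $p$. Hence in every case one may choose $i$ with $(2i+1-p-q)^2\le p^2$, and $p^2\le pq(p-q+1)$ is equivalent to $(q-1)(p-q)\ge 0$, which holds for all coprime $p>q>0$. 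Inserting this closes your induction (and shows the bound is tight exactly along $L(p,1)$, matching your consistency check). Two small points to tidy: choosing the better representative $i$ changes the index $j$ fed into $d(L(q,r),\cdot)$, which is harmless because the inductive hypothesis is uniform in $j$; and the true base of the recursion is $q=1$, where the residual term is $d(S^3)=0$, rather than a ``$p=1$'' case of the lemma, since the statement assumes $p>q>0$.
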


\begin{proof}[Proof of Theorem~\ref{thm:C}]

We start with the left-most inequality; if $S^3_p (K) \in \lens$, then by Theorem \ref{thm:main}, $S^3_p (K)\# L(p,q)$ bounds a $\Q$-homology ball for some $p> q>0$. Using an argument similar to the one given in the proof of Theorem~\ref{thm:B}, we can find a spin$^c$ structure on the $\Q$-homology ball that restricts to a spin$^c$ structure on $S^3_p (K)\# L(p,q)$ that corresponds to $(0, \ell) \in \Z_p \oplus \Z_p$, for some $\ell$. Hence using Proposition~\ref{proposition:Viformula} we get
$$d(S^3_p (K)\# L(p,q),(0,\ell)) = d(S^3_p (U), 0) - 2V_0(K) + d(L(p,q),\ell)=0.$$ Now, we get the desired inequality by applying Lemma~\ref{lemma:inequalond}.

Now we prove the right inequality.  Assume by contradiction that $ g_4(K) +1 \le \frac{p}{4}$; by Theorem~\ref{thm:main}, $S^3_p(K) \# L(p,q)$ is trivial in $\qhom$, for some $p> q>0$. Using the same line of reasoning as in the proof of Theorem~\ref{thm:B}, this implies that 
 \begin{equation}\label{eqn:sumofcorrection}
 \sum_{\ell = 0}^{p-1} \left( d(S^3_p(K),\ell) + d(L(p,q),\ell)\right) = 0,
 \end{equation}
hence 
 \begin{equation}\label{eqn:sumofcorrection2}
 p\lambda(L(p,p-1)) +p\lambda(L(p,q)) = 2(V_0(K) + 2V_1 (K) + \ldots + 2V_{\frac{p-1}{2}}(K)).
 \end{equation}
By~\cite[Theorem~2.3]{Rasmussen:2004-1}, the right-hand side is less than or equal to $g_4(K) (g_4(K) +1)$.

Since we are assuming $ g_4(K) +1 \le \frac{p}{4}$, Equation~\eqref{eqn:sumofcorrection2} implies that $\lambda(L(p,q)) - \lambda(L(p,1)) \le \frac{1}{4}\left( \frac{p}{4}-1\right)$. By Proposition~\ref{proposition:cassonwalker}, we see that $q \in \{1,2,3\}$. We examine these three cases separately.

If $q = 1$, Equation~\eqref{eqn:sumofcorrection2} implies that $\nu^+(K) = 0$, which contradicts our hypothesis. Using the fact from \cite[Corollary~3]{Kim-Livingston:2014-1} that the metabolizer surjects onto each cyclic summand of $H_1 (S^3_p(K) \# L(p,q);\Z)$, we obtain $$\{ d(S^3_p(K),\s) \}_{\s \in \text{Spin}^c(S^3_p(K))} = \{- d(L(p,q),\s) \}_{\s \in \text{Spin}^c(L(p,q))} .$$
It then follows from the proof of Proposition~2.5 in~\cite{Rasmussen:2004-1}, that if $q=2$ then $p \le 7$, and  if $q = 3$ then $p \le 13$. If $q = 2$ we obtain a contradiction, since $$8 \leq 4\nu^+(K) + 4 \leq 4g_4(K)+4 \leq p.$$ Finally, if $q = 3$, we see that $1\le g_4(K)\le2$. Then, by Equation~\eqref{eqn:sumofcorrection2} we have $$\lambda(L(p,p-1)) + \lambda(L(p,3)) \le\frac{g_4(K)(g_4(K) +1)}{p}.$$ Using Proposition~\ref{proposition:cassonwalkerrecursive}, the left-hand side can be written as 
$\frac{3p^2 -4}{36p} - \lambda(L(3,p)).$ Since $ |\lambda(L(3,p))| = \frac{1}{18}$, we obtain
\begin{equation}\label{eqn:gstarandp}
 \frac{p^2 - p -2 }{18} \le g_4(K)(g_4(K) +1).
\end{equation}
It is easy to see that Equation~\eqref{eqn:gstarandp} has no solutions when $1\le g_4(K)\le2$.
\end{proof}

Theorem~\ref{thm:C} implies that a knot $K$ such that $g_4(K) = V_0(K)$,  admits at most $2$ prime surgeries in $\lens$. In general, all such primes must be contained in an interval of length $4(g_4(K) - V_0(K)) +2$. It would be interesting to see whether similar results hold for non-prime surgery coefficients.

\begin{corollary}\label{cor:gstarobstruction}
Let $p$ be a prime and $K$ an $L$-space knot. If $p \ge 4g_4(K) + 4$, then $S^3_p (K) \in \mathcal{L}^\prime \setminus \lens$.
\end{corollary}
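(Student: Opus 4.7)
The plan is to verify the two memberships in $\mathcal{L}^\prime$ and $\qhom \setminus \lens$ separately. For the containment in $\mathcal{L}^\prime$ I would use the well-known fact that for a non-trivial L-space knot $K$ the surgery $S^3_p(K)$ is an L-space for every integer $p \geq 2g_3(K) - 1$. Since the paper has already recorded that $g_3(K) = g_4(K)$ for L-space knots via~\cite[Corollary~1.6]{Ozsvath-Szabo:2005-1}, the hypothesis $p \geq 4g_4(K) + 4$ amply guarantees $p \geq 2g_3(K) - 1$, so $S^3_p(K)$ is an L-space and hence lies in $\mathcal{L}^\prime$.

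For the non-membership in $\lens$ I would apply Theorem~\ref{thm:C}, which is the real content. To use it I must check the hypothesis $\nu^+(K) \neq 0$; by Proposition~\ref{proposition:whennu+iszero} this is equivalent to $V_0(K) > 0$, and for any non-trivial L-space knot this is automatic, since the structure of $HF^-$ for L-space knots from~\cite{Ozsvath-Szabo:2005-1} forces $V_0(K) \geq 1$ whenever $g(K) \geq 1$. Once $\nu^+(K) \neq 0$ is in hand, Theorem~\ref{thm:C} asserts that $S^3_p(K) \in \lens$ would force $p \leq 4g_4(K) + 3$, directly contradicting the hypothesis $p \geq 4g_4(K) + 4$. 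Hence $S^3_p(K) \notin \lens$, which combined with the previous paragraph gives $S^3_p(K) \in \mathcal{L}^\prime \setminus \lens$.

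The argument is essentially a one-line packaging of Theorem~\ref{thm:C} applied to a setting where its hypotheses are automatic, so I do not anticipate any serious obstacle; the work has all been done in proving that theorem. The only minor caveat is the implicit non-triviality of $K$: for $K = U$ one has $S^3_p(U) = L(p,1) \in \lens$ when $p$ is a prime other than $4$, so, as in Corollary~\ref{cor:L'/L}, the statement should be read as being about non-trivial L-space knots.
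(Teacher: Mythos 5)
Your proof is correct and follows essentially the same route as the paper: the containment in $\mathcal{L}'$ comes from the large-surgery $L$-space condition $p\ge 2g(K)-1$ for $L$-space knots, and the exclusion from $\lens$ is a direct application of Theorem~\ref{thm:C}. You are somewhat more careful than the paper's one-line proof in explicitly verifying the hypothesis $\nu^+(K)\neq 0$ (via $V_0(K)\ge 1$ for non-trivial $L$-space knots) and in flagging that $K$ must implicitly be non-trivial; both points are correct and worth noting, but they do not change the argument.
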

\begin{proof}
The result follows from Theorem \ref{thm:C} and the fact that for an $L$-space knot, every surgery coefficient greater than $2g_4(K) -1$ is an $L$-space \cite{Ozsvath-Szabo:2005-1,Ozsvath-Szabo:2011-1}.
\end{proof}

\bibliographystyle{alpha}
\def\MR#1{}
\bibliography{bib}
\end{document}